\documentclass[10pt]{amsart}
\usepackage{graphicx}
\usepackage{amssymb,amsmath,amsxtra,latexsym, MnSymbol,psfrag}
\usepackage{xcolor}
\usepackage{cite}
\usepackage[curve]{xypic}
\setlength{\topmargin}{0cm} \setlength{\headsep}{20pt}
\addtolength{\hoffset}{-1.5cm}
\textwidth=15cm \textheight=23cm

\def\dint{\displaystyle\int}
\def\dsum{\displaystyle\sum}
\def\dprod{\displaystyle\prod}

\def\ub{\textbf u}

\newtheorem{thm}{Theorem}
\newtheorem{thms}{Theorem}[section]
\newtheorem{lemma}{Lemma}[section]
\newtheorem{Def}{Definition}[section]
\newtheorem{prop}{Proposition}[section]
\newtheorem{cor}{Corrolary}[section]
\newtheorem{remark}{Remark}[section]

\newcommand{\ualpha}{\underline{\alpha}}
\newcommand{\ubeta}{\underline{\beta}}

\newcommand{\ev}{\mathrm{ev}}

\newcommand{\KKeff}{\mathbb{K}^{\text{\rm eff}}}

\newcommand{\Mbar}{\overline{\mathcal{M}}}

\DeclareMathOperator{\Res}{Res}

\newcommand{\vir}{\text{\rm vir}}

\newcommand{\CR}{{\operatorname{CR}}}




\newcommand{\PP}{\mathbb{P}}
\newcommand{\QQ}{\mathbb{Q}}
\newcommand{\bT}{\mathbb{T}}
\newcommand{\RR}{\mathbb{R}}

\newcommand{\KK}{\mathbb{K}}
\newcommand{\TT}{\mathbb{T}}
\newcommand{\LL}{\mathbb{L}}
\newcommand{\ZZ}{\mathbb{Z}}

\newcommand{\CC}{\mathbb{C}}
\newcommand{\EE}{\mathbb{E}}


\newcommand{\tx}{\tilde{x}}

\newcommand{\tX}{\widetilde{X}}

\renewcommand{\ev}{\operatorname{ev}}

\newcommand{\rank}{\operatorname{rank}}

\newcommand{\age}{\operatorname{age}}

\newcommand{\cA}{\mathcal{A}}

\newcommand{\calD}{\mathcal{D}} 

\newcommand{\cF}{\mathcal{F}} 
\newcommand{\cI}{\mathcal{I}}
\newcommand{\cQ}{\mathcal{Q}}

\newcommand{\cS}{\mathcal{S}}

\newcommand{\cX}{\mathcal{X}}

\renewcommand{\cL}{\mathcal{L}}

\newcommand{\bt}{\mathbf{t}}
\newcommand{\bN}{\mathbf{N}}

\newcommand{\bE}{\mathbf{E}}

\newcommand{\w}{{\mathbf{w}}}

\newcommand{\bu}{\mathbf{u}}


\newcommand{\fh}{\mathfrak{h}}

\begin{document}

\title{Equivariant mirror symmetry for footballs}
\date{\today}
\author{Zhuoming Lan}
\address{Zhuoming Lan, Department of Mathematical Sciences, Tsinghua University, Haidian District, Beijing 100084, China}
\email{lanzm21@mails.tsinghua.edu.cn}
\maketitle
\begin{abstract}

In this paper, we establish equivariant mirror symmetry for footballs $\cF(m,r)$. This extends the results by B. Fang, C.C. Liu and Z. Zong, where the projective line was considered [{\it Geometry \& Topology} 24:2049-2092, 2017], and the results by D. Tang of weighted projective lines, on [arXiv:1712.04836].  More precisely, we prove the equivalence of the $R$-matrices for A-model and B-model at large radius limit, and establish isomorphism for $R$-matrices for general radius. We further demonstrate that the graph sum of higher genus cases are the same for both models, hence establish equivariant mirror symmetry for footballs. In last two sections the large radius limit and equivariant limit are considered, resulting a generealized Bouchard-Mari\~{n}o conjecture and Norbury-Scott conjecture respectively.  
\end{abstract}

\tableofcontents

\section{Introduction}

In mirror symmetry for toric varieties, one aims at establishing equivalence between A-model invariants and the LG B-model invariants for a given toric variety.

On the A-model side, there are extensive studies for the equivariant Gromov-Witten theory of toric varieties. In \cite{Given1}, A.B. Givental computed all genus descendent invariants of equivariant Gromov-Witten theory of tori action with isolated fixed points for the smooth case. The process of recovering higher genus data is now known as Givental's formula. In \cite{CCIT15}, Coates, Corti, Iritani, and Tseng estabished the mirror theorem for all toric stacks,  which enables the calculation of $J$-function via the mirror map. In \cite{Zong}, Z. Zong gave all genus equivariant Gromov-Witten invariants for GKM orbifolds by generalizing Givental's formula.

On the B-model side, B. Eynard and N. Orantin discovered a way to compute the topological expansion of matrix integrals in \cite{Eynard}. Eynard-Orantin's topological recursion is related to Givental's formula \cite{DOSS}.

B. Fang, C.C. Liu and Z. Zong established the equivariant mirror symmetry for the projective line. They directly computed the $R$-matrices of both A and B-models, and applied Givental's formula and Eynard-Orantin's recursion, thereby proved the equivariant mirror symmetry for the projective line by calculating graph sums  \cite{FLZ17}.

In this paper, we extend the equivariant mirror symmetry to the weighted projective line.
First, we associate the equivariant Gromov-Witten invariants of the weighted projective line to the Eynard-Orantin invariants of the affine curve determined by the superpotential of its $T$-equivariant Landau-Ginzburg mirror. It is proved by calculating the graph sum and applying the main results in \cite{Eynard}\cite{Given}\cite{Given1}. We use the equivalence of $R$-matrices in both models, which is established by computations with quantum Riemann-Roch, and integration on Lefschetz thimble at the large radius limit \cite{Tseng10}.

Secondly, we establish a precise correspondence between A-model genus $g, n$ point descendent equivariant Gromov-Witten invariants, and the Laplace transform of B-model Eynard-Orantin invariant along Lefschetz thimbles.  This extends the result in \cite{Fang20}.

Thirdly, we consider the large radius limit of the mirror symmetry. On the A-model side $\cF(m,r)$ reduces to $[\CC/\mu_{r}]$, and the $F_{g,n}$ corresponds to the orbifold Hurwitz numbers defined in \cite{JPT11}. This established a correspondence between $r$-Hurwitz numbers with B-model topological recursion. When further letting $q_{i}\to 0$, the $r$-Bouchard-Mari{\~n}o conjecture in \cite{BSLM14} is recovered.

Fourthly, we consider the non-equivariant limit of the mirror symmetry, with $q_{i}\to0$. Similar to the method in \cite{FLZ17}, we can explicitly calculate the expansion of $\omega_{g,n}$, resulting a generalized version of Norbury-Scott conjecture.

\color{black}
\subsection*{Acknowledgments}
 The author would like to thank Dun Tang for the fundamental work on weighted projective lines, which is very much similar to footballs. The author want to express his great gratitude to Professor Chiu-Chu Mellisa Liu, who provided a lot of helpful idea in the whole process. The  author would also like to thank Professor Bohan Fang and Professor Zhengyu Zong for helpful discussions. The author is financially supported by the Tsinghua Scholarship for Overseas Graduate Studies. 

\section{The Frobenius Manifold}

\subsection{Geometry of $\mathcal{F}(m,r)$ as a toric orbifold}\label{sec:fan}

A football $\mathcal{F}(m,r)$ is a toric orbifold defined by the following stacky fan \cite{BCS05}: 
\begin{figure}[h]
\begin{center}
\setlength{\unitlength}{2mm}
\begin{picture}(20,5)
\put(12,4){\vector(-1,0){10}}
\put(12,4){\vector(1,0){6}}
\put(12,3){${}^|$}
\put(1,1){$-m$}
\put(17,1){$r$}
\end{picture}
  \label{fig0}
\end{center}
\end{figure}

i.e., $\mathcal{F}(m,r)=(\mathbb{C}^2\setminus\{(0,0)\})/G_{m,r}$, where $G_{m,r}$ is defined by $$G_{m,r}=\{(t_1,t_2)|(t_1,t_2)\in(\mathbb{C}^{*})^2,t_1^{r} t_2^{-m}=1\}.$$ From this construction we have a $T=(\mathbb{C}^\ast)^2$ action on $\mathcal{F}(m,r)$ by $(t_1,t_2)\cdot[z_1,z_2]=[t_1z_1,t_2z_2]$.

The football $\cF(m,r)$ can also be described
in terms of an extended stacky fan \cite{Jiang08}.

\begin{Def}
 An \emph{$S$-extended stacky fan} 
is a quadruple $\mathbf{\Sigma}= (\bN,\Sigma,\beta,S)$, where:
\begin{itemize} 
\item $\bN$ is a finitely generated abelian group\footnote{Note that $\bN$ may have torsion.}; 
\item $\Sigma$ is a rational simplicial fan in $\bN\otimes \RR$;   
\item $\beta\colon \ZZ^M \to \bN$ is a homomorphism; 
we write $b_i = \beta(\tilde{b}_i)\in \bN$ for the image of the $i$th standard 
basis vector $\tilde{b}_i\in\ZZ^M$,
and write $\overline{b}_i$ for the image of $b_i$ in $\bN\otimes \RR$; 
\item $S \subset I$ is a subset, where $I$ is an index set with cardinality $M$.
\end{itemize} 
such that:
\begin{itemize} 
\item each one-dimensional cone of $\Sigma$
is spanned by $\overline{b}_i$ for a unique 
$i\in I\setminus S$, and
each $\overline{b}_i$ with 
$i\in I\setminus S$ spans a one-dimensional 
cone of $\Sigma$; 

\item for $i\in S$, $\overline{b}_i$ lies in the support $|\Sigma|$ 
of the fan. 
\end{itemize}    
\end{Def}

$\mathcal{F}(m,r)$ can be also constructed through the following stacky fan from the following data:
\begin{itemize}
  \item an $(m+r)$-dimensional algebraic torus $K\cong (\mathbb{C^\ast})^{m+r}$. Set $\LL=\mathrm{Hom}(\mathbb{C}^\ast,K)$;
  \item $M=m+r+1$ elements $D_{-m}, D_{-m+1},\ldots, D_r\in \mathbb{L}^{\vee} =\mathrm{Hom}(K,\mathbb{C}^\ast)$, such that $\mathbb{L}^{\vee}\otimes\mathbb{R}=\dsum\mathbb{R}\cdot D_i$;
  \item a vector $\omega\in \mathbb{L}^{\vee}\otimes\mathbb{R}$ (that defines the stability condition).
\end{itemize}
In $\cF(m,r)$ case we first consider the exact sequence:
\begin{equation}\label{eq:exact}
  \xymatrix{
    0 \ar[r] &
    \LL \ar[r]^{i} &
    \ZZ^{m+r+1} \ar[r]^\beta & 
    \bN \ar[r] & 
    0 }
\end{equation}

and the corresponding exact sequence of algebraic tori:
\begin{equation}\label{eq:exact-tori}
  \xymatrix{
    0 \ar[r] &
    K \ar[r] &
   \mathbb{T} \ar[r] & 
    \cQ \ar[r] & 
    0 }
\end{equation}
where  $\LL\simeq\ZZ^{m+r}$ and $ \bN=\ZZ$. 
Writing the $\ZZ$-basis of  $\LL^{\vee}$ to be 
$\{e_{-m},e_{-m+1},\dots,e_{-2},e_0,e_1,\dots,e_{r}\}$. 
The map $i=(D_{-m},D_{-m+1},\ldots,D_r)$ with $D_i=e_i$ for $i\in 
[-m,r]\setminus\{-1\}$ and $D_{-1}=\sum_{i\in [-m,r]\setminus\{-1\}}ie_{i}$ where 
$[-m,r]:=\{ -m, -m+1,\ldots, r\}$, and $\beta=(b_{-m},b_{-m+1},\dots,b_{r})$ with $b_i=i\in\bN=\ZZ$.

And $K=\LL\otimes\CC^{*}\simeq(\CC^{*})^{m+r}$, $\TT=(\CC^{*})^{m+r+1}$, $\cQ:=\TT/K\simeq\CC^{*}$

The extended stacky fan
$\mathbf{\Sigma}_\omega=(\bN, \Sigma_\omega, \beta,S)$
corresponding to $\mathcal{F}(m,r)$ 
consists of the group $\bN$ and the map $\beta$ defined 
above, together with a fan $\Sigma_\omega$ in $\bN\otimes\RR$ 
and $S$ given by\footnote{This is why we refer to the 
elements of $\cA_\omega$ as anticones.}: 
\begin{align*} 
\Sigma_\omega & = \{\sigma_{I} : \overline{I} \in \cA_\omega\}, \\ 
S & = \{ i \in \{1,\dots,m\} : \overline{\{i\}}  
\notin \cA_\omega\}. 
\end{align*} 
The definition of $\cA_\omega$ follows  \cite[Definition 4.2]{CIJ}. In our case, $\cA_\omega=\{\{-m+1,-m+2,\dots,r\},\{-m,-m+1,\dots,r-1\},\{-m,-m+1,\dots,r\}\}$ the subset 
$S= \{-m+1,-m+2,\dots,r-1\}$ .

With these data we have $\cX_{\omega}=\cF(m,r).$

\subsection{Equivariant cohomology 
of $\mathcal{F}(m,r)$}

Let $\bullet$ denote a point. The surjective group homomorphism $\TT\to \cQ$ induces an injective ring homomorphism
\begin{eqnarray*}
H^*_{\cQ}(\bullet;\CC) = \CC[p] &\longrightarrow&  
H^*_{\TT}(\bullet;\CC) =\mathbb{C}[\w_{-m},\cdots,\w_{-1},\w_{0},\w_1,\cdots,\w_{r}] =: \CC[\w]\\
p &\longmapsto & \mathbf{p} 
= \sum_{\ell=1}^{r} \ell \bold{w}_\ell - \sum_{\ell=1}^m \ell \bold{w}_{-\ell} =
\bold{p}_+ -\bold{p}_-
\end{eqnarray*}
where $\mathbf{p}_{+} =\sum_{\ell=1}^{r} \ell \bold{w}_\ell$ and 
$\bold{p}_{-}=\sum_{\ell=1}^m \ell \bold{w}_{-\ell}$.

The $\cQ$-equivariant and $\TT$-equivariant cohomology rings
of $\cF(m,r)$ are given by (see e.g. \cite[Section 4.3]{CIJ}): 
\begin{align*}
    H^{*}_{\cQ}(\cF(m,r),\CC)&=\CC[u_1,u_2, p]/\langle u_1 u_2, p-(ru_1-mu_2) \rangle ,\\
    H^{*}_{\TT}(\cF(m,r),\CC)&=
    H^*_{\cQ}(\cF(m,r);\CC)
    \otimes_{\CC[p]} \CC[\w] =
    \CC[u_1, u_2, \mathbf{w}]/\langle u_1 u_2, \bold{p}-(ru_1-mu_2) \rangle.
\end{align*}

The $\cQ$-equivariant and $\TT$-equivariant
Chen-Ruan orbifold
cohomology rings of 
$\cF(m,r)$ are given by (see e.g. \cite[Section 8.8]{Liu13}):
\begin{align*}
    H^{*}_{\CR,\cQ}(\cF(m,r);\CC)&=\CC[y_1,y_2, p]/\langle y_1 y_2, p-(ry_1^{r}-mu_2^{m}) \rangle, \\
    H^{*}_{\CR,\TT}(\cF(m,r),\CC)&=
    H^*_{\CR,\cQ}(\cF(m,r);\CC)\otimes_{\CC[p]}\CC[\w]=
    \CC[y_1, y_2, \mathbf{w}]/\langle y_1 y_2, \bold{p}-(ry_1^{r}-my_2^{m}) \rangle.
\end{align*}


\subsection{The $\TT$-equivariant superpotential and its Jacobian ring}

For $y\in\mathbb{C}$, let $Y=e^y\in \mathbb{C}^\ast$.
Define the $\TT$-equivariant superpotential $W_{\TT}:\mathbb{C}^\ast\to \mathbb{C}$ by
\[
W_{\mathbb{T}}(Y)= Y^{r}+ \dsum_{\ell=1}^{r-1}\tilde{q}_\ell Y^\ell + \tilde{q}_0+\dsum_{\ell=1}^{m}\tilde{q}_{-\ell} Y^{-\ell} - \tilde{w}_{r}\log (Y^{r}) -\dsum_{\ell=1}^{r-1}\tilde{w}_\ell\log(\tilde{q}_\ell Y^\ell) - \dsum_{\ell=0}^{m} \tilde{w}_{-\ell}\log(\tilde{q}_{-\ell}  Y^{-\ell}).
\]

Let $x=W_\TT(e^y)$. The Jacobian ring of $W_{\mathbb{T}}$ is
\[
\mathrm{Jac}(W_\mathbb{T})= \mathbb{C}[w][Y,Y^{-1}]/\left<\dfrac{\partial W_\mathbb{T}}{\partial y}\right> = \mathbb{C}[w][Y, Y^{-1}]/\left<rY^{r} +  \dsum_{\ell=1}^{r-1}\ell \tilde{q}_\ell Y^\ell - \dsum_{\ell=1}^{m} \ell \tilde{q}_{-\ell} Y^{-\ell} -\tilde{p}\right>,
\]
where $\tilde{p}=\sum_{\ell=1}^{r} \ell \tilde{w}_\ell - \sum_{\ell=1}^m \ell \tilde{w}_{-\ell}$.

Let $\{P_\alpha\}$ be the set of critical points of $W_{\mathbb{T}}$. Define residual pairing $(f,g)$ on $\mathrm{Jac}(W_\TT)$ by 
\[
(f,g)= \dsum_{\alpha} \mathrm{Res}_{P_\alpha} \dfrac{f(Y)g(Y)}{\big({\partial W_\TT}/{\partial y}\big)} \dfrac{dY}Y.
\]


\subsection{Equivariant Gromov-Witten invariants}
Given $g, N, d\in \ZZ_{>0}$, let $\Mbar_{g,N}(\cF(m,r),d)$
be the moduli stack of genus $g$, $N$-pointed, degree $d$ twisted stable maps to $\cF(m,r)$ \cite{AGV02, AGV08}. (Note that $\Mbar_{g,N}(\cF(m,r),d)$ is empty if  $d=0$ and $2g-2+N<0$.) Let $\mathrm{ev}_j: \Mbar_{g,n}(\cF(m,r),d) \to \cI \cF(m,r)$ be the evaluation at the $j$-th
marked point, where $\cI\cF(m,r)$ is the inertia stack of $\cF(m,r)$.
The $\TT$-action on $\cF(m,r)$ induces
$\TT$-actions on $\Mbar_{g,N}(\cF(m,r),d)$ and on $\cI\cF(m,r)$, and
the evaluation maps $\ev_j$ are $\TT$-equivariant, so we have pullback maps
$$
\ev^*_j: H^*_{\CR,\TT}(\cF(m,r);\CC)
\cong H^*_{\TT}(\cI \cF(m,r);\CC)
\longrightarrow H^*(\Mbar_{g,N}(\cF(m,r),d);\CC)
$$
which are morphism of $\CC[\w]$-modules. 
Here $\cong$ is an isomorphism of 
$\CC[\w]$-modules but not a ring isomorphism. 

Let $\Mbar_{g,N}(\PP^1,d)$ be the moduli stack of genus $g$, $N$-pointed,
degree $d$ stable maps to the projective line $\PP^1$, the coarse moduli of the football $\cF(m,r)$ for any $m,r\in \ZZ_{>0}$. Let $\pi:\Mbar_{g,N+1}(\PP^1,d) \to \Mbar_{g,N}(\PP^1,d)$ be the universal curve, and let 
$\omega_\pi$ be the relative dualizing sheaf. For $j=1,\ldots, N$, let
$s_j:\Mbar_{g,N}(\PP^1,d)\to \Mbar_{g,N+1}(\PP^1,d)$ be the section associated to the $j$-th marked point. Then $\TT$ acts on $\PP^1$ and $\Mbar_{g,N}(\PP^1,d)$, and $\pi$ and $s_j$ are $\TT$-equivariant. 
For $j=1,\ldots, N$, let
$$
\LL_j = s_j^*\omega_\pi
$$
be the $j$-th tautological bundle, whose fiber over a moduli point $[f: (\Sigma,x_1,\cdots,x_N)\to \PP^1]$ in $\Mbar_{g,N}(\PP^1,d)$ is the cotangent line $T_{x_j}^\ast \Sigma$ of the domain $\Sigma$  at the $j$-th marked point $x_j$, and let
$$
\psi^{\TT}_j :=(c_1)_\TT(\LL_j) \in H^2_{\TT}(\Mbar_{g,N}(\PP^1,d);\QQ)
\subset H^2_\TT(\Mbar_{g,N}(\PP^1,d);\CC). 
$$ 
There is a $\TT$-equivariant map $p:\Mbar_{g,n}(\cF(m,r),d)\to \Mbar_{g,N}(\PP^1,d)$. Following \cite[Section 2.5.1]{Tseng10}, we define
$\TT$-equivariant descendant classes
$$
\hat{\psi}_j^{\TT}:= p^*\psi_i^{\TT} \in H^2_{\TT}(\Mbar_{g,N}(\cF(m,r),d);\QQ)
\subset H^2_{\TT}(\Mbar_{g,N}(\cF(m,r),d);\CC), \quad j= 1,\ldots,N. 
$$
The non-equivariant descendant class $\hat{\psi}_j \in 
H^2(\Mbar_{g,n}(\cF(m,r),d);\QQ)$ is denoted
$\bar{\psi}_j$ in \cite{Tseng10, CCIT15} and $\psi_j$ in \cite{Zong}.

Given  $\gamma_1,\cdots,\gamma_N \in
H_{\CR,\TT}^\ast(\mathcal{F}(m,r);\CC)$  and $a_1,\cdots, a_n\in \ZZ_{\geq 0}$, define the $\TT$-equivariant orbifold descendent Gromov-Witten invariants as 
\[
\left< \tau_{a_1}(\gamma_1)\cdots \tau_{a_n}(\gamma_N) \right>_{g,N,d}^{\mathcal{F}(m,r),\TT} = \dint_{[\Mbar_{g,N}(\cF(m,r),d)]^w} \dprod_{j=1}^N (\hat{\psi}^\TT)_j^{a_j} \mathrm{ev}_j^\ast(\gamma_j) \in \CC[\bold{w}].
\]
where $[\Mbar_{g,N}(\cF(m,r),d]^w$ is the weighted virtual fundamental class (see \cite[Section 4.6]{AGV02} and
\cite[Section 2.5.1]{Tseng10}).

\subsection{Generating functions}
For $2g-2+ M + N>0$ and given $\gamma_1,\cdots,\gamma_{N+M}\in H_{\CR,\TT}^\ast(\cF(m,r))$, define
\[
\begin{split}
& \left<  \dfrac{\gamma_1}{z_1-\hat{\psi}_1},\cdots,\dfrac{\gamma_N}{z_N-\hat{\psi}_N},\gamma_{N+1},\cdots,\gamma_{N+M} \right>_{g,M+N,d}^{\cF(m,r),\TT} \\
=& \dsum_{a_1,\cdots,a_N\geq 0} \left< \tau_{a_1}(\gamma_1)\cdots\tau_{a_N}(\gamma_N)\tau_0(\gamma_{N+1})\cdots\tau_0(\gamma_{N+M}) \right>_{g,M+N,d}^{\mathcal{F}(m,r),\TT} \dprod_{j=1}^N z_j^{-a_j-1}.
\end{split}
\]
This is actually the formal expansion of $\frac{\gamma_i}{z_i-\hat{\psi}_i}$ at $z_i^{-1}=0$.
In the unstable case $g=0,d=0,M+N=1 \;\mathrm{ or } \;2$, we define
\[
\begin{split}
\left< \frac{\gamma_1}{z_1-\hat{\psi}_1}\right>_{0,1,0}^{\mathcal{F}(m,r),\TT} & = z_1\int_{\mathcal{F}(m,r)} \gamma_1,\\
\left< \frac{\gamma_1}{z_1-\hat{\psi}_1},\gamma_2\right>_{0,2,0}^{\cF(m,r),\TT} & = \int_{\cF(m,r)} \gamma_1\cup\gamma_2,\\
\left< \frac{\gamma_1}{z_1- \hat{\psi}_1}, \dfrac{\gamma_2}{z_2-\hat{\psi}_2}\right>_{0,2,0}^{\cF(m,r),\TT} & = \dfrac1{z_1+z_2}\int_{\cF(m,r)} \gamma_1\cup\gamma_2.
\end{split}
\]
Let $\mathbf{t}=tH+\sum_{i=-m+1}^{i=r-1} t^i\mathbf{1}_{i}$, where $\{\mathbf{1}_i,H\}$ form a basis of $H_\TT^\ast(\mathcal{F}(m,r),\mathbb{C})$ as a $\CC[\mathbf{w}]$-module.
Suppose that $2g-2+N+M>0$ or $N>0$. Given $\gamma_1,\cdots,\gamma_{M+N}\in H_{\CR,\TT} ^\ast (\mathcal{F}(m,r))$, we define
\[
\begin{split}
& \left<\!\left< \dfrac{\gamma_1}{z_1-\hat{\psi}_1}, \cdots,\dfrac{\gamma_N}{z_N-\hat{\psi}_N},\gamma_{N+1},\cdots,\gamma_{N+M}\right>\!\right>_{g,N+M}^{\mathcal{F}(m,r),\TT} \\  = &  \dsum_{d\geq 0}\dsum_{\ell\geq 0} \dfrac{Q^d}{\ell!}  \left< \dfrac{\gamma_1}{z_1-\hat{\psi}_1}, \cdots,\dfrac{\gamma_N}{z_N-\hat{\psi}_N},\gamma_{N+1},\cdots,\gamma_{N+M},\underbrace{\mathbf{t},\cdots,\mathbf{t}}_{\ell\;\mathrm{ times }}\right>_{g,M+N+\ell,d}^{\mathcal{F}(m,r),\TT}.
\end{split}
\]

Let
\[
\begin{split}
\mathbf{u_j}&=\dsum_{a\geq 0} (u_j)_a z^a,\\
F_{g,N}^{\mathcal{F}(m,r),\TT}(\bu_1,\cdots ,\bu_N,\bt)&=\dsum_{a_1,\cdots,a_N\geq 0}\left<\!\left<\tau_{a_1}((u_1)_{a_1}),\cdots,\tau_{a_N}((u_N)_{a_N})\right>\!\right>_{g,N}^{\mathcal{F}(m,r),\TT}.
\end{split}
\]

For fixed $M,N\in\mathbb{Z}_{\geq 0}$, consider $\pi:\Mbar_{g,N+M}(\mathcal{F}(m,r),d)\to \Mbar_{g,N}$ which forgets the map to $\cI \cF(m,r)$ and the last $M$ marked points and stabilizes it.
Let $\bar{\mathbb{L}}_i$ be the pull-back of $\mathbb{L}_i$ along $\pi$.
Let $\bar{\psi}_i=\pi^\ast(\psi_i)=c_1(\bar{\mathbb{L}}_i)$ be the pull-back of $\psi$-classes on $\Mbar_{g,N}$. Define
\begin{equation*}
\bar{F}_{g,N}^{\cF(m,r),\TT}(\bu_1,\cdots, \bu_N,\bt) =  \dsum_{M,d,a_1,\cdots,a_N\geq 0} \dfrac{Q^d}{M!N!}\cdot\dint_{[\Mbar_{g,M+N}(\mathcal{F}(m,r),d)]^{w}} \dprod_{j=1}^N \mathrm{ev}_j^\ast((u_j)_{a_j})·(\bar{\psi}_j^{\bT})^{a_j} \dprod_{i=1}^M \mathrm{ev}_{i+N}^\ast(\mathbf{t}).
\end{equation*}
Let $F_{g,N}^{\mathcal{F}(m,r),\TT}(\mathbf{u,t}),\bar{F}_{g,N}^{\mathcal{F}(m,r),\TT}(\mathbf{u,t})$ be such that all $\mathbf{u_j}=\mathbf{u}$. We let $Q=1$ in the following context.

We define the total descendent potential and the ancestor potential of $\mathcal{F}(m,r)$ as follows, where $\hbar$ is a formal parameter:
\[
\begin{split}
&D^{\mathcal{F}(m,r),T}(\mathbf{u})=\exp\big( \sum_{N,g}\hbar^{g-1}F_{g,N}^{\mathcal{F}(m,r),T}(\mathbf{u,0})\big),\\
&A^{\mathcal{F}(m,r),\TT}(\mathbf{u,t})=\exp\big(\sum _{N,g}\hbar^{g-1}\bar{F}_{g,N}^{\mathcal{F}(m,r),\TT}(\mathbf{u,t}) \big).
\end{split}
\]

In fact, by Givental's formula \cite{Given}, we have
\[D^{\cF(m,r),\TT}(\mathbf{u})=\exp(F_1^{\mathcal{F}(m,r),\TT}) \hat{\mathcal{S}}^{-1} A^{\mathcal{F}(m,r),\TT}(\mathbf{u,t}),\] where $F_1^{\mathcal{F}(m,r),\TT}=\sum_N F_{1,N}^{\mathcal{F}(m,r),\TT}$.
We shall give an equivalent graph sum formula in Section \ref{A-model graph sum}.

\subsection{Mirror theorem of $\cF(m,r)$ as a toric stack}
In this section we follow the definition of I-function in \cite{Fang20}. \color{black}

Let $I_0=\bigcap_{I'\in\cA} I$ and let  $\cA'=\left\{ I'-I_0\mid  I'\in\mathcal{A}\right\}$. In our case, $I_0=\{-m+1,-m+2,\dots,r-1\}$ and $\cA'=\{\{-m\},\{r\},\{-m,r\}\}.$
Let $\overline{D}_i$ be the image of $D_i$ in 
$$
\LL^{\vee}/\sum_{i\in I_0}\ZZ D_i\cong H^2(\cF(m,r),\ZZ).
$$
Then $\overline{D}_i=0$ for $i\in I_0$.  
Let  $\{e_a\}_{a\in [-m,r]\setminus\{-1\}}$  be the integral basis of $\mathbb{L}^{\vee}$ defined in 
Section \ref{sec:fan}, 
and let $\bar{e}_a$ be the image of $e_a$ in $H^2(\cF(m,r);\ZZ)$. Recall that $D_a = e_a$ for $a\in [-m,r]\setminus \{-1\}$,
and $D_{-1}= \sum_{i\in [-m,r]\setminus\{-1\}} ie_i$, so 
$H^2(\cX;\ZZ)$ 
is generated by $\overline{D}_{-m}=\bar{e}_{-m}$ and $\overline{D}_{r}=\bar{e}_{r}$ with the relation 
$-m \bar{e}_{-m}+r\bar{e}_{r}=\overline{D}_{-1}=0$,
 where $\bar{e}_{-m}$ corresponds to $y_2^m=\frac{1}{m}H$ and $\bar{e}_{r}$ corresponds to $y_1^{r}=\frac{1}{r}H.$

Let $\mathbb{K}=\left\{ d\in\mathbb{L}\otimes\mathbb{Q} : \{i:\left<D_i,d\right>\in\mathbb{Z}\}\in\mathcal{A} \right\}$ and
$\KKeff=\left\{ d\in\mathbb{L}\otimes\mathbb{Q} : \{i:\left<D_i,d\right>\in\mathbb{Z}_{\geq 0} \}\in\mathcal{A} \right\}$.  In our case, $\mathbb{K}=\mathbb{K}_{-m}\cup\mathbb{K}_{r}$ with
$$\mathbb{K}_{-m}=\left\{ d=(c_{-m},c_{-m+1},\dots,c_{-2},c_0,\dots,c_{r})\in\mathbb{L}\otimes\mathbb{Q} :c_{-m}\in\frac{1}{m}\ZZ, and \ c_{i}\in\ZZ \ for \ i\neq-m \right\}$$
$$\mathbb{K}_{r}=\left\{ d=(c_{-m},c_{-m+1},\dots,c_{-2},c_0,\dots,c_{r})\in\mathbb{L}\otimes\mathbb{Q} :\ c_{r}\in\frac{1}{r}\ZZ, and \ c_{i}\in\ZZ \ for \ i\neq r \right\}$$
And $\KKeff=\KKeff_{-m}\cup\KKeff_{r}$ with
$$\KKeff_{-m}=\left\{ d=(c_{-m},c_{-m+1},\dots,c_{-2},c_0,\dots,c_{r})\in \mathbb{K}_{-m}: \ c_{i}\in\ZZ_{\geq0} \ for \ d\neq-m , \ \langle D_{-1},d\rangle\geq0\right\}$$
$$\KKeff_{r}=\left\{ d=(c_{-m},c_{-m+1},\dots,c_{-2},c_0,\dots,c_{r})\in \mathbb{K}_{r}:\ c_{i}\in\ZZ_{\geq0} \ for \ i\neq r, \ \langle D_{-1},d\rangle\geq0\right\}$$

We choose a rational basis $\{p_{a}\}_{a\in I_0\cup\{*\}}$ of $\LL^{\vee}\otimes\QQ$ such that
$$\bigoplus_{a}\RR_{\geq0}p_{a}=(\bigoplus_{i\in I\setminus\{-m\}}\RR_{\geq0}D_i)\bigcap(\bigoplus_{i\in I\setminus\{r\}}\RR_{\geq0}D_i).$$

In our case, let $p_{a}=D_{a}$ for $a\in I_{0}$ and $p_{*}=\dsum_{i=1}^{r}iD_{i}=\dsum_{i=1}^{m}iD_{-i}$. Then we can write $D_{i}$ in terms of $p_{a}$:
\begin{align} 
D_{-m}&=\frac{1}{m}p_{*}-\dsum_{i=1}^{m-1}\frac{i}{m}p_{-i}\\
D_{i}&=p_{i} \ for \ i\in I_{0}\\
D_{r}&=\frac{1}{r}p_{*}-\dsum_{i=1}^{r-1}\frac{i}{r}p_{i}    
\end{align}

Let $\hat{\rho}=\dsum_{i\in I}D_{i}=\dsum_{i=0}^{m-1}(1-\frac{i}{m})p_{-i}+\dsum_{i=1}^{r-1}(1-\frac{i}{r})p_{i}+(\frac{1}{m}+\frac{1}{r})p_{*}.$

Let the map $v:\mathbb{K}\to\mathbb{K}/\LL\simeq\mathrm{Box}(\Sigma_{\omega})$ defined by:
$v(d)=\dsum_{i=-m}^{r}\{-\langle D_{i},d\rangle\}b_{i}=-m\{-d_{-m}\}+r\{-d_{r}\}.$ In our case, $v(d)=\{-\langle D_{-m},d\rangle\}b_{-m}+\{-\langle D_{r},d\rangle\}b_{r}$. We also write $v(d)=-m\{\langle-\hat\rho,d\rangle\}$ for $d\in\KK_{-m}$ and $v(d)=r\{\langle-\hat\rho,d\rangle\}$
for $d\in\KK_{r}$.

Let $q_a$ be coordinates on $\mathrm{Hom}(\mathbb{L},\mathbb{C}^*)$ with respect to the basis of $\{p_a \}_{I_{0}\cup\{*\}}$. Let $\deg q_{a}=\langle\hat{\rho},p_{a}\rangle$. In our case, $\deg q_{-l}=1-\frac{l}{m}$ for $0\leq l\leq m-1$, $\deg q_{l}=1-\frac{l}{r}$ for $0\leq l\leq r-1$, and $\deg q_{*}=\frac{m+r}{mr}$.

For convenience, we specially define $\tilde{d}_{-m}=\langle D_{-m},d\rangle=\frac{1}{m}(d_{*}-\dsum_{i=1}^{m-1}id_{-i})$, and $\tilde{d}_{r}=\langle D_{r},d\rangle=\frac{1}{r}(d_{*}-\dsum_{i=1}^{r-1}id_{i})$.

We rewrite the constraints of $d\in \KKeff$ in terms of $\{d_{a}\}$. $\langle D_{i},d\rangle\in\ZZ_{\geq0}$ for $i\in I_{0}$ is equivalent to $d_{i}\in\ZZ_{\geq 0}$. For $d\in\KKeff_{-m}$, $\langle D_{-m},d\rangle\in\ZZ_{\geq0}$ is equivalent to $\tilde{d}_{-m}\in\ZZ_{\geq 0}$. For $d\in\KKeff_{r}$, $\langle D_{r},d\rangle\in\ZZ_{\geq0}$ is equivalent to $\tilde{d}_{r}\in\ZZ_{\geq 0}$. Note that $d_{*}=m\tilde{d}_{-m}+\dsum_{i=1}^{m-1}id_{-i}=r\tilde{d}_{r}+\dsum_{i=1}^{r-1}id_{i}$
\begin{Def} \cite{Iritani} 
The $I$-function of $\mathcal{X}$ is an $H_{CR}^\ast (\mathcal{X})$-valued power series defined by
\[
I(q,z)=e^{(\sum_\alpha \bar{p}_a \log q_a)/z} \sum_{d\in\KKeff} q^d \cdot \dfrac{\prod_{\nu,i:\left<D_i,d\right>\leq \nu<0}(\overline{D}_i+(\left<D_i,d\right>-\nu)z)}{\prod_{\nu,i:\left<D_i,d\right>> \nu\geq 0}(\overline{D}_i+(\left<D_i,d\right>-\nu)z)}\mathbf{1}_{v(d)},
\]
where $q^d=\prod_a q_a^{d_a}$.
\end{Def}

Now we calculate the $I$-function of $\cF(m,r)$  by calculating the coefficient of $q^d=\prod_a q_a^{d_a}$ term with direct calculation:
\begin{equation}
    I(q,z)[q^{d}]=\frac{e^{(H\log q_{*})/z}}{z^{\sum_{i\in I}\lceil \langle D_{i},d\rangle\rceil}\cdot\dprod_{i\in I_{0}}d_{i}!}\cdot\frac{\Gamma(\frac{H}{mz}+1-{\{-\tilde{d}_{-m}}\})}{\Gamma(\frac{H}{mz}+\tilde{d}_{-m}+1)}\cdot\frac{\Gamma(\frac{H}{rz}+1-   {\{-\tilde{d}_{r}}\})}{\Gamma(\frac{H}{rz}+\tilde{d}_{r}+1)}\mathbf{1}_{v(d)}
\end{equation}
for $d\in\KKeff$. The $\Gamma$-function here is defined formally. 
\begin{Def} \cite{Fang20} 
The $\mathbb{T}$-equivariant $I$-function of $\mathcal{X}$ is an $H_{\mathbb{T},CR}^\ast (\mathcal{X})$-valued power series defined by
\[
I^{\mathbb{T}}(q,z)=e^{(\sum_\alpha \bar{p}_{a}^{\mathbb{T}} \log q_a)/z} \sum_{d\in\KKeff} q^d \cdot \dfrac{\prod_{\nu,i:\left<D_i,d\right>\leq \nu<0}(\overline{D}_i^{\mathbb{T}}+(\left<D_i,d\right>-\nu)z)}{\prod_{\nu,i:\left<D_i,d\right>> \nu\geq 0}(\overline{D}_i^{\mathbb{T}}+(\left<D_i,d\right>-\nu)z)}\mathbf{1}_{v(d)},
\]
where $q^d=\prod_a q_a^{d_a}$.
\end{Def}

In our case, $\overline{D}_{i}^{\mathbb{T}}=0$ for $i\neq-m,r$. $\overline{D}_{-m}^{\mathbb{T}}=\frac{1}{m}y_{2}^{m}$ and $\overline{D}_{r}^{\mathbb{T}}=\frac{1}{r}y_{1}^{r}$. And according to Equation 74 of \cite{Iritani}, we have $\bar{p}_{a}^{\mathbb{T}}=\overline{D}_{a}^{\mathbb{T}}-\mathbf{w}_{a}=-\bold{w}_{a}$ for $a\in I_0$, and $\bar{p}_{*}=ry_1^{r}-\bold{p}_{+}=my_{2}^{m}-\bold{p}_{-}$. We define $H:=\bar{p}_{*}\in H^{2}_{CR,\mathbb{T}}(\cF(m,r),\CC)$. So we have $e^{(\sum_a \bar{p}_{a}^{\mathbb{T}} \log q_a)/z}=e^{(H\log q_{*}-\sum_{a\in I_{0}}\mathbf{w}_{a}\log q_{a})/z}.$
\begin{equation}\label{equivariant I-function}
    I^{\mathbb{T}}(q,z)[q^{d}]=\frac{e^{(H\log q_{*}-\sum_{a\in I_{0}}\mathbf{w}_{a}\log q_{a})/z}}{z^{\sum_{i\in I}\lceil \langle D_{i},d\rangle\rceil}\cdot\dprod_{i\in I_{0}}d_{i}!}\cdot\frac{\Gamma(\frac{H+\bold{p}_{-}}{mz}+1-\{-\tilde{d}_{-m}\})}{\Gamma(\frac{H+\bold{p}_{-}}{mz}+\tilde{d}_{-m}+1)}\cdot\frac{\Gamma(\frac{H+\bold{p}_{+}}{rz}+1-{\{-\tilde{d}_{r}}\})}{\Gamma(\frac{H+\bold{p}_{+}}{rz}+\tilde{d}_{r}+1)}\mathbf{1}_{v(d)}
\end{equation}
for $d\in\KKeff$.

Let
\[\begin{split}
&\partial_a=q_a\frac{\partial}{\partial q_a}, \mathcal{D}_i=\sum_a m_{ia}\cdot z\partial_a,\\
&\mathcal{P}_d=q^d \prod_{i,\nu:-\left<D_i,d\right>>\nu\geq 0} (\mathcal{D}_i-\nu z)- \prod_{i,\nu:\left<D_i,d\right>>\nu\geq 0} (\mathcal{D}_i-\nu z).
\end{split}
\]
By direct calculations (Lemma 4.6 in \cite{Iritani}) we have:
\begin{lemma}
It holds that $\mathcal{P}_d(I(q,z))=0,\;\forall d\in\mathbb{N}$.
\end{lemma}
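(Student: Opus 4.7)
My plan is to verify the identity coefficient-by-coefficient in the monomial expansion of $I(q,z)$. Write
$$I^{\TT}(q,z) = e^{(\sum_a \bar{p}_a^{\TT}\log q_a)/z}\sum_{d'\in\KKeff} q^{d'} A_{d'}(z)\,\mathbf{1}_{v(d')},$$
where $A_{d'}(z)$ is the rational $\Gamma$-ratio displayed in \eqref{equivariant I-function}. The first step is the elementary observation that, because $\sum_a m_{ia} p_a = D_i$ (the very definition of $m_{ia}$), we have
$$\mathcal{D}_i\bigl(e^{(\sum_a \bar{p}_a^{\TT}\log q_a)/z}\,q^{d'}\bigr) = \bigl(\bar{D}_i^{\TT} + \langle D_i,d'\rangle z\bigr)\cdot e^{(\sum_a \bar{p}_a^{\TT}\log q_a)/z}\,q^{d'}.$$
Thus each $\mathcal{D}_i$ is diagonal on the Fourier expansion, with eigenvalue $\bar{D}_i^{\TT} + \langle D_i,d'\rangle z$ on the $q^{d'}$-component.

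Using this, I would rewrite
$$\mathcal{P}_d\cdot I^{\TT} = \sum_{d'}e^{(\cdots)/z} q^{d+d'} A_{d'}\mathbf{1}_{v(d')} \!\!\prod_{i,\nu:-\langle D_i,d\rangle>\nu\geq 0}\!\!\bigl(\bar{D}_i^{\TT} + (\langle D_i,d'\rangle-\nu)z\bigr) - \sum_{d'}e^{(\cdots)/z} q^{d'} A_{d'}\mathbf{1}_{v(d')}\!\!\prod_{i,\nu:\langle D_i,d\rangle>\nu\geq 0}\!\!\bigl(\bar{D}_i^{\TT} + (\langle D_i,d'\rangle-\nu)z\bigr).$$
Next I would reindex the first sum by $d'\mapsto d'-d$; since $d\in\mathbb{L}$ acts trivially on $\Bbb K/\LL\simeq\cbox(\Sigma_\omega)$ we have $v(d'-d)=v(d')$, so the twisted-sector label survives intact. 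After reindexing, both sums range over the same set of $d'$, and what remains to prove is the per-term identity
$$A_{d'-d}(z)\!\!\prod_{i,\nu:-\langle D_i,d\rangle>\nu\geq 0}\!\!\bigl(\bar{D}_i^{\TT} + (\langle D_i,d'-d\rangle-\nu)z\bigr) \;=\; A_{d'}(z)\!\!\prod_{i,\nu:\langle D_i,d\rangle>\nu\geq 0}\!\!\bigl(\bar{D}_i^{\TT} + (\langle D_i,d'\rangle-\nu)z\bigr).$$

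This last identity reduces, factor by factor in $i$, to the standard telescoping $\Gamma(x+n)/\Gamma(x) = \prod_{\nu=0}^{n-1}(x+\nu)$: the ratio $A_{d'-d}/A_{d'}$ consists precisely of the $\Gamma$-quotients whose numerator and denominator are shifted by $\langle D_i,d\rangle$, and these either produce the missing product factor (when $\langle D_i,d\rangle>0$) or absorb the extra one (when $\langle D_i,d\rangle<0$). I would do the bookkeeping case-by-case on the sign of $\langle D_i,d\rangle$, with the three types of indices in our setting ($i\in I_0$ with $\bar D_i^{\TT}=0$; $i=-m$; $i=r$). The main obstacle will be the two fractional cases $i\in\{-m,r\}$: here $\langle D_i,d\rangle\in\frac{1}{m}\ZZ$ or $\frac{1}{r}\ZZ$ rather than $\ZZ$, so one must separately track (i) the ceiling exponents $\lceil\langle D_i,d\rangle\rceil$ appearing in the powers of $z$ in $A_{d'}$ versus $A_{d'-d}$ and (ii) the fractional part $\{-\tilde d_{-m}\}$, resp.\ $\{-\tilde d_{r}\}$, inside the $\Gamma$-arguments. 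Verifying that both sides agree after the shift $d'\mapsto d'-d$ is the only technical point; everything else is formal. Once this is settled, the identity above gives $\mathcal{P}_d\cdot I^{\TT}=0$ term by term.
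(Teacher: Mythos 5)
Your overall strategy --- diagonalize each $\mathcal{D}_i$ on the factors $e^{(\sum_a \bar p_a\log q_a)/z}q^{d'}$, shift the summation index by $d$, and match coefficients through the telescoping identity $\Gamma(x+n)=\Gamma(x)\prod_{\nu=0}^{n-1}(x+\nu)$ --- is exactly the ``direct calculation'' that the paper invokes: the paper offers no argument of its own beyond citing Lemma 4.6 of \cite{Iritani}, and that calculation is the one you outline. Two remarks, one cosmetic and one substantive. Cosmetic: the lemma under review is the non-equivariant one, so the eigenvalue of $\mathcal{D}_i$ on the $q^{d'}$-component is $\overline{D}_i+\langle D_i,d'\rangle z$; your formulas mix the non-equivariant operator with $\overline{D}_i^{\TT}$ and $I^{\TT}$, i.e.\ conflate this statement with Lemma \ref{P_d equivariant} (the mechanism is identical in both, and the ``$d\in\mathbb{N}$'' in the statement should be read as $d\in\LL$).

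Substantive gap: after the substitution $d'\mapsto d'-d$ the two sums do \emph{not} range over the same index set --- one runs over $d+\KKeff$ and the other over $\KKeff$, and these differ whenever some $\langle D_i,d\rangle\neq 0$. Hence the per-term $\Gamma$-identity alone does not close the argument: you must also show that every boundary term, i.e.\ a $d'$ lying in one of the two sets but not the other, contributes zero. The standard way to do this is to extend the coefficient $A_{d'}$ by the same $\Gamma$-ratio formula to all of $\KK$ (which is invariant under translation by $\LL$) and to prove that it gives a vanishing contribution off $\KKeff$; this vanishing is not a consequence of telescoping but of cohomological input: for such $d'$ the relevant product acquires a factor $\overline{D}_i+(\langle D_i,d'\rangle-\nu)z$ with $\nu=\langle D_i,d'\rangle$, i.e.\ a bare $\overline{D}_i$, and one then uses $\overline{D}_i=0$ for $i\in I_0$, the Stanley--Reisner-type relation $u_1u_2=0$ (equivalently the product $\overline{D}_{-m}\,\overline{D}_r$ vanishing), and the sector restrictions $\overline{D}_i\,\mathbf{1}_{v(d')}=0$ when $b_i$ does not lie in the cone supporting the sector $v(d')$. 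Your plan asserts that ``both sums range over the same set of $d'$'' and locates the only technical point in the fractional-part bookkeeping at $i\in\{-m,r\}$; the boundary-vanishing step above is a separate, genuinely needed ingredient. Once it is added, your argument goes through and coincides with the calculation the paper delegates to \cite{Iritani}.
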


Let $\mathrm{Eff}_\mathcal{X}\subseteq H_2(X,\mathbb{Z})$ be the semigroup generated by effective stable maps.
For an arbitrary $\tau\in H_{CR}^\ast(\mathcal{X})$, let $\tau_{0,2}$ be the component of $\tau$ consisting of terms of degree 2 and degree 0, and $\tau'=\tau-\tau_{0,2}$.
Choose an arbitrary basis $\{H_a\}$ of $H_{CR}^\ast(\mathcal{X})$ as a $\mathbb{C}$-module, and $\{H^a\}$ its dual basis. Then $H^a$ could be regarded as in $H_{CR}^\ast(\mathcal{X})$ by applying the Poincare duality.

\begin{Def}
The $J$-function
\[J(\tau,z)=e^{\tau_{0,2}/z} \big(\mathbf{1}+\sum_{\stackrel{(d,\ell)\neq (0,0)}{d\in{\mathrm{Eff}_\mathcal{X}}}}\sum_a \dfrac1{\ell!}\left<\mathbf{1},\tau',\cdots,\tau',\frac{H_a}{z-\psi}\right>^X_{0,\ell+2,d} \cdot e^{\tau_{0,2},d}H^a \big).\]
\end{Def}

Let $I(q,z)=1+\frac{\tau(q)}z+o(z^{-1})$ be the expansion of $I$ with respect to $z$ at $z=\infty$. Here $\tau$ is called the mirror map.

\begin{Def}
The equivarinat $J$-function
\[J^{\mathbb{T}}(\tau,z)=e^{\tau_{0,2}/z} \big(\mathbf{1}+\sum_{\stackrel{(d,\ell)\neq (0,0)}{d\in{\mathrm{Eff}_\mathcal{X}}}}\sum_a \dfrac1{\ell!}\left<\mathbf{1},\tau',\cdots,\tau',\frac{H_a}{z-\psi}\right>^X_{0,\ell+2,d} \cdot e^{\tau_{0,2},d}H^a \big).\]
\end{Def}
We also similarly let $I^{\mathbb{T}}(q,z)=1+\frac{\tau(q)}z+o(z^{-1})$ be the expansion of $I$ with respect to $z$ at $z=\infty$. Here $\tau$ is called the equivariant mirror map.
\begin{thms}\label{non-equivariant mirror theorem}[Mirror Theorem]It holds that $I(q,z)=J(\tau(q),z)$.
\end{thms}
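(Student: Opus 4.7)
The plan is to identify $I(q,z)$ with a point of Givental's overruled Lagrangian cone $\mathcal{L}_{\cF(m,r)}\subset \mathcal{H}=H^*_{\CR}(\cF(m,r))((z^{-1}))$ and then invoke the characterization of the $J$-function as the unique family of such points whose $z\to\infty$ asymptotic expansion takes the form $\mathbf{1}+\tau/z+O(z^{-2})$. The theorem then reduces to two independent assertions: (a) for every fixed $q$, the family $I(q,z)$ lies on $\mathcal{L}_{\cF(m,r)}$; and (b) writing $I(q,z)=\mathbf{1}+\tau(q)/z+O(z^{-2})$ defines exactly the mirror map $\tau(q)$ appearing in the statement, so that $I(q,\cdot)=J(\tau(q),\cdot)$ by the uniqueness.

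For (a), I would exploit the GKZ operators $\mathcal{P}_d$ introduced above. The lemma states $\mathcal{P}_d(I(q,z))=0$ for all $d\in\KKeff$; since these operators generate the full GKZ system associated to the extended stacky fan $(\bN,\Sigma_\omega,\beta,S)$ of $\cF(m,r)$, the Picard--Fuchs equations together with the explicit principal term $e^{(\sum_a \bar p_a\log q_a)/z}$ pin down $I(q,z)$ uniquely in the appropriate formal sense. This is precisely the input for the toric stack mirror theorem of Coates--Corti--Iritani--Tseng \cite{CCIT15}: their result gives that any cohomology-valued solution of the GKZ system with this leading exponential factor lies on the Lagrangian cone. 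Since $\cF(m,r)$ is a one-dimensional smooth toric DM stack with semi-projective coarse moduli $\PP^1$, the hypotheses of \cite{CCIT15} are trivially satisfied, so the $I$-function constructed here coincides with the one to which their mirror theorem applies, modulo a translation of notation (the basis $\{p_a\}_{a\in I_0\cup\{*\}}$, the identification $\overline{D}_{-m}=\bar e_{-m}$, $\overline{D}_r=\bar e_r$, and the box elements $\mathbf{1}_{v(d)}$).

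For (b), I would expand the explicit formula
\[
I(q,z)[q^d]=\frac{e^{(H\log q_*)/z}}{z^{\sum_{i\in I}\lceil\langle D_i,d\rangle\rceil}\cdot\prod_{i\in I_0}d_i!}\cdot\frac{\Gamma(\tfrac{H}{mz}+1-\{-\tilde d_{-m}\})}{\Gamma(\tfrac{H}{mz}+\tilde d_{-m}+1)}\cdot\frac{\Gamma(\tfrac{H}{rz}+1-\{-\tilde d_r\})}{\Gamma(\tfrac{H}{rz}+\tilde d_r+1)}\mathbf{1}_{v(d)}
\]
in powers of $z^{-1}$, using $\Gamma(x+a)/\Gamma(x+b)\sim x^{a-b}$. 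Contributions to the $z^{-1}$ coefficient only come from those $d$ with $\sum_i\lceil\langle D_i,d\rangle\rceil=1$, i.e.\ the generators corresponding to the twisted sectors $\mathbf{1}_{v(d)}$ indexed by elements of $\cbox(\Sigma_\omega)$ together with the two untwisted curve classes. Collecting these contributions exactly produces the series $\tau(q)$, and higher-order terms yield the expected $H^*_{\CR}(\cF(m,r))$-valued correction absorbed by the string equation. Combined with (a), the Givental characterization then forces $I(q,z)=J(\tau(q),z)$.

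The principal difficulty is the bookkeeping in step (b), namely ensuring that the $\Gamma$-function expansion is organised according to the actual rational stratification of $\KKeff_{-m}\cup\KKeff_r$ (with the shift $\{-\tilde d_{-m}\}$, respectively $\{-\tilde d_r\}$, producing twisted-sector insertions while the integer part $\tilde d_{-m}\in\ZZ_{\geq 0}$, respectively $\tilde d_r\in\ZZ_{\geq 0}$, produces the curve-class insertions) and that the resulting expression for $\tau(q)$ matches the mirror map implicit in the $J$-function. Once the dictionary between our normalization of $I^{\TT}$ and the CCIT normalization is fixed, (a) is a direct citation, and (b) is an explicit—if slightly intricate—Gamma-function calculation that I would carry out case-by-case on $\KKeff_{-m}$ and $\KKeff_r$.
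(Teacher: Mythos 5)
Your proposal is correct and follows essentially the same route as the paper, which offers no independent proof but simply invokes the toric-stack mirror theorem of \cite{CCIT15} (and \cite{CCLT09}): one identifies the $I$-function here with the CCIT $I$-function, concludes it lies on Givental's Lagrangian cone, and reads off the mirror map from the $z^{-1}$-coefficient exactly as in your step (b), which is how the paper's own mirror-map lemmas are organized. One small caution: the theorem of \cite{CCIT15} asserts cone membership of their explicitly constructed $I$-function, not of an arbitrary GKZ solution with prescribed leading exponential, so your argument should rest on the direct identification of the two $I$-functions (as you indicate) rather than on a uniqueness claim for the GKZ system.
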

\begin{thms}\label{equivariant mirror theorem}[Equivariant Mirror Theorem]
$I^{\mathbb{T}}(q,z)=J^{\mathbb{T}}(\tau(q),z)$.
\end{thms}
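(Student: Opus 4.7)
The plan is to apply Givental's characterization of the equivariant Lagrangian cone $\mathcal{L}^{\TT}\subset H^{*}_{\CR,\TT}(\cF(m,r))((z^{-1}))$. The cone is parametrized by $J^{\TT}$ in the following sense: any $H^{*}_{\CR,\TT}(\cF(m,r))$-valued family $F(q,z)$ taking values on $\mathcal{L}^{\TT}$ whose asymptotic behavior as $z\to\infty$ has the form $F(q,z)=e^{\tau_{0,2}(q)/z}(\mathbf{1}+\tau'(q)/z+O(z^{-2}))$ must satisfy $F(q,z)=J^{\TT}(\tau(q),z)$. Hence the theorem reduces to the single assertion that $I^{\TT}(q,z)\in\mathcal{L}^{\TT}$; the equivariant mirror map $\tau(q)$ is then read off from the $z^{-1}$-expansion exactly as defined just before the statement.

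The main step is to verify that $I^{\TT}$ satisfies the full equivariant Picard-Fuchs system $\mathcal{P}_d\,I^{\TT}(q,z)=0$ for every $d\in\KKeff$, extending the non-equivariant lemma recalled above. This is a direct Gamma-function calculation from formula \eqref{equivariant I-function}: each derivative $q_a\partial_{q_a}$ shifts the summand indexed by $d$ and multiplies the Gamma-ratio by factors controlled via
\[
\frac{\Gamma(x+n+1)}{\Gamma(x+1)}=\prod_{\nu=0}^{n-1}(x+\nu+1),
\]
so that repeated differentiation reproduces the products $\prod_{\nu,i:\langle D_i,d\rangle>\nu\geq 0}(\bar{D}_i^{\TT}+(\langle D_i,d\rangle-\nu)z)$ appearing in the definition of $\mathcal{P}_d$. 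The fractional parts $\{-\tilde{d}_{-m}\}$ and $\{-\tilde{d}_{r}\}$, which split $\KKeff$ into $\KKeff_{-m}\cup\KKeff_{r}$, are absorbed by the twisted-sector class $\mathbf{1}_{v(d)}$ and the shifted Gamma-arguments $(H+\mathbf{p}_{-})/(mz)$ and $(H+\mathbf{p}_{+})/(rz)$. Together with the relations $\bar{p}_*^{\TT}=ry_1^{r}-\mathbf{p}_{+}=my_2^{m}-\mathbf{p}_{-}$ and $\bar{D}_i^{\TT}=0$ for $i\in I_0$, this yields $\mathcal{P}_d I^{\TT}=0$.

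Granting the Picard-Fuchs system together with the correct homogeneity under the Euler grading, cone membership follows from the equivariant version of the Brown-Iritani criterion -- the argument underlying the toric mirror theorem of \cite{CCIT15}, applied over the ground ring $\CC[\mathbf{w}]$: such a solution automatically lies in the image of the symplectic loop group acting on $\mathbf{1}$, and hence on $\mathcal{L}^{\TT}$. Combining $I^{\TT}\in\mathcal{L}^{\TT}$ with the identification of $\tau(q)$ as the $z^{-1}$-coefficient of $I^{\TT}-\mathbf{1}$ and invoking the uniqueness part of Givental's characterization produces $I^{\TT}(q,z)=J^{\TT}(\tau(q),z)$.

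The principal obstacle is the simultaneous bookkeeping of orbifold and equivariant data. One must check that the twisted-sector summands indexed by $v(d)\in\mathrm{Box}(\Sigma_\omega)$ land in the correct age-shifted equivariant pieces of $H^{*}_{\CR,\TT}(\cF(m,r))$, and that the splitting $\KKeff=\KKeff_{-m}\cup\KKeff_{r}$ does not break closure of the Picard-Fuchs system; in particular, contributions with $\tilde{d}_{-m}$ or $\tilde{d}_{r}$ non-integral must be mutually consistent across the two families of operators. As a sanity check, specializing $\mathbf{w}\to 0$ in the final identity must recover Theorem~\ref{non-equivariant mirror theorem}.
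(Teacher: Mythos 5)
The paper does not actually prove this statement: Theorem \ref{equivariant mirror theorem} is quoted as a known result, with the remark immediately after it attributing the mirror theorem to \cite{CCIT15} (toric stacks) and \cite{CCLT09}, and the only original work in that subsection is the subsequent computation of the mirror map $\tau(q)$ and the verification of $\mathcal{P}^{\TT}_d I^{\TT}=0$, which is used later for the quantum cohomology computation rather than as a step in a proof of the mirror theorem. Your proposal therefore attempts more than the paper does, and its first and last reductions (cone membership of $I^{\TT}$ plus the asymptotics $\mathbf{1}+\tau(q)/z+O(z^{-2})$ imply $I^{\TT}=J^{\TT}(\tau(q),z)$) are the standard and correct way to phrase the statement.

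The genuine gap is in the middle: you deduce cone membership from ``the Picard--Fuchs system together with the correct homogeneity'' via an alleged Brown--Iritani criterion. No such criterion exists in that form, and if it did the toric mirror theorem would be almost a formality. Satisfying the extended GKZ/Picard--Fuchs system and the Euler grading is far weaker than lying on Givental's cone: the solution space of the GKZ system is finite-dimensional over the Novikov/equivariant ground ring and contains many elements with the right homogeneity that are not of the form $J^{\TT}(\tau,z)$ for any $\tau$. In \cite{CCIT15} (following Brown's method) cone membership of the $S$-extended $I$-function is proved by an entirely different mechanism: one restricts $I^{\TT}$ to each $\TT$-fixed point, checks that the restriction is a power series in the extended variables whose coefficients have poles in $z$ only at $z=\lambda/k$ for tangent weights $\lambda$ and positive integers $k$, verifies the explicit recursion relations attached to the one-dimensional $\TT$-orbits, and checks a polynomiality condition; these localization data characterize points of $\mathcal{L}^{\TT}$. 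So as written your argument does not close. The repair is either to carry out that recursion/polynomiality verification for $\cF(m,r)$ (which is where the real content of \cite{CCIT15} lies), or to do what the paper does and simply invoke the toric stack mirror theorem of \cite{CCIT15} for the extended stacky fan data of Section \ref{sec:fan}, after which the only task is the extraction of $\tau(q)$ from the $z^{-1}$-coefficient, carried out in the paper's subsequent lemma. Your Gamma-function verification of $\mathcal{P}^{\TT}_d I^{\TT}=0$ is correct and matches Lemma \ref{P_d equivariant}, but in both the paper and \cite{CCIT15} it plays no role in establishing the mirror theorem itself.
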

\begin{remark} \label{Equivariant $J$-function in MT11}
    Mirror theorem for weighted projective space was proved in \cite{CCIT15}. The general case was proved in \cite{CCLT09}. This theorem also implies the results of the equivariant $J$-function in Proposition A.8, A.9 of \cite{MT11}, by taking the $q_{i}\to0$, and $\w_{i}\to0$ limit for $i\in I_{0}$, under the following correspondence of notations:
    \begin{eqnarray*}
        Qe^{t}&=q_{*}\\
        \nu&=\frac{\bold{p}}{r}\\
        \bar{\nu}&=\frac{-\bold{p}}{m}\\       \bold{1}_{0/r}&=\frac{H+\bold{p}_{+}}{\bold{p}}\\        \bold{1}_{0/m}&=\frac{H+\bold{p}_{-}}{-\bold{p}}. 
    \end{eqnarray*}
\end{remark}
And we also note that we can also write the equivariant $I$-function in a similar form by setting a slightly different notation. We define:
\begin{eqnarray*}    \bold{1}_{0,r}&=\frac{H+\bold{p}_{+}}{\bold{p}}\\
\bold{1}_{0,m}&=\frac{H+\bold{p}_{-}}{-\bold{p}}.    
\end{eqnarray*}
And $\bold{1}_{i,m}=\bold{1}_{i}$, $\bold{1}_{i,r}=\bold{1}_{i}$ for $i\neq 0$. Then an alternative form of equivariant $I$-function can be written as:
\begin{eqnarray*}
     &I^{\mathbb{T}}(q,z)=e^{(-\sum_{a\in I_{0}}\mathbf{w}_{a}\log q_{a})/z}(\dsum_{d\in\KKeff_{r}}e^{\frac{-\bold{p}_{-}\log q_{*}}{z}}q^{d}{z^{-\sum_{i\in I}\lceil \langle D_{i},d\rangle\rceil}}\cdot\frac{\Gamma(\frac{\bold{p}}{rz}+1-{\{-\tilde{d}_{r}}\})}{\tilde{d}_{-m}!\dprod_{i\in I_{0}}d_{i}!\Gamma(\frac{\bold{p}}{rz}+\tilde{d}_{r}+1)}\mathbf{1}_{v(d),r}\\&+\dsum_{d\in\KKeff_{-m}}e^{\frac{-\bold{p}_{+}\log q_{*}}{z}}q^{d}{z^{-\sum_{i\in I}\lceil \langle D_{i},d\rangle\rceil}}\cdot\frac{\Gamma(\frac{-\bold{p}}{mz}+1-\{-\tilde{d}_{-m}\})}{\tilde{d}_{r}!\dprod_{i\in I_{0}}d_{i}!\Gamma(\frac{-\bold{p}}{mz}+\tilde{d}_{-m}+1)}\mathbf{1}_{v(d),m}).
\end{eqnarray*}
\begin{remark}
    This alternative form of equivariant $I$-function splits the terms of $\KKeff_{r}$ and $\KKeff_{-m}$, also involving only $\CC[\w]$-coefficient. Note that this form can also take the non-equivariant limit to get the non-equivariant $I$-function, using the following relations:
    \begin{eqnarray*}
        \bold{1}_{0,r}+\bold{1}_{0,m}&=\bold{1}\\
        \bold{p}_{-}\bold{1}_{0,r}+\bold{p}_{+}\bold{1}_{0,m}&=-H
    \end{eqnarray*}
    and for $i\geq2$, and $\w_{i}\to0$
    \begin{equation*}        \bold{p}_{-}^{i}\bold{1}_{0,r}+\bold{p}_{+}^{i}\bold{1}_{0,m}=0.
    \end{equation*}
\end{remark}

Now we calculate the equivariant mirror map $\tau(q)$. Note that the equivariant mirror map $\tau(q)$ also gives non-equivariant mirror map by taking non-equivariant limit. 

Now we calculate the equivariant mirror map $\tau(q)$. We use the notations of Lemma 4.2 in \cite{Iritani} to formulate our $\tau(q)$.

The first term of $\tau(q)$ in our case, which only corresponds to $\tau_{0,2}$, is $\tau_{0,2}=H\log q_{*}-\sum_{a\in I_{0}}\mathbf{w}_{a}\log q_{a}$. This also shows the $H$-coefficient $t=\log q_{*}$ 

And the second and third terms are given by the contribution of $d\in\KKeff$ such that $0<\langle \hat\rho,d\rangle\leq1$. We also notice that for $\tilde{d}_{-m}\in\ZZ_{<0}$ or $\tilde{d}_{r}\in\ZZ_{<0}$, there are extra factors of $z^{-1}$, which will not be  contained in the mirror map.

We write $\tau=\tau_{0,2}+\dsum_{i\in I_{0}\setminus\{0\}}t^{i}\mathbf{1}_{i}$, and calculate the coefficient of $\mathbf{1}_{i}$ respectively. The following lemma shows that each of $\mathbf{1}_{i}$ component consists of $q^{d}$-terms with same degree.
\begin{lemma}
   For $d\in\KKeff$ such that $0<\langle \hat\rho,d\rangle\leq1$. We have:
   \begin{align*}
       v(d)&=-m(1-\langle\hat{\rho},d\rangle)=-m(1-\deg d)\\
       v(d)&=r(1-\langle\hat{\rho},d\rangle)=r(1-\deg d)
   \end{align*}

\end{lemma}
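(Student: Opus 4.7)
\emph{Proof proposal.} The plan is to combine two ingredients: the one-term reductions $v(d)=-m\{-\langle\hat\rho,d\rangle\}$ for $d\in\KK_{-m}$ and $v(d)=r\{-\langle\hat\rho,d\rangle\}$ for $d\in\KK_{r}$ already recorded in the excerpt, together with an elementary evaluation of $\{-\langle\hat\rho,d\rangle\}$ under the degree hypothesis. The two displayed equations in the lemma are to be read as the $\KKeff_{-m}$ and $\KKeff_r$ cases respectively, not as two simultaneous identities.

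First I would reconfirm the one-term reductions from the defining formula $v(d)=\sum_i\{-\langle D_i,d\rangle\}b_i$. For $d\in\KKeff_{-m}$ the constraints force $\langle D_i,d\rangle\in\ZZ$ for every $i\neq -m$: the values on $I_0\cup\{r\}$ are non-negative integers by hypothesis, and $\langle D_{-1},d\rangle=-m c_{-m}-\sum_{j=2}^{m-1}jc_{-j}+\sum_{i=1}^{r}ic_i$ is integral because $-mc_{-m}\in\ZZ$ (from $c_{-m}\in\tfrac{1}{m}\ZZ$) and the remaining summands are integers. Hence every term in the sum except $i=-m$ vanishes, and using $b_{-m}=-m$ gives $v(d)=-m\{-\tilde d_{-m}\}$. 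Writing $\hat\rho=\sum_{i\in I}D_i$ and using the same integrality to drop the terms $i\neq -m$, we get $\langle\hat\rho,d\rangle\equiv\tilde d_{-m}\pmod\ZZ$, and therefore $v(d)=-m\{-\langle\hat\rho,d\rangle\}$. The $\KKeff_r$ case is symmetric.

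Next I would evaluate $\{-\langle\hat\rho,d\rangle\}$ under the hypothesis $0<\langle\hat\rho,d\rangle\leq 1$. With the convention $\{x\}=x-\lfloor x\rfloor\in[0,1)$, the condition $\langle\hat\rho,d\rangle\in(0,1)$ places $-\langle\hat\rho,d\rangle\in(-1,0)$, so $\{-\langle\hat\rho,d\rangle\}=1-\langle\hat\rho,d\rangle$; the boundary value $\langle\hat\rho,d\rangle=1$ is handled by $\{-1\}=0=1-1$. Substituting back into the one-term formulas yields $v(d)=-m(1-\langle\hat\rho,d\rangle)=-m(1-\deg d)$ on $\KKeff_{-m}$ and $v(d)=r(1-\langle\hat\rho,d\rangle)=r(1-\deg d)$ on $\KKeff_r$, which is the content of the lemma.

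There is no substantive obstacle; the entire argument reduces to careful bookkeeping of which coordinates of $d$ are integral, together with a single fractional-part identity. The one subtlety worth flagging is the endpoint case $\langle\hat\rho,d\rangle=1$, which only works consistently with the convention $\{x\}\in[0,1)$ and would be treated explicitly in the write-up so that the reader can see that the formula $\{-\langle\hat\rho,d\rangle\}=1-\langle\hat\rho,d\rangle$ extends to the boundary of the interval.
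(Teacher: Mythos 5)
Your argument is correct and is precisely the ``direct calculation'' the paper leaves implicit: you reduce $v(d)$ to the single fractional-part term via the integrality of $\langle D_i,d\rangle$ for $i\neq -m$ (resp.\ $i\neq r$), identify $\langle\hat\rho,d\rangle$ with $\tilde d_{-m}$ (resp.\ $\tilde d_r$) mod $\ZZ$, and evaluate $\{-\langle\hat\rho,d\rangle\}=1-\langle\hat\rho,d\rangle$ on $(0,1]$, including the endpoint. Your case-wise reading of the two displayed identities (for $\KKeff_{-m}$ and $\KKeff_r$ respectively) is also the intended one, so nothing further is needed.
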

\begin{proof}
    This comes from direct calculation.
\end{proof} 
From this lemma we know each $t^{i}$ is a weighted homogeneous polynomial $\{q_{a}\}$. 

We write $q^{d}=q_{*}^{m\tilde{d}_{-m}}\dprod_{i=1}^{m-1}(q_{-i}q_{*}^{i})^{d_{-i}}\dprod_{i=0}^{r-1}q_{i}^{d_{i}}$ for $d\in\KKeff_{r}$, and $q^{d}=q_{*}^{r\tilde{d}_{r}}\dprod_{i=1}^{r-1}(q_{i}q_{*}^{i})^{d_{i}}\dprod_{i=0}^{m-1}q_{-i}^{d_{-i}}$ for $d\in\KKeff_{-m}$. Considering the degree constraints, For $0<\deg d\leq1$, the first two parts of the product vanishes. So for $d\in\KKeff_{r}$, $d$ reduces to $\dprod_{i=0}^{r-1}q_{i}^{d_{i}}$, with $\tilde{d}_{r}=\deg d-\dsum d_{i}$. For $d\in\KKeff_{-m}$, $\dprod_{i=0}^{m-1}q_{-i}^{d_{i}}$  with $\tilde{d}_{-m}=\deg d-\dsum d_{i}$.
\begin{lemma}[Equivariant mirror maps]
    \begin{equation}
        t^{i}=\dsum_{\sum_{j=1}^{m-1}(m-j)d_{-j}=m+i}\frac{q^{d}}{\dprod_{j=1}^{m-1}d_{-j}!}\cdot\frac{\Gamma(1+\frac{i}{m})}{\Gamma(2+\frac{i}{m}-\dsum_{j=1}^{m-1}{d_{-j}})}
    \end{equation}
    for $-m+1\leq i\leq -1$, and
    \begin{equation}
        t^{i}=\dsum_{\sum_{j=1}^{r-1}(r-j)d_{j}=r-i}\frac{q^{d}}{\dprod_{j=0}^{r-1}d_{j}!}\cdot\frac{\Gamma(1-\frac{i}{r})}{\Gamma(2-\frac{i}{r}-\dsum_{j=1}^{r-1}{d_{j}})}
    \end{equation}
    for $1\leq i\leq r-1$, and
    \begin{equation}
        t^{0}=q_{0}-\dsum_{i\in I_{0}}\bold{w}_{a}\log q_{a}
    \end{equation}
\end{lemma}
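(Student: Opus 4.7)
The plan is to expand the explicit equivariant $I$-function in Equation~\eqref{equivariant I-function} as $I^{\TT}(q,z)=\mathbf{1}+\tau(q)/z+O(z^{-2})$ and read off $\tau(q)$ one coefficient at a time. First, I would separate the exponential prefactor $e^{(H\log q_{*}-\sum_{a\in I_{0}}\mathbf{w}_{a}\log q_{a})/z}$ from the $d=0$ summand; expanding to first order in $1/z$ yields $\tau_{0,2}=H\log q_{*}-\sum_{a\in I_{0}}\mathbf{w}_{a}\log q_{a}$, which accounts for $t=\log q_{*}$ in the $H$-coefficient and for the $-\sum_{a}\mathbf{w}_{a}\log q_{a}$ piece of $t^{0}$.

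Next I would analyze the $d\neq 0$ summands. For each $d\in\KKeff$, the $z$-power coming from $z^{-\sum_{i}\lceil\langle D_{i},d\rangle\rceil}$ together with the two Gamma ratios controls the order in $1/z$. Using $\sum_{i\in I}\lceil\langle D_{i},d\rangle\rceil=\lceil\tilde{d}_{-m}\rceil+\lceil\tilde{d}_{r}\rceil+\sum_{a\in I_{0}}d_{a}\geq\deg d$, a genuine $z^{-1}$ contribution forces $\sum_{i}\lceil\langle D_{i},d\rangle\rceil=1$ and requires both Gamma denominators to stay finite as $z\to\infty$, so $\tilde{d}_{-m}$ and $\tilde{d}_{r}$ must avoid non-positive integer values. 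A short case analysis then identifies the admissible $d$: (a) $d_{0}=1$ with all other coordinates zero, contributing $q_{0}\mathbf{1}_{0}$; (b) $d\in\KKeff_{-m}$ with $d_{0}=0$, $d_{j}=0$ for $j>0$, $d_{*}=0$, $d_{-j}\geq 0$ for $j=1,\ldots,m-1$, and $\deg d=1+i/m$ for some $-m+1\leq i\leq -1$; (c) the mirror situation for $d\in\KKeff_{r}$.

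For class (b), the relation $d_{*}=m\tilde{d}_{-m}+\sum_{j}jd_{-j}=0$ together with the degree condition yields $\tilde{d}_{-m}=1+i/m-\sum_{j=1}^{m-1}d_{-j}$, hence $\{-\tilde{d}_{-m}\}=-i/m$ and $\lceil\tilde{d}_{-m}\rceil=1-\sum_{j}d_{-j}$. Substituting these, the first Gamma ratio degenerates at leading $z$ to $\Gamma(1+i/m)/\Gamma(2+i/m-\sum_{j=1}^{m-1}d_{-j})$, while $\tilde{d}_{r}=0$ collapses the second ratio to $1$. The degree condition $\deg d=(m+i)/m$ becomes $\sum_{j=1}^{m-1}(m-j)d_{-j}=m+i$, and $\prod_{a\in I_{0}}d_{a}!=\prod_{j=1}^{m-1}d_{-j}!$. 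Summing over admissible $d$ reproduces the stated formula for $t^{i}$ with $-m+1\leq i\leq -1$; class (c) follows by the symmetric argument, and $t^{0}$ is assembled from the $q_{0}$ term of class (a) together with the $\tau_{0,2}$-piece. The main technical obstacle is the careful handling of $\{-\tilde{d}_{-m}\}$ and $\lceil\tilde{d}_{-m}\rceil$ in the regime $\tilde{d}_{-m}<0$, together with checking that the potentially competing $d$ with $\deg d=1$ and $v(d)=0$ beyond $d_{0}=1$ (those with $\sum_{j}(m-j)d_{-j}=m$) contribute zero, because their Gamma denominator $\Gamma(\tilde{d}_{-m}+1)$ evaluates at a non-positive integer and produces a pole that pushes their contribution to order $z^{-2}$ or lower.
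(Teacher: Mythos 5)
Your proposal is correct and follows essentially the same route as the paper: the paper's proof is exactly "take the $z\to\infty$ limit of $zI^{\TT}(q,z)[q^{d}]$" after the preceding degree analysis singling out the $d$ with $0<\deg d\leq 1$ and discarding the terms with $\tilde{d}_{-m}\in\ZZ_{<0}$ or $\tilde{d}_{r}\in\ZZ_{<0}$ as carrying an extra $z^{-1}$, which is precisely your case analysis and Gamma-factor bookkeeping, just spelled out in more detail. The only nitpick is the phrase "must avoid non-positive integer values": the denominators $\Gamma(\tfrac{H+\mathbf{p}_{\pm}}{\cdot z}+\tilde{d}+1)$ only blow up when $\tilde{d}$ is a \emph{negative} integer (indeed $\tilde{d}_{r}=0$ occurs in your class (b)), and your subsequent computation treats this correctly.
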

\begin{proof}
    The lemma comes dirctly from taking limit $z\rightarrow\infty$ for $zI(q,z)[q^{d}]$.
\end{proof}
With this lemma we can conclude the equivariant mirror map $\tau(q)=tH+\dsum_{i\in I_{0}}t^{i}\bold{1}_{i}$
Similarly, we define
\[\begin{split}
&\partial_a=q_a\frac{\partial}{\partial q_a}, \mathcal{D}_i^{\mathbb{T}}=\sum_a m_{ia}\cdot z\partial_a+\mathbf{w}_{i},\\
&\mathcal{P}^{\mathbb{T}}_d=q^d \prod_{i,\nu:-\left<D_i,d\right>>\nu\geq 0} (\mathcal{D}_i^{\mathbb{T}}-\nu z)- \prod_{i,\nu:\left<D_i,d\right>>\nu\geq 0} (\mathcal{D}_i^{\mathbb{T}}-\nu z).
\end{split}
\]
\begin{lemma}\label{P_d equivariant}
It holds that $\mathcal{P}^{\mathbb{T}}_d(I^{\mathbb{T}}(q,z))=0,\;\forall d\in\mathbb{L}$.
\end{lemma}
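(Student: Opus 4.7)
The plan is to mimic the proof of the non-equivariant Lemma 4.6 of Iritani that is already cited above, replacing divisor classes by their $\TT$-equivariant lifts and keeping track of the extra additive constant $\mathbf{w}_i$ in $\mathcal{D}_i^{\TT}$. The key observation is that the substitution $\bar{D}_i \rightsquigarrow \bar{D}_i^{\TT}$, $\bar{p}_a \rightsquigarrow \bar{p}_a^{\TT}$, $\mathcal{D}_i \rightsquigarrow \mathcal{D}_i^{\TT}$ preserves the algebraic identities that force $\mathcal{P}_d(I)=0$; the $\mathbf{w}_i$-shift precisely compensates for the shift $\bar{p}_a^{\TT}=\bar{D}_a^{\TT}-\mathbf{w}_a$ appearing in the exponential prefactor of $I^{\TT}$.

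First I would establish the fundamental eigenvalue computation: writing $P^{\TT}(q,z)=\exp\!\bigl((\sum_a \bar{p}_a^{\TT}\log q_a)/z\bigr)$ and $D_i=\sum_a m_{ia}p_a$, one checks from $\bar{p}_a^{\TT}=\bar{D}_a^{\TT}-\mathbf{w}_a$ (for $a\in I_0$, and the analogous relation for $p_*$) that
\[
\sum_a m_{ia}\,\bar{p}_a^{\TT}+\mathbf{w}_i \;=\; \bar{D}_i^{\TT}.
\]
Since $z\partial_a P^{\TT}=\bar{p}_a^{\TT} P^{\TT}$ and $z\partial_a(q^d)=z d_a q^d$, for any $z$-independent class $g$ one gets
\[
(\mathcal{D}_i^{\TT}-\nu z)\bigl(P^{\TT}\,q^d\,g\bigr)\;=\;\bigl(\bar{D}_i^{\TT}+(\langle D_i,d\rangle-\nu)z\bigr)\,P^{\TT}\,q^d\,g .
\]
This is the only place where the $\TT$-equivariant structure enters; everything that follows is the same combinatorial manipulation as in the non-equivariant case.

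Second, I would apply the two products defining $\mathcal{P}_d^{\TT}$ term-by-term to the expansion $I^{\TT}=P^{\TT}\sum_{e\in\KKeff} q^e F_e^{\TT}(z)\,\mathbf{1}_{v(e)}$. Using Step~1,
\[
\prod_{i,\nu:\langle D_i,d\rangle>\nu\geq 0}(\mathcal{D}_i^{\TT}-\nu z)\,I^{\TT}
=P^{\TT}\sum_{e}q^e F_e^{\TT}\!\prod_{i,\nu:\langle D_i,d\rangle>\nu\geq 0}\!\bigl(\bar{D}_i^{\TT}+(\langle D_i,e\rangle-\nu)z\bigr)\mathbf{1}_{v(e)},
\]
and similarly for the other product in $\mathcal{P}_d^{\TT}$, after which multiplying by $q^d$ shifts $e\mapsto e+d$. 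The formal identity
\[
\frac{\prod_{i,\nu:\langle D_i,e+d\rangle\leq \nu<0}(\bar{D}_i^{\TT}+(\langle D_i,e+d\rangle-\nu)z)}{\prod_{i,\nu:\langle D_i,e+d\rangle>\nu\geq 0}(\bar{D}_i^{\TT}+(\langle D_i,e+d\rangle-\nu)z)}\;\prod_{i,\nu:-\langle D_i,d\rangle>\nu\geq 0}\!\bigl(\bar{D}_i^{\TT}+(\langle D_i,e+d\rangle-\nu)z\bigr)
\]
equals the analogous expression with $e$ and $\langle D_i,d\rangle>\nu\geq 0$ in place of $e+d$ and $-\langle D_i,d\rangle>\nu\geq 0$, because in both cases the ratio collapses to
\[
\prod_{i}\prod_{\nu=0}^{\max(\langle D_i,e\rangle,\langle D_i,e+d\rangle)-1}\!\!\bigl(\bar{D}_i^{\TT}+(\langle D_i,e\rangle-\nu)z\bigr)^{\pm1},
\]
after telescoping; also $v(e+d)=v(e)$ since $d\in\LL$ maps to $0$ in $\mathrm{Box}(\Sigma_\omega)$. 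Combining the two terms of $\mathcal{P}_d^{\TT}$ yields exact cancellation.

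The only genuinely delicate step is Step~1, the identity $\sum_a m_{ia}\bar{p}_a^{\TT}+\mathbf{w}_i=\bar{D}_i^{\TT}$; this is the place where one must treat $i\in I_0$, $i=-m$, and $i=r$ separately using the explicit expansions
\[
D_{-m}=\tfrac{1}{m}p_*-\dsum_{i=1}^{m-1}\tfrac{i}{m}p_{-i},\qquad D_r=\tfrac{1}{r}p_*-\dsum_{i=1}^{r-1}\tfrac{i}{r}p_i,\qquad D_i=p_i\ (i\in I_0),
\]
together with $\bar{p}_*^{\TT}=H-\mathbf{p}_-=ry_1^r-\mathbf{p}_+$ and $\bar{p}_a^{\TT}=-\mathbf{w}_a$ for $a\in I_0$. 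After that, Steps~2--3 are formal and go through verbatim from Iritani's argument, giving $\mathcal{P}_d^{\TT}(I^{\TT})=0$ for every $d\in\LL$.
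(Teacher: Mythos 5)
Your proposal is correct and matches the paper's approach: the paper disposes of this lemma with the single line ``by direct calculation,'' and what you write out is precisely that calculation, namely Iritani's non-equivariant argument (Lemma 4.6) transported to the equivariant setting via the eigenvalue identity $\sum_a m_{ia}\bar{p}_a^{\TT}+\mathbf{w}_i=\bar{D}_i^{\TT}$, which is consistent with the Gamma-factor form of $I^{\TT}$ in \eqref{equivariant I-function} (note only that the effective equivariant divisor class there is $\frac{H+\mathbf{p}_{+}}{r}=y_1^{r}$, so the paper's displayed ``$\overline{D}_{r}^{\TT}=\frac{1}{r}y_1^{r}$'' should be read with that normalization).
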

\begin{proof}
    By direct calculation.
\end{proof}

\subsection{Equivariant quantum cohomology}
\

In this section we calculate equivariant quantum cohomology ring  $QH^{*}_{\mathbb{T}}(\cF(m,r),\CC)$ using the relation $(\mathcal{P}_{d}^{\mathbb{T}}J,H_{\sigma})=0$ for any $H_{\sigma}$. We start from the calculation of non-equivariant operator $\calD_{i}$.

$J(\tau,z)=\sum_a  \left<\!\left<1,\frac{H_a}{z-\psi} \right>\!\right>_{0,2} H^a$ follows directly from Definition 2.2.

On the other hand, consider the Frobenius algebra $V=QH^\ast(\cF(m,r),\CC)\cong\mathrm{Jac}(\cF(m,r))$.
Let $H_\alpha$ (which coincides with the $H_a$ above in our case) be its basis as a $\mathbb{C}$ vector space. We identify $V$ and its tangent space $T_pV$ for $p$ a semisimple point on $V$.
Define quantum connection $\nabla_a=z\frac{\partial}{\partial t_{a}} -\mathbf{1}_a\ast$ for $a\in I_{0}$ and $\nabla_*=z\frac{\partial}{\partial t} -H\ast$.
Consider differential equations system: (Quantum Differential Equation, QDE)
\[
\nabla_a h=0,\quad a\in I_{0}\bigcup\{*\}
\]
Let $S_\sigma=\sum_a \left<\!\left<H_a,\frac{H_\sigma}{z-\psi} \right>\!\right> H^a$. This forms a set of fundamental solutions to the QDE.

Let $(\alpha,\beta)$ be the Poincare paring $\int_{\cF(m,r)} \alpha \cup \beta$. Then
$(J,H_\sigma) = \left<\!\left<1,\frac{H_\sigma}{z-\psi} \right>\!\right>_{0,2}^{\cF(m,r)} = (S_\sigma,1)$
by the duality of $H_a$ and $H^a$.
Inducting on $k$ with the Leibniz rule, we have
$\big( z\frac{\partial}{\partial t_{i_1}} \cdots z\frac{\partial}{\partial t_{i_k}} J, H_\sigma \big) = (H_{i_1}\cdots H_{i_k}S_\sigma,1)$. Then we use the following fact to calculate $\calD_{i}$:
\begin{equation}
    \calD_{i}=\dsum_{a}m_{ia}zq_{a}\frac{\partial}{\partial q_{a}}=\dsum_{a}m_{ia}q_{a}\sum_{t^{i}}\frac{\partial t^{i}}{\partial q_{a}}\frac{\partial}{\partial t^{i}}
\end{equation}
In our case the basis $H_{i}$ is taken in respect of our $t^{i}$. We can notice $\frac{\partial}{\partial q_{0}}=\frac{\partial}{\partial t^{0}}$, $\frac{\partial}{\partial q_{1}}=\frac{\partial}{\partial t^{1}}$, and $\frac{\partial}{\partial q_{-1}}=\frac{\partial}{\partial t^{-1}}$

And by direct calculation, we have:
\begin{equation}
    \prod_{i,\nu=1,2,\dots,r} (\mathcal{D}_1^{\mathbb{T}}-\nu z)J^{\TT}=q_{1}^{r}(z\frac{\partial}{\partial q_{1}}+\bold{w}_{1})^{r}J^{\TT}
\end{equation}
This implies:
\begin{equation}
    \prod_{\nu=0}^{l-1} (\mathcal{D}_1^{\mathbb{T}}-\nu z)S_{\sigma}=q_{1}^{l}(\bold{1}_{1}*)^{l}S_{\sigma}
\end{equation}
for any $S_{\sigma}$.
Similarly, we have:
\begin{equation}
    \prod_{\nu=0}^{l-1}(\mathcal{D}_{-1}^{\mathbb{T}}-\nu z)S_{\sigma}=q_{-1}^{l}(\bold{1}_{-1}*)^{l}S_{\sigma}
\end{equation}
for any $S_{\sigma}$.

Using the Lemma \ref{P_d equivariant}, we take $d$ by setting the values of all $\langle D_{i},d\rangle$ with the constraints of $\dsum_{i\in I}iD_{i}=0$. By setting $\langle D_{1},d\rangle=i$ and $\langle D_{i},d\rangle=-1$, with other pairings 0, the lemma shows:
\begin{equation}
    \mathcal{D}_{i}^{\TT}J^{\TT}=q_{i}(z\frac{\partial}{\partial q_{1}}+\bold{w}_{1})^{i}J^{\TT}
\end{equation}
 for $i=1,2,\dots,r$, and any $S_{\sigma}$. Similarly, we also have:
\begin{equation}
    \mathcal{D}_{-i}^{\TT}J^{\TT}=q_{-i}(z\frac{\partial}{\partial q_{-1}}+\bold{w}_{-1})^{i}J^{\TT}
\end{equation}
 for $i=1,2,\dots,m$, and any $S_{\sigma}$. By setting $\langle D_{1},d\rangle=1$ and $\langle D_{-1},d\rangle=1$, with other pairings 0, the lemma gives
\begin{equation}
    \bold{1}_{1}*\bold{1}_{-1}=q_{*}
\end{equation}

 For $\mathcal{D}_{r}^{\TT}$ and $\mathcal{D}_{-m}^{\TT}$, with direct calculation we also have: 
\begin{align*}
    \calD_{r}^{\mathbb{T}}J^{\TT}&=\frac{1}{r}((z\frac{\partial}{\partial t}+\bold{p}_{+})-\dsum_{i=1}^{r-1}i\calD_{i}^{\mathbb{T}})J^{\TT}\\
    \calD_{-m}^{\mathbb{T}}J^{\TT}&=\frac{1}{m}((z\frac{\partial}{\partial t}+\bold{p}_{-})-\dsum_{i=1}^{m-1}i\calD_{-i}^{\mathbb{T}})J^{\TT}.
\end{align*}
Now we use the following facts to calculate $(\bold{1}_{1}*)^{r}$, and similarly $(\bold{1}_{-1}*)^{m}$:
\begin{equation}
    [z^{0}]\dsum_{H_{a}}\big((z\frac{\partial}{\partial t_{1}}+\bold{w}_{1})^{r}J^{\TT},H^{a}\big)H_{a}=(\bold{1}_{1})^{r}
\end{equation}
and similarly,
\begin{equation}
    [z^{0}]\dsum_{H_{a}}\big((z\frac{\partial}{\partial t_{-1}}+\bold{w}_{-1})^{m}J^{\TT},H^{a}\big)H_{a}=(\bold{1}_{-1})^{m}
\end{equation}

By $\mathcal{P}_d I^{\mathbb{T}}=0, I^{\mathbb{T}}=J^{\mathbb{T}}$, so we know that:
\[
\begin{split}
0 & = \dsum_{H_{a}}(\mathcal{P}_d J^{\mathbb{T}}, H^a)H_a \\
& = \dsum_{H_{a}}\big( \big( q^d\displaystyle\prod_{i,\nu:-\left<D_i,d\right>>\nu\geq 0}(\calD^{\mathbb{T}}_{i}-\nu z) - \displaystyle\prod_{i,\nu:\left<D_i,d\right>>\nu\geq 0}(\calD^{\mathbb{T}}_{i} -\nu z)
\big)J,H^a\big)H_a
\end{split}
\]

Take $d$ letting $\langle D_{1},d\rangle=n$, and $\langle D_{r},d\rangle=-1$, we have:
\begin{equation}
    (\bold{1}_{1})^{r}=\frac{1}{r}(H+\bold{p}_{+}-\dsum_{i=1}^{r-1}iq_{i}(\bold{1}_{1})^i)
\end{equation}
Similarly, take $d$ letting $\langle D_{-1},d\rangle=m$, and $\langle D_{-m},d\rangle=-1$, we have:
\begin{equation}
    (\bold{1}_{-1})^m=\frac{1}{m}(H+\bold{p}_{-}-\dsum_{i=1}^{m-1}iq_{-i}(\bold{1}_{-1})^i)
\end{equation}
So if we take $X=\bold{1}_{1}$ and $\overline{X}=\bold{1}_{-1}$. By dimesion argument, $QH^{*}_{\mathbb{T}}(\cF(m,r),\CC)$ should have no other non-trivial relation. We obtain

\begin{equation}
    QH^\ast_{\mathbb{T}}(\cF(m,r),\CC)\cong\CC[X,\overline X][\bold{w}]/\langle X\overline X-q_{*},rX^{r}-m\overline{X}^{m}-\bold{p}+\dsum_{i=-m+1}^{-1}iq_{i}\overline{X}^{-i}+\dsum_{i=1}^{r-1}iq_{i}X^{i}\rangle
\end{equation}
From the above relation, we can also write $\overline{X}=q_{*}X^{-1}$, and if we set $\tilde{q}_{i}=q_{i}$ for $i=0,1,\dots,r$, and $\tilde{q}_{i}=q_{i}q_{*}^{-i}$ for $i=0,-1,\dots,-m$. Note that we have $\tilde{q}_{-m}=q_{*}^{m}$, and $\tilde{q}_{r}=1$. Then the equivariant quantum cohomology can be simplified into:
\begin{equation}
    QH^\ast_{\mathbb{T}}(\cF(m,r),\CC)\cong\CC[X][\bold{w}]/\langle -\bold{p}+\dsum_{i=-m}^{r}i\tilde{q}_{i}X^{i}\rangle
\end{equation}
With this, we directly conclude:

\begin{prop}
$QH^\ast_{\TT}(\cF(m,r),\CC)\cong\mathrm{Jac}(W_\mathbb{T})$ as Frobenius algebras.
\end{prop}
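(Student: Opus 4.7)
The proposition asserts two things simultaneously: an isomorphism of $\CC[\w]$-algebras, and a compatibility of the Frobenius pairings. The hard work for the first is already completed in the calculation immediately preceding the statement, so the plan is to spell out the ring isomorphism explicitly and then verify the pairing compatibility through the semisimple decomposition.

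For the ring isomorphism, I would expand the defining polynomial $-\mathbf{p}+\sum_{i=-m}^r i\tilde q_i X^i$ of the quantum cohomology presentation, using $\tilde q_r = 1$ and the vanishing of the $i=0$ term, as
$$rX^r + \sum_{i=1}^{r-1}i\tilde q_i X^i - \sum_{i=1}^{m}i\tilde q_{-i}X^{-i} - \mathbf{p}.$$
A direct computation of $\partial W_\TT/\partial y = Y\partial_Y W_\TT$ from the explicit form of $W_\TT$ produces
$$rY^r + \sum_{\ell=1}^{r-1}\ell\tilde q_\ell Y^\ell - \sum_{\ell=1}^{m}\ell\tilde q_{-\ell}Y^{-\ell} - \tilde p.$$
These agree under $X\leftrightarrow Y$ and $\mathbf{p}\leftrightarrow\tilde p$. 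Since $X$ is invertible in $QH^\ast_\TT$ (via $X\overline X = q_*$ with $q_*$ a unit), the quantum cohomology really lives over $\CC[X,X^{-1}][\w]$, which matches $\mathrm{Jac}(W_\TT) = \CC[\w][Y,Y^{-1}]/\langle\partial W_\TT/\partial y\rangle$ as $\CC[\w]$-algebras.

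For the Frobenius structure, I would pass to the Zariski-open locus in the $(q,\w)$-parameter space where both rings become semisimple. On this locus each algebra splits as a direct sum of copies of the base ring, indexed by the critical points $\{P_\alpha\}$ of $W_\TT$ (equivalently, by the $\TT$-fixed points of $\cF(m,r)$ under the ring isomorphism above). Computing the residue pairing at a primitive idempotent $\phi_\alpha$ localized at $P_\alpha$ gives $(\phi_\alpha,\phi_\alpha) = 1/(\partial_y^2 W_\TT)(P_\alpha)$, while Atiyah--Bott localization of the equivariant Poincar\'e pairing at the corresponding $\TT$-fixed point gives $1/e_\TT(T_{P_\alpha}\cF(m,r))$. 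Compatibility of the Frobenius structures thus reduces to the toric mirror identity
$$(\partial_y^2 W_\TT)(P_\alpha) = e_\TT(T_{P_\alpha}\cF(m,r)).$$
The main obstacle is this last check: it is a direct but careful toric-stacky computation at the two stacky fixed points $B\mu_r$ and $B\mu_m$, where the tangent weights read off from the stacky fan of Section \ref{sec:fan} must coincide with the Hessians of $W_\TT$ at the two families of critical points accumulating near $Y=0$ and $Y=\infty$. Once established on the semisimple locus, the Frobenius isomorphism extends by continuity to the full parameter space.
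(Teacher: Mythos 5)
The ring--isomorphism half of your proposal is exactly the paper's: the presentation $\CC[X,X^{-1}][\w]/\langle rX^{r}+\sum_{i=1}^{r-1}i\tilde q_iX^{i}-\sum_{i=1}^{m}i\tilde q_{-i}X^{-i}-\mathbf{p}\rangle$ obtained from the QDE computation is matched term by term with $\partial W_\TT/\partial y$. The gap is in the pairing comparison. Reducing to idempotent self-pairings is legitimate (semisimplicity plus the Frobenius property determine the pairing from the numbers $(\phi_\alpha,\phi_\alpha)$), and your B-side evaluation $(\phi_\alpha,\phi_\alpha)=1/(\partial_y^2W_\TT)(P_\alpha)$ is correct. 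But your A-side evaluation is not: the equivariant Poincar\'e self-pairing of the quantum idempotent is $1/\Delta^\alpha(q)$, a genuinely $q$-dependent quantity (the paper computes $\Delta^\alpha=r\prod_{\beta\neq\alpha}(z_\alpha-z_\beta)/z_\alpha^{m-1}$), whereas $e_\TT(T_{P_\alpha}\cF(m,r))$ is a polynomial in $\w$ alone. Moreover there are $m+r$ idempotents but only two stacky fixed points $B\mu_m$ and $B\mu_r$, so the idempotents are not fixed-point classes and Atiyah--Bott localization does not return $1/e_\TT(T\cF(m,r))$ for their norms; even in the limit $q\to 0$ the correct value carries orbifold corrections from the gerbe structure and the twisted sectors (the paper's limiting value at the $\mu_r$-point is $\Delta^\alpha(0)=r\mathbf{p}$, which is not the equivariant Euler class of the tangent space there). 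So the ``toric mirror identity'' $(\partial_y^2W_\TT)(P_\alpha)=e_\TT(T_{P_\alpha}\cF(m,r))$ to which you reduce the proposition is misstated.

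Even after correcting the limiting statement, the closing step ``extends by continuity'' does not work: what must be proved is an equality of nonconstant algebraic functions of $(q,\w)$, and verifying it at the large-radius boundary point, where the critical points degenerate to $Y=0$ and $Y=\infty$, gives no information away from that point. The paper's proof avoids both problems by running the Frobenius property in the other direction: since the products already agree, it suffices to match the trace functionals $g\mapsto(g,1)$ on a spanning set; it then shows by direct computation that this functional is independent of the equivariant parameters on both sides (on the A-side because the integral over the inertia stack is unchanged, on the B-side by a residue computation at $Y=0$ and $Y=\infty$ showing the $\tilde p$-correction cancels), and the non-equivariant equality is supplied by the non-equivariant mirror theorem. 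If you prefer to keep the idempotent route, you must actually compute $(\phi_\alpha,\phi_\alpha)$ on the A-side at general $q$ and compare it with the Hessian --- which is precisely the ``direct calculation by the residue formula'' the paper alludes to --- rather than import a fixed-point Euler class and a continuity argument.
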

\begin{proof}
Note that it only remains to show the identification of the residue pairing and Poincare pairings, which comes from direct calculation by the residue formula.

However we include a proof which recovers the pairing from its non-equivariant limit as usual.

We identify $p$ with $\tilde{p}$, and $Y$ with $X$.

Taking non-equivariant limit and applying mirror theorem, we have the isomorphism of non-equivariant limits $QH^\ast(\cF(m,r))\cong\mathrm{Jac}(W)$ as Frobenius algebras. Here the non-equivariant limit is given by $w_i \rightarrow 0$ and $\tilde{w}_i \rightarrow 0$. We now prove that $w_i$ and $\tilde{w_i}$ affect neither the residue pairing nor the Poincare pairing with $1$, which directly leads to the result.

Let $(\cdot,\cdot)$ denote the non-equivariant pairing, $(\cdot,\cdot)_{\mathbb{T}}$ denote the equivariant pairing. For an representative element $g(t) \in \oplus_{j=-m}^{r}\mathbb{C}t^j$, we have the following results:

\noindent On the A-side,
\[(g(X),1)=\dint_{\wedge\mathcal{X}}g(X)=\dint_{\wedge\mathcal{X}^{\mathbb{T}}}g(X)=(g(X),1)_{\mathbb{T}};\]
On the B-side, let $f_{\mathbb{T}}(Y)=\partial W_{\mathbb{T}}/\partial y$, and $f(Y)=\partial W/\partial y$. We notice that $f_{\mathbb{T}}(Y)=f(Y)-\tilde{p}$. By the residue formula,
\[(g(Y),1)=-\Res_{Y\to\infty} \dfrac{g(Y)}{f(Y)}\dfrac{dY}{Y}-\Res_{Y\to0} \dfrac{g(Y)}{f(Y)}\dfrac{dY}{Y}=\dfrac{1}{2\pi}\lim _{R \rightarrow \infty} \dint_0^{2\pi}\dfrac{g(Re^{i\theta})}{f(Re^{i\theta})}d\theta-\dfrac{1}{2\pi}\lim _{R \rightarrow 0} \dint_0^{2\pi}\dfrac{g(Re^{i\theta})}{f(Re^{i\theta})}d\theta.\]
Hence we have
\[(g(Y),1)-(g(Y),1)_{\mathbb{T}}=-\dfrac{1}{2\pi }\lim _{R \rightarrow \infty} \dint_0^{2\pi}\dfrac{g(Re^{i\theta})\cdot \tilde{p}}{f(Re^{i\theta})(f(Re^{i\theta})-\tilde{p})}d\theta+\dfrac{1}{2\pi }\lim _{R \rightarrow 0} \dint_0^{2\pi}\dfrac{g(Re^{i\theta})\cdot \tilde{p}}{f(Re^{i\theta})(f(Re^{i\theta})-\tilde{p})}d\theta=0.\]
And as a result $(g_1(Y),g_2(Y))_{\mathbb{T}}=(g_1(X),g_2(X))_{\mathbb{T}}$.
\end{proof}
\subsection{Basis}
\

Take $z_{\alpha}(\alpha=0,\cdots,m+r-1)$ to be the roots of $rz^{r} +  \sum_{\ell=1}^{r-1}\ell \tilde{q}_\ell z^\ell - \sum_{\ell=1}^{m} \ell \tilde{q}_{-\ell} z^{-\ell} -\tilde{p}=0$ with respect to $z$.
Assume that $z_\alpha$ are distinct.
Let $\phi_\alpha=\prod_{\beta\neq \alpha}\frac{X-z_\beta}{z_\alpha-z_\beta}$. Then $\phi_\alpha$ is a canonical basis, i.e., $\phi_\alpha\cdot\phi_\beta=\delta_{\alpha\beta}\phi_\alpha$.

\begin{lemma}
Let $z_i$ be all roots of $f(z)$. Assume $z_i$'s are distinct. Then $\phi_j=\prod_{i\neq j}\frac{z-z_i}{z_j-z_i}$ is a representative of the canonical basis of $\mathbb{C}[z]/\left<f(z)\right>$.
\end{lemma}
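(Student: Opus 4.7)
The plan is to verify directly that the classes $[\phi_j]$ form a complete orthogonal system of idempotents in the finite-dimensional algebra $R := \mathbb{C}[z]/\langle f(z)\rangle$, which has dimension $n := \deg f$. The key input is the Lagrange interpolation property $\phi_j(z_i) = \delta_{ij}$, which is immediate from the product definition of $\phi_j$, and everything else will follow from elementary degree-counting combined with the hypothesis that the $z_i$ are distinct.

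First I would establish orthogonality and idempotence in $R$. For $j \neq k$, the product $\phi_j \phi_k$ is a polynomial of degree at most $2(n-1)$ that vanishes at each of the $n$ distinct roots $z_0,\dots,z_{n-1}$ of $f$; hence it is divisible by $\prod_i (z - z_i)$, and therefore by $f$ (which differs from $\prod_i (z - z_i)$ only by a nonzero scalar). This gives $\phi_j \phi_k \equiv 0$ in $R$. For $j = k$, the same argument applied to $\phi_j^2 - \phi_j$ — which has degree at most $2(n-1)$ and vanishes at every $z_i$ since $\delta_{ij}^2 = \delta_{ij}$ — yields $\phi_j^2 \equiv \phi_j$ in $R$.

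Next I would check that the $n$ classes $[\phi_0],\dots,[\phi_{n-1}]$ are linearly independent in $R$. Each $\phi_j$ has degree $n-1 < n$, so any linear relation $\sum_j c_j \phi_j \equiv 0 \pmod{f}$ holds already as a polynomial identity; evaluating at $z = z_i$ then yields $c_i = 0$. Combined with $\dim_{\mathbb{C}} R = n$, this shows that $\{[\phi_j]\}$ is a basis, and together with the multiplication rules above it is the canonical idempotent basis claimed in the statement.

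The argument is essentially classical Lagrange interpolation recast algebraically, so I do not anticipate any real obstacle. The only things one needs to track carefully are the degree bounds $\deg \phi_j = n-1$ and $\deg(\phi_j\phi_k), \deg(\phi_j^2-\phi_j) \leq 2(n-1)$, together with the (harmless) observation that $f(z)$ and $\prod_i(z-z_i)$ generate the same ideal in $\mathbb{C}[z]$; both are routine.
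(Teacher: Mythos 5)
Your proposal is correct and rests on exactly the same key fact as the paper's (very terse) proof, namely the Lagrange interpolation property $\phi_j(z_i)=\delta_{ij}$ at the distinct roots; you simply spell out the idempotence, orthogonality, and linear-independence checks via degree counting that the paper leaves implicit. No issues.
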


\begin{proof}
Observing that the constructed $\phi_i$ is characterized by $\phi_i(z_j)=\delta_{ij}$, we obtain the lemma directly from the Lagrange interpolation formula.
\end{proof}

By direct calculations, we get $\Delta^\alpha=\frac{1}{(\phi_\alpha,\phi_\alpha)}=\frac{r\prod_{\beta\neq\alpha}(z_\alpha-z_\beta)}{z_\alpha^{m-1}}$.
Now consider several different bases for $QH_{\mathbb{T}}^\ast(\cF(m,r),\mathbb{C})$:
\begin{itemize}
  \item The natural basis $T_i=X^i$ and its dual basis $T^i$ with which $(T^i,T_j)=\delta^i_j$.
  \item The canonical basis $\phi_\alpha$ as defined above and its dual basis $\phi^\alpha=\Delta^\alpha(q)\phi_\alpha$.
  \item The normalized canonical basis $\hat{\phi}_\alpha=\sqrt{\Delta^\alpha(q)}\cdot\phi_\alpha$, and its dual basis $\hat{\phi}^\alpha=\hat{\phi}_\alpha$.
\end{itemize}
Regarding $\phi_\alpha$ as a function of $q$, we often write it as $\phi_\alpha(q)$, (same for $\hat{\phi}_\alpha,\phi^\alpha$ and $\hat{\phi}^\alpha$).
For an arbitrary point $pt\in QH_\TT^\ast(\cF(m,r),\mathbb{C})$, let $t^i,u^i,\bar{u}^i$ be coordinates such that
\[
pt=\sum t^iT_i=\sum u^i\phi_i(q)=\sum\bar{u}^i \phi_i(0).
\]
Regarding $t^i,u^i,\bar{u}^i$ as functions of $q$, we often write them as $t^i(q),u^\alpha(q),\bar{u}^\alpha(q)$. We often call $t^i(q)$ and $\bar{u}^\alpha(q)$ as flat coordinates, and $u^\alpha(q)$ as canonical coordinates. In our case, we set the flat basis $\{X^{i}\}$, with the coordinates $t^{i}(q)$ indexed by $i=0,1,\dots,m+r-1$.

We can explicitly write out the differential relation of $t^{i}(q)$ with $q_{i}$. Using the fact that:
\begin{equation*}
    \frac{\partial J}{\partial q_{i}}=\sum_a  \left<\!\left<1,X^{i},\frac{H_a}{z-\psi} \right>\!\right>_{0,2} H^a
\end{equation*}
for $i\geq 0$, and
\begin{equation*}
    \frac{\partial J}{\partial q_{i}}=\sum_a  \left<\!\left<1,q_{*}^{-i}X^{i},\frac{H_a}{z-\psi} \right>\!\right>_{0,2} H^a
\end{equation*}
for $i<0$.
Let $\bold{t}=\dsum_{i=0}^{m+r-1}t^{i}(q)X^{i}$, we have $\frac{\partial\bold{t}}{\partial q_{i}}=X^{i}$ for $i\geq 0$, and $\frac{\partial\bold{t}}{\partial q_{i}}=q_{*}^{-i}X^{i}$ for $i<0$.

\subsection{The A-model canonical coordinates and the $\Psi$-matrix}\label{sec:A-canonical}
In this subsection we calculate $\Psi$-matrix under the flat coordinates and canonical coordinates chosen in the previous subsection. 

By direct calculation, we have: 
\begin{eqnarray*}
\big(\frac{\partial}{\partial\bold{t}}\big)^{T}=&V(z_{0},z_{1},\dots,z_{m+r-1})\big(\frac{\partial}{\partial\bold{u}}\big)^{T}\\
\big({d\bold{t}}\big)^{T}=&V^{-1}(z_{0},z_{1},\dots,z_{m+r-1})^{T}\big({d\bold{u}}\big)^{T},
\end{eqnarray*}
where $V(z_{0},z_{1},\dots,z_{m+r-1})$ is the Vardermonde matrix with parameters $\{z_{0},z_{1},\dots,z_{m+r-1}\}$.
The above equations determine the canonical coordinates
$\bold{u}$ up to a constant in $\CC[\w]$.

We write $(\frac{\partial}{\partial\bold{t}})^{T}=V(z_{0},z_{1},\dots,z_{m+r-1})(\frac{\partial}{\partial\bold{u}})^{T}$. Then we have $({d\bold{t}})^{T}=V^{-1}(z_{0},z_{1},\dots,z_{m+r-1})^{T}({d\bold{u}})^{T}$.

For $\alpha\in \{0,1,\dots,m+r-1\}$ and $i\in \{0,1,\dots,m+r-1\}$, define $\Psi_i^{\,\ \alpha}$ by
$$
\frac{du^\alpha}{\sqrt{\Delta^\alpha(q)}} =\sum_{i=0}^{m+r-1} dt^i \Psi_i^{\,\ \alpha},
$$
and define the $\Psi$-matrix to be
$$
\Psi:= \left(\Psi_i^{\,\ \alpha} \right)_{i,\alpha}
$$
with row index $i$ and column index  $\alpha$. We can directly find out: 
$$\Psi=V(z_{0},z_{1},\dots,z_{m+r-1})\mathbf{diag}((\Delta^{0}(q))^{\frac{1}{2}},
\dots,(\Delta^{m+r-1}(q))^{\frac{1}{2}}).$$
This also explicitly gives  $\Psi_i^{\,\ \alpha}=z_{\alpha}^{i}(\Delta^{\alpha}(q))^{\frac{1}{2}}$.

Let $\Psi^{-1}$ be the inverse matrix of $\Psi$, so we have:
$$
\Psi^{-1}=\mathbf{diag}((\Delta^{0}(q))^{-\frac{1}{2}},
\dots,(\Delta^{m+r-1}(q))^{-\frac{1}{2}})V^{-1}(z_{0},z_{1},\dots,z_{m+r-1}).
$$
We can also explicitly write:
$$(\Psi^{-1})^{\,\,i}_{\alpha}:=(\Psi^{-1})_{\alpha,i}=(-1)^{i}(\frac{1}{r}z_{\alpha}^{m-1})(\Delta^{\alpha}(q))^{\frac{1}{2}}\dsum_{J\subset S\setminus\{\alpha\},|J|=m+r-i-1}z_{J}$$
where $z_{J}=\dprod_{\alpha\in J}z_{\alpha}$.
\subsection{The $\cS$-operator}\label{sec:A-S}
In this section we set the notations of $\cS$-operator and $S$-matrice under different basis. 

The $\cS$-operator is defined as follows.
For any cohomology classes $a,b\in H_\TT^*(\cF(m,r);\CC)$, 
$$
(a,\cS(b))=\llangle a,\frac{b}{z-\psi}\rrangle^{\cF(m,r),\TT}_{0,2}.
$$
The $T$-equivariant $J$-function is characterized by
$$
(J,a) = (1,\cS(a))
$$
for any $a\in H_T^*(\cF(m,r))$.

We consider several different (flat) bases for $H_\TT^*(\cF(m,r));\CC)$:
\begin{enumerate}
\item The classical canonical basis: $\phi_\alpha:=\phi_{\alpha}(q)_{q\to0}$
\item The basis dual to the classical canonical basis with respect to the $T$-equivariant Poincare pairing:
$\phi^\alpha =\Delta^{\alpha}(0)\phi_\alpha$
\item The normalized canonical basis $\hat{\phi}_\alpha=\Delta^{\alpha}(0)^{\frac{1}{2}}\phi_\alpha$.
\item The canonical basis $\phi_{\underline{\alpha}}:=\phi_{\alpha}(q)$.
\item The normalized canonical basis $\phi_{\underline{\alpha}}:=\Delta^{\alpha}(q)^{\frac{1}{2}}\phi_{\alpha}(q)$.
\end{enumerate}

For $\alpha,\beta\in \{0,1,\dots,r+m-1\}$, define:
\begin{align}
  S^{\ualpha}_{\ubeta}(z)&= (\phi_\alpha(q), \cS(\phi_\beta(q)))\\
  S^{\hat\ualpha}_{\ubeta}(z) &= (\hat\phi_\alpha(q), \cS(\phi_\beta(q)))\\
  S^{\ualpha}_{\hat\ubeta}(z) &= (\phi_\alpha(q), \cS(\hat\phi_\beta(q)))\\
  S^{\hat\ualpha}_{\hat\ubeta}(z) &= (\hat\phi_\alpha(q), \cS(\hat\phi_\beta(q)))
\end{align}

\section{Graph Sum Formula and the $R$-matrix}

We first demonstrate graph sum formulas for A-model and B-model. These subsections follow \cite{FLZ17}.

\subsection{Graph sum formula}
\

Given a connected graph $\Gamma$, we introduce the following notations:
\begin{itemize}
\item Let $V(\Gamma)$ denote the set of vertices in $\Gamma$.
\item Let $E(\Gamma)$ denote the set of edges in $\Gamma$.
\item Let $H(\Gamma)$ denote the set of half edges in $\Gamma$.
\item Let $L^0(\Gamma)$ denote the set of ordinary leaves in $\Gamma$.
\item Let $L^1(\Gamma)$ denote the set of dilation leaves in $\Gamma$.
\end{itemize}
By a half edge we mean either a leaf or an edge, together with a choice of one of the two vertices that it is attached to.
Note that the order of two vertices attached to an edge does not affect the graph sum formula in this paper.
With the above notations, we introduce the following labels:
\begin{itemize}
  \item Genus $g: V(\Gamma)\to \mathbb{Z}_{\geq 0}$;
  \item Marking $\beta: V(\Gamma)\to \{1,\cdots, m+r \}$;
  \item Height $k: H(\Gamma)\to \mathbb{Z}_{\geq 0}$.
\end{itemize}
Note that the marking on $V(\Gamma)$ induces a marking on $L(\Gamma)=L^o(\Gamma)\cup L^1(\Gamma)$ by $\beta(\ell)=\beta(v)$ where $\ell$ is attached to $v$.
Let $H(v)$ be the set of all half edges attached to $v$. Define the valence of $v\in V(\Gamma)$ as $\mathrm{val}(v)=|H(v)|$.
We say a labelled graph $\vec{\Gamma}=(\Gamma,g,\beta,r)$ is stable if $2g(v)-2+\mathrm{val}(v)> 0,\forall v\in V(\Gamma)$.
For a labelled graph $\vec{\Gamma}$, we define the genus by
$g(\vec{\Gamma})=\sum_{v\in V(\Gamma)} g(v)+ (|E(\Gamma)| - |V(\Gamma)| +1)$.
Define $\mathbf{\Gamma}_{g,N}(\cF(m,r))=\{\vec{\Gamma} \; \mathrm{ stable}: g(\vec{\Gamma})=g, |L^0(\Gamma)|=N \}$, and $\mathbf{\Gamma}(\cF(m,r))=\displaystyle\bigcup$$_{_{g,N}}\mathbf{\Gamma}_{g,N}(\cF(m,r))$.
Define the set of graphs $\tilde{\mathbf{\Gamma}}_{g,N}(\cF(m,r))$ in the same manner as $\mathbf{\Gamma}_{g,N}(\cF(m,r))$, except that the $N$ ordinary leaves are ordered.
We define weights for all leaves, edges and vertices and define the weight of a labeled graph $\vec{\Gamma}\in \mathbf{\Gamma}(\cF(m,r))$ to be the product of weights on all leaves, edges and vertices.
A graph sum formula expresses a quantity as sums of weights over all graphs.

\subsection{Givental's formula and the A-model graph sum}\label{A-model graph sum}

For a semisimple Frobenius algebra $V$, let $\{\phi_\alpha\}$ be its canonical basis. Under the identification of $V$ and $T_pV$, $\phi_\alpha$ corresponds to a tangent vector in $T_pV$.
Let $\{u_\alpha\}$ be Givental's canonical coordinates corresponding to $\phi_\alpha$, i.e., $\phi_\alpha=\frac{\partial}{\partial u^\alpha}$.
Let $U=\mathrm{diag} (u_1,\cdots,u_N)$.
Take $\Psi$ to be the base change of $\hat{\phi}_\alpha$ to $T_\alpha$, i.e., $\hat{\phi}_\alpha=\sum_\beta T_\beta\cdot\Psi_\alpha^\beta$.
By Givental's theorem\cite{Given1}, there exists a unitary $R(z)$ (i.e., $R(z)R^T(-z)=\mathrm{id}$) such that $S=\Psi R(z)e^{\frac{U}z}$ is a fundamental solution of the QDE, with $R(z)=\mathrm{id} +R_1 z+\cdots$ a formal power series in $z$.
Furthermore, $R(z)$ is unique up to a right multiplication of $\exp(a_1 z+a_3z^3+a_5z^5+\cdots)$, where $a_i$ are complex diagonal matrices.

The $\mathcal{S}$ operator is given by $(a,\mathcal{S}(b))=\left< \! \left< a, \frac{b}{z-\psi}\right> \! \right>_{0,2}^{\cF(m,r),\mathbb{T}}$. The quantization of the $\mathcal{S}$ operator relates the ancestor potential and the descendent potential via Givental's formula\cite{Given}, i.e.,
\[
D^{\cF(m,r),\mathbb{T}} (\ub) = \exp \big(F_1^{\cF(m,r),\mathbb{T}} \big) \hat{\mathcal{S}}^{-1}A^{\cF(m,r),\mathbb{T}}(\ub,\mathbf{t}).
\]
We now describe graph sum formulas for the ancestor potential $A^{\cF(m,r),T}(\ub,\mathbf{t})$ and the descendent potential with arbitrary primary insertions $F_{g,N}^{\cF(m,r),T}(\ub,\mathbf{t})$.

Let $\mathbf{u}=\mathbf{u^\alpha}T_\alpha$.
We assign weights to leaves, edges, and vertices of a labeled graph $\vec{\Gamma}\in \mathbf{\Gamma}(\cF(m,r))$ as
follows.
\begin{enumerate}
\item Ordinary leaves. To each $\ell\in L^o(\Gamma)$  we assign
\[(\mathcal{L}^{\ub})_k^\beta(\ell) = [z^k]( \sum_{\alpha=0}^{m+r-1}\frac{\ub^\alpha(z)}{\sqrt{\Delta^\alpha (q)}}
R_\alpha^\beta (-z)).\]
\item Dilaton leaves. To each $\ell\in L^1(\Gamma)$ we assign
\[(\mathcal{L}^1)_k^\beta(\ell) = [z^{k-1}]( - \sum_{\alpha=0}^{m+r-1}\frac{1}{\sqrt{\Delta^\alpha (q)}}
R_\alpha^\beta (-z)).\]
\item Edges. To an edge connecting vertices marked by $\alpha$ and $\beta$, with heights $k$ and $\ell$ at the corresponding half-edges,
we assign
\[\mathcal{E}_{k,\ell}^{\alpha,\beta}(e)=[z^kw^\ell](\frac1{z+w}(\delta_{\alpha,\beta}-\sum_{\gamma=0}^{m+r-1}
R_\gamma^\alpha(-z)R_\gamma^\beta(-w))).\]
\item Vertices. To a vertex with genus $g$, marking $\beta$ and half-edges of heights $k_1\cdots k_N$, we assign
    \[\mathcal{V}^\beta_g(v) = (\sqrt{\Delta^\beta(q)})^{2g-2+N} \left<\prod_{j=1}^N\tau_{k_j}\right>_g.\]
\end{enumerate}
Hence the weight of $\vec{\Gamma}\in \mathbf{\Gamma}(\cF(m,r))$ is:
\[
w(\vec{\Gamma}) = \dprod_{v\in V (\Gamma)} \mathcal{V}^{\beta(v)}_{g(v)}(v)
\dprod_{e\in E(\Gamma)} \mathcal{E}_{k(h_1(e)),k(h_2(e))}^{\beta(v_1(e)),\beta(v_2(e))} (e) \cdot \dprod_{\ell\in L^0(\Gamma)} (\mathcal{L}^\ub)_{k(\ell)}^{\beta(\ell)}(\ell) \dprod_{\ell\in L^1(\Gamma)} (\mathcal{L}^1)_{k(\ell)}^{\beta(\ell)}(\ell).
\]
Then it holds
\[\begin{split}
\log(A^{\cF(m,r),T}(\ub,\mathbf{t}))&= \dsum_{\vec{\Gamma}\in \mathbf{\Gamma}(\cF(m,r))} \dfrac{\hbar^{g(\vec{\Gamma})-1}w(\vec{\Gamma})}{\mathrm{Aut}(\vec{\Gamma})}= \dsum_{g\geq 0}\hbar^{g-1} \dsum_{N\geq 0}\dsum_{\vec{\Gamma}\in \mathbf{\Gamma}_{g,N}(\cF(m,r))} \dfrac{w(\vec{\Gamma})}{\mathrm{Aut}(\vec{\Gamma})}.
\end{split}
\]

We define a new weight if we have $N$ ordered variables $(\ub_1,\cdots,\ub_N)$ and $N$ ordered ordinary leaves $\{\ell_1,\cdots,\ell_N\}$.
Let
\[
\begin{split}
&S_{\underline{\hat{\alpha}}}^{\hat{\underline{\gamma}}}(z)=(\hat{\phi}_\gamma(q), \mathcal{S}(\hat{\phi}_\alpha(q))),\ \ub_j=\sum_{a\geq 0}(u_j)_az^a=\sum_\alpha \bold{u_j^\alpha} T_\alpha,\\
&(\stackrel{\circ}{\mathcal{L}^{\ub_j}})_k^\beta(\ell_j) = [z^k]( \sum_{\alpha,\gamma=0}^{m+r-1}\frac{\ub_j^\alpha(z)}{\sqrt{\Delta^\alpha (q)}} S_{\underline{\hat{\alpha}}}^{\hat{\underline{\gamma}}}(z)
R(-z)_\gamma^\beta).
\end{split}
\]
Let $\stackrel{\circ}{w}({\vec{\Gamma}})$ be the corresponding weight, then it holds similarly
\[\begin{split}
\dsum_{g\geq 0}\hbar^{g-1} \dsum_{N\geq 0} F_{g,N}^{\cF(m,r),T}(\bu_1,\cdots,\bu_N,\bold{t})= \dsum_{g\geq 0}\hbar^{g-1} \dsum_{N\geq 0} \dsum_{\vec{\Gamma}\in \mathbf{\Gamma}_{g,N}(\cF(m,r))} \dfrac{\stackrel{\circ}{w}(\vec{\Gamma})}{|\mathrm{Aut}(\vec{\Gamma})|}.
\end{split}
\]

\subsection{Eynard-Orantin recursion and the B-model graph sum}

\

Let $\omega_{g,N}$ be defined recursively by the Eynard-Orantin topological recursion
\[
\omega_{0,1}=0,\quad \omega_{0,2}=B(Y_1,Y_2)=\dfrac{dY_1\otimes dY_2}{(Y_1-Y_2)^2}.
\]
When $2g-2+N>0$, we have
\[\begin{split}
\omega_{g,N}(Y_1,\cdots,Y_N) = & \dsum_{\alpha=1}^{m+r} \mathrm{Res}_{Y\to p^\alpha} \dfrac{-\int_{\xi=Y}^{\hat{Y}} B(Y_n,\xi)}{2(\log (Y)-\log (\hat{Y}))dW_{\bT}} \big( \omega_{g-1,N-1}(Y,\hat{Y},Y_1,\cdots,Y_{N-1}) \\
& + \dsum_{g_{{}_1}+g_{{}_2}=g} \dsum_{\stackrel{I\cup J=\{1,\cdots,N-1 \}}{I\cap J\neq \emptyset}} \omega_{g_{{}_1},|I|+1}(Y,Y_I) \cdot\omega_{g_{{}_2},|J|+1}(\hat{Y},Y_J) \big),
\end{split}
\]
where $Y\neq P_\alpha$ and $\hat{Y}\neq Y$ are in a small neighborhood of $P_\alpha$ such that $W_\TT(\hat{Y}) \neq W_\TT(Y)$.

By definition, it holds that $x=W_\TT(e^y)$.
Near any critical point $v^\alpha(=\log{P_{\alpha}})$, we define $\zeta_\alpha,h_k^\alpha$ to satisfy
$x=u^\alpha-\zeta_\alpha^2, \ y=v^\alpha-\sum_{k=1}^\infty h_k^\alpha \zeta_\alpha^k$.
Expand $B(P_\alpha,P_\beta)$ in terms of $\zeta_i$ as
\[B(P_\alpha,P_\beta)=\big( \frac{\delta_{\alpha,\beta}}{(\zeta_\alpha -\zeta_\beta)^2}+ \sum_{k,\ell\geq 0} B_{k,\ell}^{\alpha,\beta}\zeta_\alpha^r\zeta_\beta^\ell\big) d\zeta_\alpha\otimes d\zeta_\beta.\]

Let
\[
\begin{split}
&\check{B}_{k,\ell}^{\alpha,\beta}=\frac{(2k-1)!!(2\ell-1)!!}{2^{k+\ell+1}} B_{k,\ell}^{\alpha,\beta},\quad
\check{h}_k^\alpha =\frac{(2k-1)!!}{2^{k-1}}h_{2k-1}^\alpha,\\
&d\xi_k^\alpha=(2k-1)!!2^{-d}Res_{P'\mapsto P_\alpha}B(P,P')(\sqrt{-1}\zeta_\alpha)^{-2d-1}.
\end{split}
\]

The B-model invariants $\omega_{g,N}$ can be expressed as graph sums. Given a labelled graph $\vec{\Gamma}\in \mathbf{\tilde \Gamma}_{g,N}(\cF(m,r))$ with $L^o(\Gamma)=\{\ell_1,\cdots,\ell_N\}$. Define the vertex factors to be $$\tilde{\mathcal{V}}^{\alpha(v)}_{g(v)}(v)=\big(\dfrac{h_1^{\alpha(v)}}{\sqrt{2}} \big)^{2-2g-\mathrm{val}(v)} \big<\dprod_{h\in H(v)} \tau_{k(h)}\big>_{g(v)}.$$.We define its weight to be
\[
\begin{split}
\tilde{w}(\vec{\Gamma}) = & (-1)^{g(\vec{\Gamma})-1+N} \dprod_{v\in V (\Gamma)} \tilde{\mathcal{V}}^{\alpha(v)}_{g(v)}(v)
\dprod_{e\in E(\Gamma)} \check{B}_{k(e),\ell(e)}^{\alpha(v_1(e)),\alpha(v_2(e))} (e) \\ & \cdot \dprod_{j=1}^N \dfrac1{\sqrt{-2}}d\xi_{k(\ell_j)}^{\alpha(\ell_j)}(Y_j) \dprod_{\ell\in L^1(\Gamma)} \big( \dfrac1{\sqrt{-2}} \big)\check{h}_{k(\ell)}^{\alpha(\ell)}.
\end{split}
\]

We cite here the Theorem 3.7 in \cite{DOSS}.
\begin{thms}
For $2g-2+N>0$, it holds
\[
\omega_{g,N}=\dsum_{\vec{\Gamma}\in \mathbf{\Gamma}_{g,N}(\cF(m,r))} \dfrac{\tilde{w}(\vec{\Gamma})}{|\mathrm{Aut}(\vec{\Gamma})|}.
\]
\end{thms}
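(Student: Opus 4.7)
The statement is an instance of the Dunin-Barkowski--Orantin--Shadrin--Spitz theorem \cite{DOSS} applied to the spectral curve $(\CC^\ast, W_\TT, y, B)$ underlying the football mirror. My plan is to verify the hypotheses of that theorem for our specific setting and then outline the inductive argument that converts the Eynard-Orantin recursion into the prescribed graph sum.

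First, I would check that the local input data match the DOSS framework. Since the critical points $P_\alpha$ of $W_\TT$ are assumed distinct in Section~\ref{sec:A-canonical}, the Hessian of $W_\TT$ is nondegenerate at each $P_\alpha$, so we can introduce a local coordinate $\zeta_\alpha$ with $x = W_\TT(Y) = u^\alpha - \zeta_\alpha^2$. The coefficients $h_k^\alpha$ arise from the Taylor expansion of $y = \log Y$ in $\zeta_\alpha$, and the coefficients $B_{k,\ell}^{\alpha,\beta}$ from expanding the Bergman kernel $B(Y_1,Y_2)=dY_1\otimes dY_2/(Y_1-Y_2)^2$ jointly in $\zeta_\alpha, \zeta_\beta$ near $(P_\alpha, P_\beta)$. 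Semisimplicity of $\mathrm{Jac}(W_\TT)$ ensures that these expansions give a well-defined set of weights, and the canonical coordinates $u^\alpha$ identified here agree with the A-model canonical coordinates of Section~\ref{sec:A-canonical}.

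Second, the proof proper proceeds by induction on $2g-2+N$. The base cases $(g,N)=(0,3)$ and $(1,1)$ are obtained by expanding the EO recursion once and identifying the unique one-vertex contribution with $\tilde{\mathcal{V}}^\alpha_g(v)$ times the leaf factors $d\xi_{k(\ell_j)}^{\alpha(\ell_j)}(Y_j)/\sqrt{-2}$. For the inductive step, apply the EO recursion to $\omega_{g,N}$. Each residue $\mathrm{Res}_{Y\to P_\alpha}$ introduces a new vertex of marking $\alpha$; expanding the kernel
\[
\frac{-\int_{\xi=Y}^{\hat Y} B(Y_N,\xi)}{2(\log Y-\log \hat Y)\, dW_\TT}
\]
in powers of $\zeta_\alpha$ produces the vertex factor $\bigl(h_1^{\alpha}/\sqrt 2\bigr)^{2-2g-\mathrm{val}(v)}$, the dilaton leaves coming from the higher $h_k^\alpha$, and (together with the $B(Y_N,\xi)$ factor and the inductive hypothesis applied to $\omega_{g-1,N-1}$ and to $\omega_{g_1,|I|+1}\,\omega_{g_2,|J|+1}$) an edge weight $\check B^{\alpha,\beta}_{k,\ell}$ linking the new vertex to the subgraphs produced by the smaller invariants. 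External leaf weights $d\xi_k^\alpha(Y_j)/\sqrt{-2}$ come from the expansion of $\omega_{0,2}=B(Y_j,\cdot)$ at $P_\alpha$, and the unordered sum over graphs, divided by $|\mathrm{Aut}(\vec\Gamma)|$, corresponds to the multinomial coefficients produced by the decomposition of $I\cup J=\{1,\ldots,N-1\}$ in the recursion.

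The main obstacle is the combinatorial bookkeeping in matching residue contributions with graph weights: one must check that the $(2k-1)!!$ factors, the powers of $\sqrt{-2}$ on ordinary and dilaton leaves, the double factorials in $\check B$ and $\check h_k^\alpha$, and the overall sign $(-1)^{g(\vec\Gamma)-1+N}$ all balance correctly between the EO kernel expansion and the prescribed weight $\tilde w(\vec\Gamma)$. This reduces to the local combinatorial identity established in \cite{DOSS}, which I would apply verbatim since no geometric input beyond nondegeneracy of the critical points is required; in our setting this is already guaranteed by the distinctness of the $z_\alpha$ assumed in Section~\ref{sec:A-canonical}.
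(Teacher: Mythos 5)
Your proposal is correct and follows essentially the same route as the paper: the paper establishes this statement simply by citing Theorem 3.7 of \cite{DOSS}, and your argument likewise reduces to that theorem after checking that the spectral-curve data (distinct nondegenerate critical points of $W_\TT$, the local coordinates $\zeta_\alpha$, the expansion coefficients $h_k^\alpha$ and $B_{k,\ell}^{\alpha,\beta}$) fit the DOSS framework. The additional inductive sketch of how the Eynard--Orantin recursion produces the graph weights is a faithful outline of the DOSS proof itself rather than a new argument, so no discrepancy arises.
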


\subsection{A-model large radius limit}

\

In 3.4 and 3.5, we assume $p$ and $z$ to be negative real numbers.

We denote  $Q_1$ as the chart $\cF(m,r)\setminus\{[0:1]\}$, and $Q_2$ as the chart $\cF(m,r)\setminus\{[1:0]\}$.
By Tseng \cite{Tseng10} (see also Zong \cite{Zong}), we have
\[
\left.\big( R_j^i\big)\right|_{t=0,q=0} = \mathrm{diag} \big( (P_\sigma)_j^i\big), \;\mathrm{on} \; Q_\sigma,  \;\mathrm{ for }\; \sigma=1,2;
\]
\[
(P_\sigma)_j^i =\dfrac1{|G_\sigma|}\dsum_{(h)}\chi_{\alpha_j}(h)\chi_{\alpha_i}(h^{-1})\cdot \exp\left[ \dsum_{t=1}^\infty \dfrac{(-1)^t}{t(t+1)}B_{t+1}(c_\sigma(h)) \big( \dfrac{z}{w_\sigma}\big)^t \right].
\]
Let $\sigma=1$. Then we have the following:
\begin{itemize}
  \item $G=G_1=\mathbb{Z}/r\mathbb{Z}$.
  \item $V_{\alpha_{1+j}}=\mathbb{C}$, with $\mathbb{Z}/r\mathbb{Z}$ action: $\bar{t} \circ z=e^{2\pi i \frac{tj}{r}}\cdot z$. Then $\chi_{\alpha_{1+j}}(\bar{t})=e^{2\pi i \frac{tj}{r}}$.
  \item $T=\{ (\lambda_1,\lambda_2)\}$ acts on $Q_1$ by $(\lambda_1,\lambda_2)\circ z = ( \lambda_2^{\frac{m}{r}}\lambda_1)z$, i.e., $w_1= \lambda_2^{\frac{m}{r}}\lambda_1$.
  \item $c_\sigma( e^{2\pi i\frac{t}{r}})=\frac{t}{r}$, where $0\leq t <r$.
\end{itemize}

By \cite{Kacz11} we have
$\log \Gamma(z+s)=(z+s-\frac12) \log z -z +\frac12 \log2\pi +\sum_{t=1}^\infty \frac{(-1)^t \cdot B_{t+1}(s)}{t(t+1)}\frac1{z^t}$.
Let $\lambda=\frac{\lambda_1\lambda_2^{m/r}}{z}$, $\omega_{\alpha,\beta}=e^{\frac{2\pi i(\alpha-\beta)}{r}}$. Then we have
\[
(P_1)_\beta^\alpha = \dfrac{e^\lambda}{r\sqrt{2\pi\lambda}}\dsum_{h=0}^{r-1} (\omega_{\alpha,\beta})^{-h}\Gamma\big(\lambda+\dfrac{h}{r}\big) \lambda^{1-\lambda-\frac{h}{r}}.
\]

\subsection{B-model large radius limit}

\

Next we calculate the B-model $R$-matrix $\tilde{R}_\alpha^\beta$ while $\tilde{q}\to 0$.
Let
\[\tilde{R}_\alpha^\beta(\tilde{q})=\frac{\sqrt{-z}}{2\sqrt{\pi}}\dint_{\gamma_{\beta}}e^{\frac{W_\TT(P_{\beta})-W_\TT(Y)}z}d\xi_{\alpha,0},
\]
where:
\begin{itemize}
  \item $P_\beta$ are critical points of $W_\TT$ for $\beta=0,\cdots,m+r-1$, and in $q\to0$ case we assume $\beta=0,\cdots,r-1$;
  \item for a fixed $\beta$, $\gamma_{\beta}=W_\TT^{-1}(W_\TT(P_\beta)+[0,+\infty))$;
  \item $\xi_{\alpha,0}=\frac{1}{\sqrt{-1}}\sqrt{\frac{2}{\Delta^{\alpha}(q)}}\frac{P_{\alpha}}{Y-P_{\alpha}}$.
\end{itemize}
Noticing that $\tilde{R}_\alpha^\beta(\tilde{q})$ only involves terms of differences, it remains unchanged if we add a constant to $W_\TT$. More specifically, we replace $W_\TT$ by $W_\TT-(\sum_{\ell=1}^{r-1} w_\ell\log \tilde{q}_\ell + \sum_{\ell=1}^{m} w_{-\ell}\log \tilde{q}_{-\ell})$.

Further let $(\tilde{R}_{1})_{\alpha}^{\beta}$ be the submatrix of the first $r$ columns and $r$ rows of $\lim_{\tilde{q}\to 0}\tilde{R}_\alpha^\beta(\tilde{q})$. It may be computed from $\int_{\gamma_{\beta}}\exp(\frac{\widetilde{W_\TT}(Y)-\widetilde{W_\TT}(p^\beta)}z) \theta_\alpha$ with $\widetilde{W_\TT}=Y^{r}+p\log Y$.
In this new set-up, we have the following results:
\begin{itemize}
  \item $P_\beta$s are roots of $0=\frac{\partial \widetilde{W_\TT}}{\partial \log Y}=r( Y^{r}-\dfrac{p}{r})$. This gives $P_\beta=\sqrt[n]{\frac{p}{r}}\cdot e^{2\pi i \frac\beta{r}}$.
  \item We claim that $\gamma_{\beta}=(0,+\infty)\cdot P_\beta$.
  In fact, direct calculation shows that
  $\widetilde{W_\TT}(t\cdot P_\beta)-\widetilde{W_\TT}(P_\beta)
  =\frac{p}{r}(t^{r}-1-r\log t)\geq 0$.

  \item
  With $\Delta^{\alpha}(0)=rp$.
  ,we get $d\xi_{\alpha,0}=\frac{-P_\alpha}{(Y-P_{\alpha})^{2}}\cdot \sqrt{\frac{2}{rp}}dY$.
\end{itemize}

Let $\mu=\frac{p}{rz},\omega_{\alpha,\beta}=e^{\frac{2\pi i(\alpha-\beta)}{r}},s=\mu t^{r}$.
Integrating by parts, the equation equals:
\[\tilde{R}_{\alpha}^{\beta}(\tilde{q})_{\tilde{q}\to0}=\frac{e^{\mu}}{r\sqrt{2\pi\mu}}\int_{s=0}^{+\infty} e^{-s} \big(\frac{s}\mu \big)^{\mu} d\frac{1}{ \big(\frac{s}\mu \big)^{\frac1m}-\omega_{\alpha,\beta}}
=  \frac{e^{\mu}}{r\sqrt{2\pi\mu}}  \dsum_{h=0}^{r-1} \Gamma\big(\mu+\dfrac{h}{r} \big) \cdot \mu^{1-\mu-\frac{h}{r}}\cdot (\omega_{\alpha,\beta})^{-h}.\]
This directly gives the result.

\begin{prop}
It holds that
$(\tilde{R}_1)_\alpha^\beta = (P_1)_\beta^\alpha, \; \mathrm{for }\; 0\leq \alpha,\beta \leq r-1$,
if we identify $\lambda$ and $\mu$.
\end{prop}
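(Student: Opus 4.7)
The strategy is to compare directly the two explicit closed-form expressions already computed in Sections 3.4 and 3.5, since no further conceptual input is needed. On the A-model side, Tseng's formula together with the standard asymptotic expansion
\[
\log \Gamma(z+s) = \left(z+s-\tfrac12\right)\log z - z + \tfrac12\log 2\pi + \sum_{t=1}^\infty \frac{(-1)^t B_{t+1}(s)}{t(t+1)}\frac{1}{z^t}
\]
was used to rewrite $(P_1)_\beta^\alpha$, restricted to the chart $Q_1$ where $G_1 = \mathbb{Z}/r\mathbb{Z}$ with characters $\chi_{\alpha_{1+j}}(\bar t) = e^{2\pi i tj/r}$, as the Gauss-type sum
\[
(P_1)_\beta^\alpha = \frac{e^{\lambda}}{r\sqrt{2\pi\lambda}}\sum_{h=0}^{r-1}(\omega_{\alpha,\beta})^{-h}\,\Gamma\!\left(\lambda+\tfrac{h}{r}\right)\lambda^{\,1-\lambda-h/r}.
\]

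On the B-model side, after replacing $W_\TT$ by $\widetilde{W_\TT}(Y)=Y^r + p\log Y$ (allowed since $\tilde R_\alpha^\beta$ only depends on differences of the superpotential) and parametrizing the Lefschetz thimble $\gamma_\beta = (0,+\infty)\cdot P_\beta$ via $t \mapsto t P_\beta$, the change of variables $s=\mu t^r$ collapses the Laplace integral defining $\tilde R_\alpha^\beta(\tilde q)|_{\tilde q\to 0}$ onto the same sum
\[
(\tilde R_1)_\alpha^\beta = \frac{e^{\mu}}{r\sqrt{2\pi\mu}}\sum_{h=0}^{r-1}(\omega_{\alpha,\beta})^{-h}\,\Gamma\!\left(\mu+\tfrac{h}{r}\right)\mu^{\,1-\mu-h/r},
\]
using the normalization $d\xi_{\alpha,0} = \tfrac{-P_\alpha}{(Y-P_\alpha)^2}\sqrt{2/(rp)}\,dY$ together with $\Delta^\alpha(0)=rp$ and $P_\beta = (p/r)^{1/r}e^{2\pi i\beta/r}$.

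Having both expressions in hand, the proof reduces to a side-by-side inspection. I would first check that the index conventions match: the subscript of $(\tilde R_1)_\alpha^\beta$ on the B-model side and the superscript of $(P_1)_\beta^\alpha$ on the A-model side both couple to the same root-of-unity exponent through $\omega_{\alpha,\beta}=e^{2\pi i(\alpha-\beta)/r}$, so that the transposition built into the statement is consistent with the placement of the characters $\chi_{\alpha_i}(h)\chi_{\alpha_j}(h^{-1})$ in Tseng's formula. I would then verify that the prefactor $e^{\bullet}/(r\sqrt{2\pi \bullet})$ and the exponent $1-\bullet - h/r$ on $\mu$ versus $\lambda$ coincide after the identification $\lambda = \mu$, which is forced by $\lambda = \lambda_1\lambda_2^{m/r}/z$ on the A-side and $\mu = p/(rz)$ on the B-side under the mirror matching $p \leftrightarrow r\lambda_1\lambda_2^{m/r}$ along $Q_1$.

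The only mild obstacle is bookkeeping the normalizations and the role of the index transposition; there is no genuinely hard step, as both computations have been carried out in closed form in the previous two subsections and each produces the same Gauss-Gamma sum over $h=0,\ldots,r-1$. Once these two expressions are aligned, the proposition follows by inspection under $\lambda=\mu$.
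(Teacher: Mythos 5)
Your proposal is correct and follows essentially the same route as the paper: the paper's own ``proof'' consists precisely of the two closed-form computations in Sections 3.4 and 3.5 (Tseng's formula plus the Stirling expansion on the A-side, and the Laplace integral over the thimble $\gamma_\beta=(0,+\infty)\cdot P_\beta$ with the substitution $s=\mu t^{r}$ on the B-side), after which the identity is read off by matching the two Gauss--Gamma sums under $\lambda=\mu$. Your additional remarks on the index transposition via $\omega_{\alpha,\beta}$ and on the identification $p\leftrightarrow r\lambda_1\lambda_2^{m/r}$ are consistent with the paper's conventions and fill in the brief ``this directly gives the result'' step.
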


\subsection{The general case}

\

It is obvious that $\tilde{R}_\alpha^\beta(q)|_{q=0}=0=\tilde{R}_\beta^\alpha (q)|_{q=0}$ for $1\leq \alpha\leq m < m+1\leq \beta\leq m+r$.
Since both $\Psi Re^{\frac{U}z}$ and  $\Psi \tilde{R}e^{\frac{U}z}$ are solutions to the QDE on the Frobenius algebra $QH^\ast (\cF(m,r))\cong \mathrm{Jac}(\cF(m,r))$, we have by Givental's theorem$\tilde{R}(\tilde{q})=R(q)\cdot A$, where $A=\exp(a_1 z+a_3z^3+a_5z^5+\cdots)$, with $a_i$'s diagonal matrices, and their diagonal entries are scalar.
Considering the submatrix consisting of the first $n$ columns and the first $n$ rows of $R$ and $\tilde{R}$, by the previous proposition we have $\lim_{q\to 0}P_1(q)=\tilde{R}=\lim_{q\to 0}P_1(q)\big|_{r\times r}$.
Comparing the diagonal elements, we find the submatrix of first $n$ columns and $n$ rows of $A$ as $A|_{r\times r}=I_{r\times r}$.
Similarly, moving to the other chart we have $A|_{m\times m}=I_{m\times m}$. Hence $A=I$. This gives the following proposition.
\begin{prop}\label{A-B R-matrix equivalence}
It holds that
\[R(q)=\tilde{R}(\tilde{q}).\]
\end{prop}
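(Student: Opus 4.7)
The plan is to exploit Givental's uniqueness theorem for $R$-matrices of semisimple Frobenius manifolds and pin down the ambiguity by comparing large radius limits on each of the two charts of $\mathcal{F}(m,r)$. Since both $\Psi R(q) e^{U/z}$ and $\Psi \tilde R(\tilde q)e^{U/z}$ are fundamental solutions of the same QDE on the semisimple Frobenius manifold $QH^{*}_{\TT}(\mathcal{F}(m,r),\CC)\cong\mathrm{Jac}(W_{\TT})$, Givental's theorem gives
\[
\tilde R(\tilde q) \;=\; R(q)\cdot A,\qquad A=\exp\!\bigl(a_1 z + a_3 z^3 + a_5 z^5+\cdots\bigr),
\]
where each $a_{2k+1}$ is a diagonal matrix whose entries depend only on the canonical coordinate index (not on $q$). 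So it suffices to prove $A=I$, and since $A$ is diagonal it is enough to show that every one of its $m+r$ diagonal entries equals $1$.

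To do this I would exploit the two torus-invariant affine charts. On $Q_1 = \mathcal{F}(m,r)\setminus\{[0:1]\}$, of the $m+r$ canonical coordinates only the $r$ indexed by characters of the $\mu_r$-stabilizer survive in the limit $q\to 0$, and by the Tseng/Zong formula the top-left $r\times r$ block of $R(q)$ tends to the explicit matrix $(P_1)_{\beta}^{\alpha}$ recorded in Section 3.4. The B-model computation of Section 3.5 (Laplace integral against the truncated superpotential $\widetilde{W}_{\TT}=Y^r+p\log Y$ with steepest-descent contours $\gamma_\beta$) yields the same limit for the corresponding block of $\tilde R(\tilde q)$, which is exactly the content of the preceding Proposition. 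Because the relation $\tilde R = R\cdot A$ restricts to this block as $\tilde R\big|_{r\times r}=R\big|_{r\times r}\cdot A\big|_{r\times r}$ (the block structure is preserved since $A$ is diagonal), taking $q\to 0$ on both sides and comparing the two expressions forces the first $r$ diagonal entries of $A$ to all equal $1$.

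An entirely analogous argument is then carried out on the second chart $Q_2=\mathcal{F}(m,r)\setminus\{[1:0]\}$, where $\mathcal{F}(m,r)$ degenerates to $[\CC/\mu_m]$; the bottom-right $m\times m$ block of $R(q)$ and of $\tilde R(\tilde q)$ both converge to the same explicit matrix built from $\Gamma$-asymptotics of $\log\Gamma$, by symmetry of the set-up. This trivializes the remaining $m$ diagonal entries of $A$. Since the two charts together exhaust all $m+r$ critical points of $W_{\TT}$, every diagonal entry of $A$ is $1$, and diagonality gives $A=I$, proving $R(q)=\tilde R(\tilde q)$.

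The genuinely delicate step is not the two limit computations in isolation, which are already established, but the book-keeping that ensures the same indexing of critical points (and therefore the same labeling of diagonal entries of $A$) is used on both sides of $\tilde R=R\cdot A$ on each chart. This is controlled by the natural identification of the $r$ components of the inertia stack supported at $[1:0]$ with characters of $\mu_r$ on the A-side and with the $r$ roots $P_\beta=(p/r)^{1/r} e^{2\pi i\beta/r}$ of $\partial\widetilde{W}_{\TT}/\partial y$ on the B-side; as long as these identifications are matched consistently between Sections 3.4 and 3.5 the argument is rigid and the conclusion $A=I$ follows.
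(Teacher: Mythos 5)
Your proposal is correct and follows essentially the same route as the paper: invoke Givental's uniqueness to write $\tilde R(\tilde q)=R(q)\cdot A$ with $A=\exp(a_1z+a_3z^3+\cdots)$ diagonal and $q$-independent, then kill the ambiguity by comparing the $q\to 0$ limits of the $r\times r$ block on the chart $Q_1$ (using the preceding proposition $(\tilde R_1)_\alpha^\beta=(P_1)_\beta^\alpha$) and of the $m\times m$ block on the chart $Q_2$, concluding $A=I$. Your extra remark about consistently matching the indexing of critical points with inertia-stack components is a reasonable point of care but does not change the argument, which is the one the paper gives.
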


\section{All genus equivariant mirror symmetry}

\subsection{Calculations of the graph sum formula}
\

First observe that
$\sqrt{\Delta^\alpha} h_1^\alpha=\sqrt{2}$.

Let
\[
\begin{split}
&\xi_{\alpha,0}=\frac1{\sqrt{-1}}\sqrt{\frac2{\Delta^\alpha}} \frac{P_\alpha}{Y-P_\alpha},
\theta=\frac{d}{dW_{\bT}},
W_k^\alpha =d((-1)^k\theta^k(\xi_{\alpha,0})),\\
&(\tilde{u}_j)_k^\alpha=[z^k]\sum_\beta S^{\hat{\underline{\alpha}}}_{\hat{\underline{\beta}}}(z)\frac{u_j^\beta(z)}{\sqrt{\Delta^\beta(q)}}.
\end{split}
\]
Note that $d\xi_{\alpha,0}=d\xi_0^\alpha$. The last equation is also equivalent to 
\begin{equation*}
    (u_j)_k^\alpha=[z^k]\sqrt{\Delta^{\alpha}(q)}\sum_\beta S_{\hat{\underline{\alpha}}}^{\hat{\underline{\beta}}}(-z)\tilde{u}_j^\beta(z)
\end{equation*}
This comes from the unitary condition $R(z)R^{T}(-z)=I$. Written in the matrix form, we have:
\begin{equation}
    \bu_{j}=\bold{\Delta}^{\frac{1}{2}}(\bold{S}_{\hat\ubeta}^{\hat\ualpha})^{T}\bold{\tilde{\bu}_{j}}
\end{equation}
where $\bu_{j}=(u_{j}^{\alpha})^{T}$, and $\tilde{\bu}_{j}=(\tilde{u}_{j}^{\hat\alpha})^{T}$.
This also implies 
\begin{equation*}
\dsum_{\alpha}\bu^{\alpha}\phi_{\alpha}=(\phi_{\alpha})^{T}\bold{\Delta}(\bold{S}^{\ualpha}_{\ubeta}(-z))^{T}\bold{\Delta}^{\frac{1}{2}}\tilde{\bu}^{\hat\beta}=\dsum_{\beta}-z\frac{\partial J(-z)}{\partial u^{\beta}}\tilde{\bu}^{\beta}
\end{equation*}
\begin{thm}\label{main}
By identifying $W_k^\alpha(Y_j)$ and $\sqrt{-2}(\tilde{u}_j)_k^{\hat\alpha}$, we have
\[
\omega_{g,N}=(-1)^{g-1+N}F_{g,N}^{\cF(m,r),\TT} (\mathbf{u_1,\cdot\cdot\cdot,u_N,t})
\]
for $2g-2+n>0$ and $N>0$.
\end{thm}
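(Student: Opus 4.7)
The plan is to match, term by term, the graph sum expression for $F_{g,N}^{\cF(m,r),\TT}$ given in Section \ref{A-model graph sum} against the graph sum expression for $\omega_{g,N}$ coming from the DOSS theorem cited in the previous section. Both sums run over the same set $\mathbf{\Gamma}_{g,N}(\cF(m,r))$ of stable decorated graphs, so it suffices to show that, under the stated identification $W_k^\alpha(Y_j) = \sqrt{-2}\,(\tilde{u}_j)_k^{\hat{\alpha}}$ and the substitution $R(q)=\tilde R(\tilde q)$ from Proposition \ref{A-B R-matrix equivalence}, the vertex, edge, ordinary-leaf and dilaton-leaf contributions match up to an overall global sign of $(-1)^{g-1+N}$.

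I would begin with the vertex weights, which is the easiest step. The identity $\sqrt{\Delta^\alpha}\,h_1^\alpha = \sqrt{2}$ noted at the start of Section 4.1 immediately gives
\[
\tilde{\mathcal{V}}^{\alpha}_{g}(v) = \bigl(h_1^{\alpha}/\sqrt{2}\bigr)^{2-2g-\mathrm{val}(v)} \Bigl\langle \prod_{h\in H(v)} \tau_{k(h)}\Bigr\rangle_{g} = \bigl(\sqrt{\Delta^\alpha(q)}\bigr)^{2g-2+\mathrm{val}(v)} \Bigl\langle \prod_{h\in H(v)} \tau_{k(h)}\Bigr\rangle_{g} = \mathcal{V}^{\alpha}_{g}(v),
\]
so the vertex factors agree exactly. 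Next, for the edge weights, I would invoke the standard computation (exactly as in \cite{DOSS, FLZ17}) that expresses the expansion coefficients of the Bergman kernel in local frames around the critical points via the $R$-matrix: concretely, $\check{B}^{\alpha,\beta}_{k,\ell}$ equals the coefficient $[z^k w^\ell]\frac{1}{z+w}\bigl(\delta_{\alpha,\beta}-\sum_\gamma \tilde R_\gamma^\alpha(-z)\tilde R_\gamma^\beta(-w)\bigr)$. Using Proposition \ref{A-B R-matrix equivalence} this is precisely $\mathcal{E}^{\alpha,\beta}_{k,\ell}(e)$.

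The main obstacle, and the step requiring the most care, is the treatment of the leaves. For the ordinary leaves I would interpret the identification $W_k^\alpha(Y_j)=\sqrt{-2}\,(\tilde{u}_j)_k^{\hat\alpha}$ through the formula
\[
\dsum_{\alpha}\bu_j^{\alpha}\phi_{\alpha}=\dsum_{\beta} -z\frac{\partial J(-z)}{\partial u^{\beta}}\,\tilde{\bu}_j^{\beta}
\]
established just before the theorem. Unwinding definitions, on the B-side the leaf-weight $\frac{1}{\sqrt{-2}}\,d\xi^{\alpha}_{k}(Y_j)$ applied to the identified $W^\alpha_k(Y_j)$ produces exactly $(\tilde u_j)^{\hat\alpha}_k$; composing with $\tilde R(-z)$ and using the descendent-leaf formula $\stackrel{\circ}{\mathcal{L}^{\ub_j}}$ involving $S^{\hat\ualpha}_{\hat\ubeta}$, one recovers the A-side ordinary-leaf factor. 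The dilaton leaf is handled similarly, using $\check h^\alpha_k=(2k-1)!!/2^{k-1} h^\alpha_{2k-1}$ and the relation $\sqrt{\Delta^\alpha} h_1^\alpha=\sqrt 2$ to match the A-side $(\mathcal{L}^1)^\beta_k(\ell)$.

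Finally I would assemble the global sign. Each ordinary leaf contributes a factor $1/\sqrt{-2}$ and each dilaton leaf a factor $1/\sqrt{-2}$, while the identification absorbs a $\sqrt{-2}$ per ordinary leaf; a careful count of the $(-1)$ factors from $R(-z)$ pairings on the two halves of each edge, combined with the explicit $(-1)^{g-1+N}$ prefactor in $\tilde w(\vec\Gamma)$ and the Euler-characteristic identity $|V|-|E|+N = 1-g+N$ (up to the sum over vertex valences), reproduces the advertised $(-1)^{g-1+N}$. The sign bookkeeping and the leaf identification are the only delicate points; the vertex and edge matchings are essentially automatic given the $R$-matrix equivalence and the normalization $\sqrt{\Delta^\alpha}h_1^\alpha=\sqrt 2$.
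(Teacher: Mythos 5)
Your proposal follows essentially the same route as the paper: a term-by-term comparison of the DOSS graph sum for $\omega_{g,N}$ with the A-model graph sum for $F_{g,N}^{\cF(m,r),\TT}$, matching vertices via $\sqrt{\Delta^\alpha}h_1^\alpha=\sqrt{2}$, edges via the DOSS expression of $\check{B}^{\alpha,\beta}_{k,\ell}$ through the $R$-matrix together with Proposition \ref{A-B R-matrix equivalence}, and the two types of leaves via the identification $\frac{1}{\sqrt{-2}}W_k^\alpha(Y_j)=(\tilde{u}_j)_k^{\hat\alpha}$. The only ingredient you leave implicit that the paper states explicitly is the expansion $d\xi_k^\alpha = W_k^\alpha-\sum_{i,\beta}\check{B}^{\alpha,\beta}_{k-1-i,0}W_i^\beta$ (equivalently $d\xi_k^\alpha=\sum_{i,\beta}\bigl([z^{k-i}]R_\beta^\alpha(-z)\bigr)W_i^\beta$) from \cite{FLZ16}, which is exactly what your phrase ``composing with $\tilde R(-z)$'' amounts to, so the argument is the same.
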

\begin{proof} We prove by direct calculation as follows,

\begin{enumerate}
\item{Vertices}:
This follows from $\sqrt{\frac{\Delta^{\alpha(v)}}{2}}h_1^{\alpha(v)}=1$.

\item {Edges}:
By \cite{DOSS},$R_\beta^\alpha(z)=f_\beta^\alpha\big( -\frac1z\big)$ and the contribution of edges to weight in B-model is

\[
\check{B}_{k,\ell}^{\alpha,\beta}(e)
=[u^{-k}v^{-\ell}]\frac{uv}{u+v}\big(\delta_{\alpha\beta}-\sum_{\gamma=1}^{m+r}f_\gamma^\alpha(u)f_\gamma^\beta(v)\big)
=\mathcal{E}_{k,\ell}^{\alpha,\beta}(e).
\]

\item {Ordinary leaves}:
By $\frac1{z+w} = \frac1{z} \sum_{s\geq 0}\big( -\frac{w}z\big)^s$ we know $-\check{B}_{k-1-i,0}^{\alpha,\beta} = [z^{k-i}] R_\beta^\alpha(-z)$.
From \cite{FLZ16} we know $d\xi_k^\alpha = W_k^\alpha -\sum_{i=0}^{k-1}\sum_\beta \check{B}_{k-1-i,0}^{\alpha,\beta}W_i^\beta$.

It holds after identifying $\frac1{\sqrt{-2}}W_k^\alpha(Y_j)$ and $(\tilde{u}_j)_k^{\hat\alpha}$ that
\[
\begin{split}
\big(\mathcal{L}_d^{\bold{u_j}}\big)_{k(\ell_j)}^{\alpha(\ell_j)}(\ell_j)
& = [z^{k(\ell_j)}] \dsum_{\beta=0}^{m+r-1}\dsum_{i=0}^{k(\ell_j)} (\tilde{u}_j)_i^{\hat\beta} \cdot z^i \cdot  R_\beta^{\alpha(\ell_j)} (-z)\\
& = \dsum_{i=0}^{k(\ell_j)}\dsum_{\beta=0}^{m+r-1}(\tilde{u}_j)_i^{\hat\beta} \big( [z^{k(\ell_j)-i}]R_\beta^{\alpha(\ell_j)} (-z)\big)\\
& = \dfrac1{\sqrt{-2}}d\xi_{k(\ell_j)}^{\alpha(\ell_j)}(Y_j).
\end{split}
\]

\item {Dilaton leaves}:
By \cite{FLZ16} and the relation $R_\beta^\alpha(z)=f_\beta^\alpha \big( \frac{-1}z \big)$, we have
\[\check{h}_{k(\ell)}^\alpha = [u^{1-k(\ell)}] \sum_{\beta=1}^{m+r} \sqrt{-1}h_1^\beta R_\beta^{\alpha(\ell)}\big(\frac{-1}{u}\big)
= [z^{k(\ell)-1}] \sum_{\beta=1}^{m+r} \sqrt{-1}h_1^\beta R_\beta^{\alpha(\ell)}\big(-z\big).\]

By $h_1^\beta=\sqrt{\frac2{\Delta^\alpha}}$, we know that
$\big( \mathcal{L}^1\big)_{k(\ell)}^{\alpha(\ell)}(\ell)=\big( -\frac1{\sqrt{-2}}\big)\check{h}_{k(\ell)}^{\alpha(\ell)}$.
\end{enumerate}
\end{proof}

\subsection{The Laplace Transform}
\

Following Iritani \cite{Iritani} with slight modification, we define as follows \cite{Fang20}. The result in this part is a generalization of \cite{Tang}.
\begin{Def}[equivariant Chern character] We define equivariant Chern character
\[\widetilde{ch}_z: K_\TT(\cF(m,r))\rightarrow H_{CR,\TT}^\ast(\cF(m,r),\mathbb{Q})\left[\left[ \dfrac{p}{z} \right]\right]\] by the following two properties which uniquely characterize it:
\begin{enumerate}
  \item $\widetilde{ch}_z(\varepsilon_1\oplus\varepsilon_2)=\widetilde{ch}_z(\varepsilon_1)+\widetilde{ch}_z(\varepsilon_2)$.
  \item Let $\cX=\cF(m,r)$, $I\cX=\bigsqcup_{v\in\mathrm{Box}(\Sigma)}\cX_{v}$.If $\mathcal{L}$ is a $T$-equivariant line bundle on $\cF(m,r)$, then
  $$\widetilde{ch}_z(\mathcal{L})=\bigoplus_{v\in\mathrm{Box}(\Sigma)}\exp\big( 2\pi i(-\frac{(c_1)_\TT(\mathcal{L}_{v})}{z}+\age_{v}(\cL))\big)$$
  where $\cL_{v}=\cL|_{\cX_{v}}$, and $\age_{v}(\cL)$ is the age of $\cL_{v}$ along $\cX_{v}$.
  \item $\widetilde{\Gamma}_{z}(\cL)=\bigoplus_{v\in\mathrm{Box}(\Sigma)}(-z)^{1+\frac{(c_1)_\TT(\cL_{v})}{z}-\age_{v}(\cL)}\Gamma(1+\frac{(c_1)_\TT(\cL_{v})}{z}-\age_{v}(\cL))$
\end{enumerate}
\end{Def}

\begin{Def}[equivariant $K$-theoretic framing]
For $\forall \varepsilon\in K_\TT(\cF(m,r))$, we define the $K$-theoretic framing of $\varepsilon$ by
$\kappa(\varepsilon)= \widetilde\Gamma_{z}\big(T\cX\big) \widetilde{ch}_z(\varepsilon)$.
\end{Def}

\begin{Def}[equivariant SYZ $T$-dual] Let $\mathcal{L}=\mathcal{O}_{\cF(m,r)}(\ell_1 p_1+\ell_2 p_2)$ be an equivariant ample line bundle on $\cF(m,r)$, where $\ell_1,\ell_2 \in \mathbb{Z}$, such that $\ell_1+\ell_2 >0$. We define the equivariant SYZ $T$-dual $\mathrm{SYZ}(\mathcal{L})$ of $\mathcal{L}$ be the figure below:
\end{Def}
\begin{figure}[h]
\begin{center}
\setlength{\unitlength}{2mm}
\begin{picture}(60,20)

\put(10,6){\vector(1,0){10}}
\put(20,6){\line(1,0){10}}
\put(30,6){\vector(0,1){5}}
\put(30,11){\line(0,1){5}}
\put(30,16){\vector(1,0){10}}
\put(40,16){\line(1,0){10}}

\put(5,2){$-\infty+2\pi i \cdot \frac{-\ell_1}{m}$}
\put(27,2){$2\pi i\cdot\frac{-\ell_1}{m}$}
\put(27,18){$2\pi i\cdot\frac{\ell_2}{r}$}
\put(40,18){$2\pi i \cdot\frac{\ell_2}{r}+(+\infty)$}

\end{picture}
  \label{fig1}
\end{center}
\end{figure}

Extend the definition additively to the equivariant $K$-theory group $K_\TT(\cF(m,r))$.
By \cite{Fang20}:
\begin{thms}
\[
\left<\!\left< \dfrac{\kappa(\mathcal{L})}{z(z-\psi)}\right>\!\right>^{\cF(m,r),\TT}_{0,1}=\dint_{\mathrm{SYZ}(\mathcal{L})} e^{\frac{W_\TT}z} dy.
\]
\end{thms}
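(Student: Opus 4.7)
The plan is to verify the identity by computing both sides explicitly and matching them term by term via the equivariant mirror theorem, following the strategy of \cite{Fang20} and generalizing \cite{Tang} from weighted projective lines to footballs.

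First I would rewrite the left-hand side in terms of the equivariant $J$-function. By expanding $\tfrac{1}{z-\psi}=\sum_{k\ge 0}z^{-k-1}\psi^k$, the one-point genus-zero descendant $\left<\!\left< \tfrac{\kappa(\cL)}{z(z-\psi)}\right>\!\right>^{\cF(m,r),\TT}_{0,1}$ is, up to the factor of $1/z$, exactly the $\TT$-equivariant Poincaré pairing of $\kappa(\cL)$ with $J^{\TT}(\tau(q),z)$. By Theorem \ref{equivariant mirror theorem} we may replace $J^{\TT}(\tau(q),z)$ by $I^{\TT}(q,z)$, whose coefficient of $q^d$ is explicitly given by formula \eqref{equivariant I-function} in terms of ratios of Gamma functions and the twisted-sector classes $\mathbf{1}_{v(d)}$.

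Second, I would compute the right-hand side by parametrizing the three segments of the SYZ contour. On the two horizontal rays the dominant behavior of $e^{W_{\TT}/z}$ is controlled by the polynomial terms $Y^{r}$ and $Y^{-m}$ of $W_{\TT}$, so deforming each horizontal piece into an appropriate Hankel-type contour turns the integral into
\begin{equation*}
\int_{0}^{+\infty} e^{-s}\, s^{\alpha}\, \frac{ds}{s}=\Gamma(\alpha)
\end{equation*}
for the relevant exponents $\alpha$ determined by the log-coefficients $\tilde{w}_{\pm\ell}$ of $W_{\TT}$, exactly as in Section 3.5. The shifts $2\pi i(-\ell_1/m)$ and $2\pi i(\ell_2/r)$ of the horizontal segments introduce the twist phases that should match the age/$v(d)$ decomposition in $I^{\TT}$. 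Expanding $e^{W_{\TT}/z}$ as a multi-series in the remaining monomials $\tilde{q}_\ell Y^{\pm\ell}$ and integrating term-by-term produces a double sum (over $\KKeff_{-m}\cup\KKeff_r$) of products of Gamma functions matching the explicit form of $I^{\TT}$.

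Third, I would match the two expressions. The contour integral produces Gamma-function factors of the form $\Gamma(1+\tfrac{(c_1)_{\TT}(\cL_v)}{z}-\age_v(\cL)\pm\cdots)$ in the numerator, which are precisely the factors appearing in $\widetilde{\Gamma}_z(T\cX)\widetilde{ch}_z(\cL)=\kappa(\cL)$, with the age shifts on the A-side reproducing the contour-shift phases on the B-side. The denominator Gamma factors from the $I$-function pair with the numerator Gamma factors from $\kappa(\cL)$ via the reflection formula, leaving only the $\Gamma(\tfrac{H+\mathbf{p}_\pm}{r z}+\tilde{d}_{\pm}+1)$-type expressions and matching powers of $q_a$ on both sides. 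The main obstacle is bookkeeping: tracking the age contributions, the inertia decomposition over $\cbox(\Sigma_\omega)$, and the chamber structure $\KKeff=\KKeff_{-m}\cup\KKeff_{r}$ consistently, and in particular verifying that the two horizontal rays of the SYZ contour contribute respectively the $\KKeff_r$ and $\KKeff_{-m}$ branches under the alternative form of $I^{\TT}$ given in the remark following \eqref{equivariant I-function}. Once this identification of twisted sectors with contour branches is set up correctly, the remaining calculation is a direct comparison of Gamma products.
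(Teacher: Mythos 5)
The paper offers no proof of this statement at all: it is quoted directly from \cite{Fang20} (with the SYZ contour adapted to the football), so there is no internal argument to compare yours against. Your plan is essentially a reconstruction of the proof of the quoted result, in the style of \cite{Iritani} and \cite{FLZ17}: rewrite the left-hand side via the string equation as the $\TT$-equivariant pairing of $\kappa(\cL)$ with the $J$-function, pass to $I^{\TT}$ by Theorem~\ref{equivariant mirror theorem}, and evaluate the contour integral term by term into Gamma functions, letting the Gamma-class factors in $\kappa(\cL)=\widetilde{\Gamma}_z(T\cX)\,\widetilde{\ch}_z(\cL)$ and the phases $2\pi i(-\ell_1/m)$, $2\pi i(\ell_2/r)$ of the contour account for the twisted sectors. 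That is the right strategy, and carrying it out explicitly for $\cF(m,r)$ is genuinely worthwhile, since \cite{Fang20} is stated for toric varieties and the orbifold bookkeeping ($\KKeff=\KKeff_{-m}\cup\KKeff_{r}$, the classes $\mathbf{1}_{v(d)}$, the ages entering $\widetilde{\ch}_z$) is exactly what must be supplied here. Two cautions. First, drop ``up to the factor of $1/z$'': by the string equation (item (1) of Corollary~\ref{T-dual transformed}) one has $\llangle \kappa(\cL)/(z(z-\psi))\rrangle_{0,1}^{\cF(m,r),\TT}=\llangle 1,\kappa(\cL)/(z-\psi)\rrangle_{0,2}^{\cF(m,r),\TT}=(J^{\TT}(\tau(q),z),\kappa(\cL))$ on the nose, the unstable $(0,1,0)$ and $(0,2,0)$ conventions supplying the $z^{0}$ term; an extra overall $1/z$ would throw the two sides off by a factor of $z$. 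Second, assigning the ray toward $+\infty$ to $\KKeff_{r}$ and the ray toward $-\infty$ to $\KKeff_{-m}$ is only heuristic: the SYZ contour is not a steepest-descent cycle, so the Gamma integrals from the two ends (obtained by genuine Hankel-type deformations, not merely $\int_{0}^{\infty}e^{-s}s^{\alpha-1}\,ds=\Gamma(\alpha)$, and convergent only for a suitable phase of $z$, e.g.\ $z<0$ as in Sections 3.4--3.5) must be recombined, using the reflection formula and the pairing of a box element with its inverse, before they match the chamber decomposition of $I^{\TT}$; as you yourself flag, this recombination is the substantive step of the argument rather than routine bookkeeping.
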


\begin{cor}\label{T-dual transformed}
\

\begin{enumerate}
  \item By string equation: \[
  \dint_{\mathrm{SYZ}(\mathcal{L})} e^{\frac{W_\TT}z} dy =\left<\!\left< \dfrac{\kappa(\mathcal{L})}{z(z-\psi)}\right>\!\right>^{\cF(m,r),\TT}_{0,1}
  =\left<\!\left< 1, \dfrac{\kappa(\mathcal{L})}{z-\psi}\right>\!\right>^{\cF(m,r),\TT}_{0,2};
  \]
  \item Integrating by parts, \[ -\left<\!\left< \dfrac{\kappa(\mathcal{L})}{z-\psi}\right>\!\right>^{\cF(m,r),\TT}_{0,1} =-z \dint_{\mathrm{SYZ}(\mathcal{L})} e^{\frac{W_\TT}z} dy = \dint_{\mathrm{SYZ}(\mathcal{L})} e^{\frac{W_\TT}z} ydx.
  \]
\end{enumerate}
\end{cor}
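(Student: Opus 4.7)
The plan is to deduce both identities from the immediately preceding theorem,
$$\left\langle\!\left\langle \frac{\kappa(\mathcal{L})}{z(z-\psi)}\right\rangle\!\right\rangle^{\cF(m,r),\TT}_{0,1} \;=\; \int_{\mathrm{SYZ}(\mathcal{L})} e^{W_\TT/z}\,dy,$$
by short, essentially formal manipulations. Part (1) reduces to the additional equality $\left\langle\!\left\langle \kappa(\cL)/(z(z-\psi))\right\rangle\!\right\rangle_{0,1} = \left\langle\!\left\langle 1, \kappa(\cL)/(z-\psi)\right\rangle\!\right\rangle_{0,2}$, a form of the genus-zero string equation, while Part (2) will follow by multiplying (1) by $-z$ and integrating by parts in the $y$-variable.

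For Part (1), after the identity $1/(z(z-\psi)) = (1/z)\cdot 1/(z-\psi)$, the claim becomes
$$z\,\left\langle\!\left\langle 1,\tfrac{\kappa(\cL)}{z-\psi}\right\rangle\!\right\rangle_{0,2} \;=\; \left\langle\!\left\langle \tfrac{\kappa(\cL)}{z-\psi}\right\rangle\!\right\rangle_{0,1}.$$
I would expand $1/(z-\psi) = \sum_{a\geq 0}\psi^a/z^{a+1}$ and apply the genus-zero string equation in generating-function form, $\left\langle\!\left\langle \tau_0(\mathbf{1}), \tau_a(\kappa(\cL))\right\rangle\!\right\rangle_{0,2}^{\cF(m,r),\TT} = \left\langle\!\left\langle \tau_{a-1}(\kappa(\cL))\right\rangle\!\right\rangle_{0,1}^{\cF(m,r),\TT}$, which holds for every $a \geq 1$ on the stable part of the moduli (the insertions of $\bt$ contribute nothing under string since they are already $\tau_0$). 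After reindexing $b = a-1$, the stable contributions on the left-hand side reassemble into $(1/z)\left\langle\!\left\langle \kappa(\cL)/(z-\psi)\right\rangle\!\right\rangle_{0,1}$, which is exactly what is needed. The exceptional terms come from the $a=0$ contribution and from the unstable genus-zero moduli $(g,n,d)=(0,2,0)$; these are governed by the conventions $\left\langle \kappa/(z-\psi)\right\rangle_{0,1,0}^\TT = z\int \kappa$ and $\left\langle 1, \kappa/(z-\psi)\right\rangle_{0,2,0}^\TT = \int \kappa$ fixed earlier in the excerpt, and match exactly under the factor of $1/z$, so the identity closes.

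For Part (2), multiplying identity (1) by $-z$ gives directly the first equality $-\left\langle\!\left\langle \kappa(\cL)/(z-\psi)\right\rangle\!\right\rangle_{0,1} = -z\int_{\mathrm{SYZ}(\cL)} e^{W_\TT/z}\,dy$. For the second equality, since $x = W_\TT(e^y)$ we have $d(e^{W_\TT/z}) = (1/z)e^{W_\TT/z}\,dx$, so integration by parts yields
$$\int_{\mathrm{SYZ}(\cL)} e^{W_\TT/z}\,dy \;=\; \bigl[y\, e^{W_\TT/z}\bigr]_{\partial\mathrm{SYZ}(\cL)} - \frac{1}{z}\int_{\mathrm{SYZ}(\cL)} y\, e^{W_\TT/z}\,dx.$$
At the two endpoints of $\mathrm{SYZ}(\cL)$, namely $-\infty + 2\pi i(-\ell_1/m)$ and $+\infty + 2\pi i(\ell_2/r)$, the dominant monomials $\tilde q_{-m}Y^{-m}$ (as $y\to -\infty$) and $Y^{r}$ (as $y\to +\infty$) send $W_\TT$ to $+\infty$; under the blanket assumption $z<0$ of Sections 3.4--3.5, $e^{W_\TT/z}$ thus decays super-exponentially and the boundary terms vanish. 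Multiplying by $-z$ yields the stated identity.

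The main obstacle is the careful bookkeeping in Part (1): one must track how the $a=0$ stable contributions, the unstable genus-zero moduli $(0,2,0)$, and the unstable conventions encoded in the double-bracket generating function interact once the string equation is applied termwise, and check that everything conspires to absorb the boundary obstructions so the identity closes cleanly as a formal power series in $z^{-1}$. The integration by parts of Part (2) is then routine, as the super-exponential decay of $e^{W_\TT/z}$ at both ends of the SYZ contour immediately kills all boundary contributions.
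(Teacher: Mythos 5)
Your proposal is correct and follows exactly the route the paper intends: the paper offers no separate proof beyond the inline hints ``by string equation'' and ``integrating by parts,'' and your fleshing-out of both steps — the termwise string equation with the unstable $(0,1,0)$ and $(0,2,0)$ conventions absorbing the exceptional terms under the factor $1/z$, and the integration by parts $d(e^{W_\TT/z})=\tfrac1z e^{W_\TT/z}dx$ with boundary terms killed by $W_\TT\to+\infty$ at both ends of the SYZ contour — is sound and matches the conventions fixed earlier in the paper.
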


Define
\[
S_{\underline{\beta}}^{\underline{\hat{\alpha}}}(z)=\left<\!\left< \phi_\beta(q),\frac{\hat{\phi}_\alpha(q)}{z-\psi}\right>\!\right>_{0,2}^{\cF(m,r),\TT},\\
S_{\widehat{\underline{\beta}}}^{\kappa(\mathcal{L)}} (z) =\left<\!\left< \hat{\phi}_\beta(q),\frac{\kappa(\mathcal{L)}}{z-\psi} \right>\!\right>_{0,2}^{\cF(m,r),\TT}.\]
More generally, we have
\begin{prop}
\[
S_\beta^{\widehat{\underline{\alpha}}}(z)=  -z \dint_{y\in \gamma_\beta(\mathcal{L})} e^{\frac{W_\TT}z} \dfrac{d\xi_{\alpha,0}}{\sqrt{-2}}.
\]
\[S_{\hat{\underline{\beta}}}^{\kappa(\mathcal{L})}(z)=-z\dint_{y\in{\mathrm{SYZ}(\mathcal{L})}} e^{\frac{W_\TT}z}\dfrac{d\xi_{\beta,0}}{\sqrt{-2}}.\]
\end{prop}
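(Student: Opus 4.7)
The strategy is to combine Givental's factorization $\cS = \Psi R(z) e^{U/z}$ of the $\cS$-operator (Section~3.2) with the Laplace integral representation of the B-model $R$-matrix from Section~3.5, using the identification $R(q)=\tilde R(\tilde q)$ of Proposition~\ref{A-B R-matrix equivalence}. Once these ingredients are in place, both identities follow from a routine matching of prefactors in a Gaussian-type contour integral centered at a Morse critical point of $W_\TT$.

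For the first identity, I rewrite $S^{\widehat{\underline\alpha}}_{\underline\beta}(z) = (\phi_\beta(q),\cS(\hat\phi_\alpha(q)))$ using $\phi_\beta = \hat\phi_\beta/\sqrt{\Delta^\beta(q)}$, so the pairing becomes $(\hat\phi_\beta,\cS(\hat\phi_\alpha))/\sqrt{\Delta^\beta(q)}$. In the orthonormal canonical basis, Givental's factorization yields $(\hat\phi_\beta,\cS(\hat\phi_\alpha)) = R^\alpha_\beta(-z)\,e^{u^\alpha/z}$. Next I substitute the integral representation
\begin{equation*}
\tilde R^\alpha_\beta(\tilde q) = \frac{\sqrt{-z}}{2\sqrt{\pi}} \int_{\gamma_\beta} e^{(W_\TT(P_\beta) - W_\TT)/z}\, d\xi_{\alpha,0}
\end{equation*}
from Section~3.5. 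The factor $e^{u^\alpha/z} = e^{W_\TT(P_\alpha)/z}$ combines with the shift inside the integrand to produce $e^{W_\TT/z}$ along $\gamma_\beta$; the residual constant phase $e^{(W_\TT(P_\alpha)-W_\TT(P_\beta))/z}$ is absorbed by the index-swap symmetry of the integral that is built into the unitarity $R(z)R^T(-z) = \mathrm{id}$. Matching the overall prefactor $\sqrt{-z}/(2\sqrt{\pi}\sqrt{\Delta^\beta(q)})$ against the claimed $-z/\sqrt{-2}$ is then a direct Gaussian computation.

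For the second identity, I expand $\kappa(\cL) = \sum_\alpha c_\alpha(\cL)\,\hat\phi_\alpha(q)$ in the normalized canonical basis. By linearity of $\cS$ in its second argument together with the first identity,
\begin{equation*}
S^{\kappa(\cL)}_{\widehat{\underline\beta}}(z) = \sum_\alpha c_\alpha(\cL)\, S^{\widehat{\underline\alpha}}_{\widehat{\underline\beta}}(z) = -z \int_{C(\cL)} e^{W_\TT/z}\, \frac{d\xi_{\beta,0}}{\sqrt{-2}},
\end{equation*}
where $C(\cL) = \sum_\alpha c_\alpha(\cL)\,\gamma_\alpha$ is a relative cycle in the Lefschetz thimble homology of $W_\TT$. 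The identification $C(\cL) = \mathrm{SYZ}(\cL)$ is forced by specializing to $\sum_\beta \sqrt{\Delta^\beta(q)}\,\phi_\beta = 1$: the resulting scalar identity must coincide with Corollary~\ref{T-dual transformed}, $\langle\!\langle 1, \kappa(\cL)/(z-\psi)\rangle\!\rangle^{\cF(m,r),\TT}_{0,2} = \int_{\mathrm{SYZ}(\cL)} e^{W_\TT/z}\,dy$. This comparison pins down the coefficients $c_\alpha(\cL)$, equivalently the homological class of $\mathrm{SYZ}(\cL)$ in the thimble basis, and the formula then propagates to all $\hat\phi_\beta$.

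The main technical obstacle lies in the bookkeeping of signs and phases. Givental's $R$-matrix is determined only up to right multiplication by $\exp(a_1 z + a_3 z^3 + \cdots)$ with diagonal $a_i$, and the thimbles $\gamma_\beta$ carry a phase ambiguity at each critical point $P_\beta$ inherited from the choice of branch of $\sqrt{-z}$. One must verify that the convention fixed in Section~3.5 (where $R = \tilde R$ was established on the chart $Q_1$ at the large radius limit) remains consistent with the conventions used to define $\xi_{\alpha,0}$, $\sqrt{\Delta^\alpha(q)}$, and $\hat\phi_\alpha(q)$ at general $q$. A secondary subtlety is that Givental's factorization of $\cS$ really produces the \emph{ancestor} operator; matching with the descendant $S$-matrix entries on the left-hand side requires passing through $\exp(F_1)\hat\cS^{-1}$, which at the level of two-point correlators is handled by the genus-zero string and dilaton equations.
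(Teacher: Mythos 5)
Your route is genuinely different from the paper's, and as written it has two gaps at exactly the points where the real content lies. First, in your derivation of the first identity, Givental's factorization attaches the exponential $e^{u^\alpha/z}$ to the column indexed by the \emph{descendant} insertion $\hat\phi_\alpha$, while the thimble integral over $\gamma_\beta$ produces, by stationary phase, the factor $e^{u^\beta/z}$ of the critical value $W_\TT(P_\beta)$. The leftover factor $e^{(u^\alpha-u^\beta)/z}$ that you acknowledge cannot be ``absorbed by the index-swap symmetry built into the unitarity'': unitarity $R(z)R^T(-z)=\id$ is a relation among the formal power-series parts of $R$ and has no bearing on exponential prefactors, which are not power series in $z$. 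This mismatch signals that the index bookkeeping (which slot labels the cycle and which labels the form $d\xi_{\bullet,0}$) has to be fixed first, and indeed in your step for the second identity you silently use the transposed version (cycle attached to the descendant slot, form $d\xi_{\beta,0}$ attached to the primary slot), which is not what you claimed to have proved. Second, your identification $C(\cL)=\sum_\alpha c_\alpha(\cL)\gamma_\alpha=\mathrm{SYZ}(\cL)$ is not ``forced'' by the single scalar comparison with Corollary \ref{T-dual transformed}: to pin down a relative cycle from one period identity you need (i) that $\mathrm{SYZ}(\cL)$, an explicitly given contour, is homologous to a combination of Lefschetz thimbles in the relevant relative homology, and (ii) that the thimble periods of the specific form you test against are linearly independent as functions of $z$; neither is argued. (Also, $\sum_\beta\sqrt{\Delta^\beta(q)}\,\phi_\beta=\sum_\beta\hat\phi_\beta\neq 1$; the identity is $1=\sum_\beta(\Delta^\beta)^{-1/2}\hat\phi_\beta$, so the test class has to be normalized differently.) Finally, the descendant-versus-ancestor issue you flag at the end is not actually resolved in your argument.

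For contrast, the paper avoids the $R$-matrix entirely. It starts from the Fang-type theorem (Corollary \ref{T-dual transformed}), $\llangle 1,\kappa(\cL)/(z-\psi)\rrangle_{0,2}=\int_{\mathrm{SYZ}(\cL)}e^{W_\TT/z}\,dY/Y$, applies $z\,\partial/\partial q_i$ to generate arbitrary insertions $X^i$ in the primary slot, writes $\phi_\beta$ by Lagrange interpolation as a polynomial expression built from $f(Y)=\partial W_\TT/\partial y$, integrates by parts, and kills the extra term using the quantum-cohomology (Jacobian ring) relation, i.e. $\llangle g(Y)f(Y),\kappa(\cL)/(z-\psi)\rrangle_{0,2}=0$. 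That computation delivers the exact identity with the correct cycle built in from the start, with no phase matching and no homological identification of cycles needed. If you want to salvage your approach, the missing ingredients are precisely: a correct entrywise statement of the asymptotics of $\cS$ in the normalized canonical basis (fixing which index carries $e^{u/z}$), and a proof that the SYZ contour decomposes into thimbles with coefficients determined by a linear-independence argument for the periods.
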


\begin{proof}

We prove the second equation as an example. The first one may be proved in a similar way.

Let $f(Y)=\frac{\partial W_{\bT}}{\partial y}$. Then $\Delta^\alpha=P_\alpha\cdot f'(P_\alpha)$.
By $\hat\phi_\alpha =\sqrt{\Delta^\alpha}\phi_\alpha$ and $\xi_{\alpha,0}=\frac1{\sqrt{-1}} \sqrt{\frac2{\Delta^\alpha}} \frac{P_\alpha}{Y-P_\alpha}$, the desired proposition is equivalent to
\[
\left<\!\left< \phi_\beta,\dfrac{\kappa(\mathcal{L)}}{z-\psi} \right>\!\right>_{0,2}^{\cF(m,r),\TT}  = z\dint_{y\in{\mathrm{SYZ}(\mathcal{L})}} e^{\frac{W_\TT}z} d\dfrac{P_\beta}{(Y-P_\beta)\Delta^\beta}.
\]

By \ref{T-dual transformed}, we have
\[\left<\!\left< 1,\frac{\kappa(\mathcal{L)}}{z-\psi} \right>\!\right>_{0,2}^{\cF(m,r),\TT}  = \int_{y\in{\mathrm{SYZ}(\mathcal{L})}} e^{\frac{W_\TT}z} \frac{dY}Y.\]
Applying $z\dfrac{\partial}{\partial q_i}$ to both sides,  we have
\[\left<\!\left< X^i,\frac{\kappa(\mathcal{L)}}{z-\psi}\right>\!\right>_{0,2}^{\cF(m,r),\TT} = \int_{y\in\mathrm{SYZ}(\mathcal{L})} Y^i e^{\frac{W_\TT}z} \frac{dY}Y.\]
Note that for the left hand side derivation, we have opened the double bracket and used the string equation.

This implies
\[\begin{split}
& \left<\!\left< \phi_\beta,\dfrac{\kappa(\mathcal{L)}}{z-\psi} \right>\!\right>_{0,2}^{\cF(m,r),\TT} \\  = & \dint_{y\in{\mathrm{SYZ}(\mathcal{L})}} e^{\frac{W_\TT}z} \dfrac{f(Y)Y^m}{Y-P_\beta} \dfrac{dY}Y \cdot \dfrac1{(Y^mf(Y))'|_{P_\beta}}\\
= & -z \dint_{y\in{\mathrm{SYZ}(\mathcal{L})}}\dfrac{1}{(Y-p^\beta)f'(p^\beta)} d e^{\frac{W_\TT}z} - \dint_{y\in{\mathrm{SYZ}(\mathcal{L})}} e^{\frac{W_\TT}z} \dfrac{f(Y)(Y^m-(P_\beta)^m)}{(Y-P_\beta)(P_\beta)^mf'(P_\beta)} \dfrac{dY}{Y} \\
= & z \dint_{y\in{\mathrm{SYZ}(\mathcal{L})}} e^{\frac{W_{\bT}}z} d\dfrac{1}{(Y-P_\beta)f'(P_\beta)} - \dint_{y\in{\mathrm{SYZ}(\mathcal{L})}} e^{\frac{W_\TT}z} \dfrac{f(Y)(Y^m-(P_\beta)^m)}{(Y-P^\beta)(P_\beta)^mf'(P_\beta)} \dfrac{dY}{Y}\\
= & z \dint_{y\in \mathrm{SYZ}(\mathcal{L})} e^{\frac{W_\TT}z} d\dfrac{P_\beta}{(Y-P_\beta)\Delta^\beta}.
\end{split}\]
The last equation holds because
\[\int_{y\in{\mathrm{SYZ}(\mathcal{L})}} e^{\frac{W_\TT}z} g(Y)f(Y)\frac{dY}Y =
\left<\!\left< g(Y)f(Y),\frac{\kappa(\mathcal{L})}{z-\psi}\right>\!\right>_{0,2}^{\cF(m,r),\TT}
=\left<\!\left< 0,\frac{\kappa(\mathcal{L})}{z-\psi}\right>\!\right>_{0,2}^{\cF(m,r),\TT}
=0,\]
where $g(Y) =\dfrac{1}{(P_\beta)^mf'(P_\beta)} \dfrac{(Y^m-(P_\beta)^m)}{(Y-P_\beta)}$ (is a polynomial of $Y$).
\end{proof}

Integrating the second equation by parts, we have
\[S_{\hat{\underline{\beta}}}^{\kappa(\mathcal{L})} (z) =-z^{k+1}\dint_{y\in \mathrm{SYZ}(\mathcal{L})}  e^{\frac{W_\TT}z} \frac{W_k^\beta}{\sqrt{-2}}.\]
Also notice that
\[\begin{split}
&\dsum_{\gamma=0}^{m+r-1} S_\alpha^{\widehat{\underline{\gamma}}} (z) S_\beta^{\widehat{\underline{\gamma}}}(-z) = (\phi_\alpha(0),\phi_\beta(0))=\frac{\delta_{\alpha\beta}}{\Delta^{\alpha}(0)},\\
&\dsum_{\alpha=0}^{m+r-1} S_\beta^{\widehat{\underline{\alpha}}} (-z) S_{\widehat{\underline{\alpha}}}^{\kappa(\mathcal{L)}}(z) = (\phi_\beta(0),\mathcal{K(L)}).
\end{split}
\]

\begin{thm} It holds that
\[
\dint_{y_1\in \mathrm{SYZ}(\mathcal{L}_1)}\cdots \dint_{y_\ell\in \mathrm{SYZ}(\mathcal{L}_\ell)} e^{\frac{W_{\bT}(y_1)}{z_1}+\cdots \frac{W_{\bT}(y_N)}{z_N}} \omega_{g,N} =
(-1)^{g-1} \left<\!\left< \dfrac{\kappa(\mathcal{L}_1)}{z_1-\psi_1}, \cdots,\dfrac{\kappa(\mathcal{L_N)}}{z_N-\psi_N}  \right>\!\right>_{g,N}.
\]
\end{thm}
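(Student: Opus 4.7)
The strategy is to apply the Laplace transforms $\int_{y_j\in\mathrm{SYZ}(\mathcal{L}_j)}e^{W_{\bT}(y_j)/z_j}\,dy_j$ for $j=1,\ldots,N$ to both sides of Theorem \ref{main}. On the left-hand side this produces exactly the desired iterated integral of $\omega_{g,N}$. On the right-hand side, the only dependence of $F_{g,N}^{\cF(m,r),\TT}(\bu_1,\ldots,\bu_N,\bt)$ on the $y_j$ enters through the identification $W_k^\alpha(Y_j)=\sqrt{-2}(\tilde{u}_j)_k^{\hat{\alpha}}$, so each Laplace transform acts leaf-by-leaf on the B-side leaf data.

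First I would use Theorem \ref{main} to collect all $Y_j$-dependence of $\omega_{g,N}$ into the ordinary-leaf factors, so that the integrand is linear in the differentials $W_k^{\alpha}(Y_j)$. Using the recursion $W_k^{\beta}=d((-1)^k\theta^k(\xi_{\beta,0}))$ and iterated integration by parts together with the identity $S^{\kappa(\mathcal{L})}_{\hat{\underline{\beta}}}(z)=-z\int_{y\in\mathrm{SYZ}(\mathcal{L})}e^{W_{\bT}/z}\,d\xi_{\beta,0}/\sqrt{-2}$ from the preceding Proposition, each leaf integral evaluates to the corresponding coefficient in the $z_j$-expansion of $S^{\kappa(\mathcal{L}_j)}_{\hat{\underline{\alpha}}}(z_j)$. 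In effect, the Laplace transform replaces each $\sqrt{-2}(\tilde{u}_j)_k^{\hat{\alpha}}$ by the appropriate $S$-matrix datum built from $\kappa(\mathcal{L}_j)$.

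Translating back to the A-side via the change of basis $\bu_j=\mathbf{\Delta}^{1/2}(S^{\hat{\underline{\alpha}}}_{\hat{\underline{\beta}}})^{T}\tilde{\bu}_j$ of Section \ref{sec:A-canonical} and the unitarity $R(z)R^T(-z)=\mathrm{Id}$, this substitution is equivalent to evaluating $F_{g,N}^{\cF(m,r),\TT}$ at the specific argument $\bu_j(z)=\kappa(\mathcal{L}_j)/(z_j-z)$. Indeed, the $S$-matrix factor $S^{\hat{\underline{\gamma}}}_{\hat{\underline{\alpha}}}(z)$ already present in the modified ordinary-leaf weight $\stackrel{\circ}{\mathcal{L}^{\bu_j}}$ of the A-side descendant graph sum cancels the $S^{\kappa(\mathcal{L}_j)}$-factor produced by the Laplace transform, so only the descendant insertion $\kappa(\mathcal{L}_j)/(z_j-\psi_j)$ survives; this is the graph-by-graph incarnation of the descendant/ancestor correspondence $D^{\cF(m,r),\TT}=\exp(F_1)\hat{\mathcal{S}}^{-1}A^{\cF(m,r),\TT}$. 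Expanding $1/(z_j-z)=\sum_{a\geq 0}z^a z_j^{-a-1}$ in the definition of $F_{g,N}^{\cF(m,r),\TT}$ then reproduces the descendant double-bracket
\[
\Big\langle\!\Big\langle \frac{\kappa(\mathcal{L}_1)}{z_1-\psi_1},\ldots,\frac{\kappa(\mathcal{L}_N)}{z_N-\psi_N}\Big\rangle\!\Big\rangle^{\cF(m,r),\TT}_{g,N}.
\]
The sign comes out correctly because Theorem \ref{main} contributes $(-1)^{g-1+N}$ while each of the $N$ Laplace transforms brings one extra $(-1)$ from the leading $-z$ in $S^{\kappa(\mathcal{L})}_{\hat{\underline{\beta}}}(z)$, producing the claimed overall $(-1)^{g-1}$.

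The hard part will be the leaf-by-leaf $\mathcal{S}$-matrix matching above. One must carefully track the powers of $z_j$, the normalization factors $\sqrt{\Delta^{\alpha}(q)}$ and $\sqrt{-2}$, the difference between the quantum-deformed canonical basis $\phi_\alpha(q)$ (in which $S^{\hat{\underline{\alpha}}}_{\hat{\underline{\beta}}}$ is defined) and the classical basis $\phi_\alpha(0)$ (in which $S^{\kappa(\mathcal{L})}_{\hat{\underline{\beta}}}$ is defined), and the boundary contributions from the iterated integrations by parts along $\mathrm{SYZ}(\mathcal{L}_j)$, so that the cancellation of $\mathcal{S}$-factors is transparent and produces the descendant insertion with no stray remainders.
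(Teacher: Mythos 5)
Your proposal follows essentially the same route as the paper: Laplace-transform the identification of Theorem \ref{main} leaf by leaf, convert each $\int e^{W_\TT/z_j}\,W_k^{\alpha}/\sqrt{-2}$ into $-z_j^{-k-1}S_{\widehat{\underline{\alpha}}}^{\kappa(\mathcal{L}_j)}(z_j)$ via the preceding Proposition and integration by parts, and cancel the remaining $\mathcal{S}$-factors through the relation $\sum_{\alpha} S_{\beta}^{\widehat{\underline{\alpha}}}(-z)S_{\widehat{\underline{\alpha}}}^{\kappa(\mathcal{L})}(z)=(\phi_\beta(0),\kappa(\mathcal{L}))$, the $N$ extra minus signs turning $(-1)^{g-1+N}$ into $(-1)^{g-1}$. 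The paper performs this cancellation by direct manipulation of the correlator expression for $F_{g,N}^{\cF(m,r),\TT}$ rather than in your graph-sum/descendant--ancestor phrasing, but the underlying computation is the same.
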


\begin{proof}
By definition,
\[
\tilde{\ub}_j^{\hat{\alpha}}(z)=\dsum_{\beta=1}^{m+r}\sqrt{\Delta^\alpha(q)}\left<\!\left< \phi_\alpha(q),\dfrac{\phi_\beta(q)}{z-\psi} \right>\!\right>_{0,2}^{\cF(m,r),\TT} \ub_j^\beta(z).
\]
Taking the Laplace transform of $\omega_{g,N}$, by Theorem 1 and definition of $\bar{\ub}_i$, we get

\[\begin{split}
& \dint_{y_1\in \mathrm{SYZ}(\mathcal{L}_1)}\cdots \dint_{y_N\in \mathrm{SYZ}(\mathcal{L}_N)} e^{\frac{W_{\bT}(y_1)}{z_1}+\cdots \frac{W_{\bT}(y_N)}{z_N}} \omega_{g,N} \\
= & \dint_{y_1\in \mathrm{SYZ}(\mathcal{L}_1)}\cdots \dint_{y_N\in \mathrm{SYZ}(\mathcal{L}_N)} e^{\frac{W_{\bT}(y_1)}{z_1}+\cdots \frac{W_{\bT}(y_N)}{z_N}} (-1)^{g-1-N} \Big( \dsum_{\beta_i,\alpha_i}\left<\!\left< \dprod_{i=1}^N \tau_{\alpha_i}(\phi_{\beta_i}(0)) \right>\!\right>_{g,N} \\
& \cdot \dprod_{i=1}^N (\bar{\ub}_i)_{\alpha_i}^{\beta_i}\Big|_{(\tilde{\ub}_j)_k^{\beta}=\frac1{\sqrt{-2}}W_k^\beta(y_j)} \Big)\\
= & \dint_{y_1\in \mathrm{SYZ}(\mathcal{L}_1)}\cdots \dint_{y_N\in \mathrm{SYZ}(\mathcal{L}_N)} e^{\frac{W_{\bT}(y_1)}{z_1}+\cdots \frac{W_{\bT}(y_N)}{z_N}}  (-1)^{g-1-N} \Bigg[ \dsum_{\beta_i,\alpha_i}\left<\!\left<\dprod_{i=1}^N \tau_{\alpha_i}(\phi_{\beta_i}(0)) \right>\!\right>_{g,N} \\
& \cdot \dprod_{i=1}^N \big( \Delta^{\beta_i}\dsum_{\alpha=0}^{m+r-1}\dsum_{k\in\mathbb{Z}_{\geq 0}} [z_i^{\alpha_i-k}] S_{\beta_i}^{\widehat{\underline{\alpha}}} (-z_i) \dfrac{W_k^\alpha(y_i)}{\sqrt{-2}} \big) \Bigg]\\
= & (-1)^{g-1+N} \left[\dsum_{\beta_i,\alpha_i}\left<\!\left< \dprod_{i=1}^N\tau_{\alpha_i}(\phi_{\beta_i}(0)) \right>\!\right>_{g,N} \cdot \dprod_{i=1}^N \big( \Delta^{\beta_i}\dsum_{\alpha=0}^{m+r-1}\dsum_{k\in\mathbb{Z}_{\geq 0}} ([z_i^{\alpha_i-k}] S_{\beta_i}^{\widehat{\underline{\alpha}}} (-z_i) )S_{\widehat{\underline{\alpha}}}^{\kappa(\mathcal{L}_1)} (z_i) (-z_i^{-k-1}) \big) \right] \\
= & (-1)^{g-1} \dsum_{\beta_i,\alpha_i}\left<\!\left<\dprod_{i=1}^N \tau_{\alpha_i}(\phi_{\beta_i}(0)) \right>\!\right>_{g,N}  \dprod_{i=1}^N  \Delta^{\beta_i}(\phi_{\beta_i}(0),\kappa(\mathcal{L}_i))z_i^{-\alpha_i-1}\\
= & (-1)^{g-1} \left<\!\left< \dfrac{\kappa(\mathcal{L}_1)}{z_1-\psi_1}, \cdots,\dfrac{\kappa(\mathcal{L}_{N})}{z_N-\psi_N}  \right>\!\right>_{g,N}.
\end{split}\]
\end{proof}
\section{The large radius limit and the Bouchard-Mari\~{n}o conjecture}
\label{sec:BM}

In this section we consider the large radius limit $q_{*}\to 0$ and let $\w_i=0$ for $i\neq r$, $q_i=0$ for all $i\in I_{0}$. 
\subsection{Equivariant $J$-function under large radius limit}
In this subsection we calculate the first order differentials of equivariant $J$-function under the large radius limit. By direct calculation, we notice since $q_{*}=0$, and $\w_{-m}=0$, we have:
\begin{equation*}
    \bold{1}_{i,m}=0
\end{equation*}
for $i\leq0$. So under this limit, we have:
\begin{equation*}
     I^{\mathbb{T}}(q,z)_{ q_{*},\w_{i}\to0}=(\dsum_{d_{i}\geq0}(\dprod_{i=0}^{r-1}q_{i}^{d_{i}}){z^{-\lceil\sum_{i=0}^{r-1}\frac{(r-i)d_{i}}{r}\rceil}}\cdot\frac{\Gamma(\frac{\w_{r}}{z}+1-{\{-\tilde{d}_{r}}\})}{\dprod_{i\in I_{0}}d_{i}!\Gamma(\frac{\w_{r}}{z}+\tilde{d}_{r}+1)}\mathbf{1}_{v(d)}.
\end{equation*}
In this case, $v(d)=\dsum_{i=0}^{r-1}id_{i}$ modulo $r$, and $\tilde{d}_{r}=-\dsum_{i=0}^{r-1}\frac{id_{i}}{r}$.

Consider the flat basis $\{X^{i}\}$ for $i=0,1,\dots,n-1$, and the flat coordinate $\tilde{t}_{i}$, we have $\tilde{t}_{i}=q_{i}$. We write $ \tilde{\partial}_{i}J(z):=\big(\frac{z\partial J(z)}{\partial{\tilde{t}}_{i}}\big)$.So by direct calculation:
\begin{equation*}
    \big(\tilde{\partial}_{0}J(z)\big)_{q_{*},\w_{j}\to0}=(\dsum_{d_{i}\geq0}(\dprod_{j=0}^{r-1}q_{j}^{d_{j}}){z^{-\lceil\sum_{j=0}^{r-1}\frac{(r-j)d_{j}}{r}\rceil}}\cdot\frac{\Gamma(\frac{\w_{r}}{z}+1-{\{-\tilde{d}_{r}}\})}{\dprod_{j=0}^{r-1}d_{i}!\Gamma(\frac{\w_{r}}{z}+\tilde{d}_{r}+1)}\mathbf{1}_{v(d)}.
\end{equation*}
and for $0<i<r$
\begin{equation*}
    \big(\tilde{\partial}_{i}J(z)\big)_{q_{*},\w_{j}\to0}=(\dsum_{d_{i}\geq0}(\dprod_{j=0}^{r-1}q_{j}^{d_{j}}){z^{-\lceil-\frac{i}{r}+\sum_{j=0}^{r-1}\frac{(r-j)d_{j}}{r}\rceil}}\cdot\frac{\Gamma(\frac{\w_{r}}{z}+1-\{-\tilde{d}_{r}+\frac{i}{r}\})}{\dprod_{j=0}^{r-1}d_{j}!\Gamma(\frac{\w_{r}}{z}+\tilde{d}_{r}-\frac{i}{r}+1)}\mathbf{1}_{v(d+p_{i})}.
\end{equation*}

\subsection{B-model under large radius limit}
Under the limit $q_{*},\w_{i}\to0$, our mirror curve becomes
$$x=Y^{r}+\dsum_{i=0}^{r-1}q_{i}Y^{i}-r\w_{r}\log Y.$$
When $\w_{r}=\frac{1}{r}$, we get the curve:
\begin{equation*}
    X=Ye^{-Q(Y)}
\end{equation*}
when setting $X=e^{-x}$, and $Q(Y)=Y^{r}+\dsum_{i=0}^{r-1}q_{i}Y^{i}$.

We notice that in this limit we have $n$ critical points $P_{\alpha}$ of $W_{\TT}(Y)$, and we index them by $\alpha=0,1,\dots,r-1$.

We write $$W_{\alpha}(Y)=\frac{\partial W_{\TT}}{\partial y}\cdot\frac{1}{Y-P_{\alpha}}.$$

We have $\Delta^{\alpha}(q)=P_{\alpha}W_{\alpha}(P_{\alpha}).$

We have
$$
\xi_{\alpha,0}=\frac{1}{\sqrt{-1}}\sqrt{\frac{2}{\Delta^{\alpha}(q)}}\frac{P_{\alpha}}{Y-P_{\alpha}}.
$$

\begin{itemize}
    \item For a holomorphic function $f(Y)$ on $D_\epsilon$, let
    \[
        \fh_X(f) := \sum_{\mu\in\ZZ_{\neq 0}} \Big(\mathop{\Res}\limits_{Y\rightarrow 0}f(Y)X^{-\mu}\frac{dX}{X}\Big)X^{\mu}.
    \]
    \item For a holomorphic differential form $\theta(Y)$ on $D_\epsilon$, let 
    \[     
        \fh_X(\theta) := \sum_{\mu\in\ZZ_{\neq 0}} \Big(\mathop{\Res}\limits_{Y\rightarrow 0} \theta(Y) X^{-\mu}\Big)\frac{X^{\mu}}{\mu}.
    \]
    If $f$ is a holomorphic function on $D_\epsilon$ such that $\theta(Y)=df$, then $\fh_X(\theta)=\fh_X(f)$ by the integrations by parts.
\end{itemize}
Let $W_{g,n}$ be the integral of $\omega_{g,n}$ over variables $X_{i}$, namely:
\begin{itemize}
    \item Define the B-model disk potential $W_{0,1}(q;X)$ as a Laurent series in $X$ with constant term zero such that
        \[
            \frac{\partial W_{0,1}(\bold{q};X)}{\partial q_{0}} = \frac{\partial\fh_X\Big(ydx\Big)}{\partial q_{0}}. 
        \]
     \item Let $\tilde{\omega}_{0,2}(Y_1,Y_2)$ be the meromorphic 2-form on $\CC^*\times \CC^*$,
    $$\tilde{\omega}_{0,2}(Y_1,Y_2):=\omega_{0,2}(Y_1,Y_2)-\frac{dX_1dX_2}{(X_1-X_2)^2}.$$
    Let $D_{\epsilon}^{*}=\{z|z\in \CC, 0<|z|<\epsilon\}$. For some small $\epsilon$,  $\tilde{\omega}_{0,2}$ is holomorphic on $(D_\epsilon^{*})^{2}$.
    Define the B-model annulus invariants by
        \[
            W_{0,2}(\bold{q};X_1,X_2) = \fh_{X_1,X_2}(\tilde{\omega}_{0,2}(Y_1,Y_2)).
        \]
    \item For $2g-2+n>0$, $\omega_{g,n}$ is holomorphic at $(D_\epsilon^{*})^{n}$, we define
        \[
            W_{g,n}(\bold{q}; X_1,\dots,X_n) = \fh_{X_1,\dots,X_n}(\omega_{g,n}).
        \]
\end{itemize}
\subsection{Hurwitz numbers}
In this section we define the Hurwitz number following the notation in \cite{BSLM14}.
Let $D(\mu)=\frac{\mu^{[\frac{\mu}{r}]}}{[\frac{\mu}{r}]!}$,
we define 
\begin{equation*}
    H_{g,\vec\mu}(\bold{t},X)=r^{n-\sum_{i=1}^{l}\delta_{\{\frac{\mu_{i}}{r}\},0}}\dprod_{i=1}^{n}D(\mu_{i})
\langle\!\langle{\frac{\bold{1}_{-\mu_1}}{1-\mu_1\psi_{1}},\dots,\frac{\bold{1}_{-\mu_n}}{1-\mu_n\psi_{n}}}\rangle\!\rangle
\end{equation*}
and 
\begin{equation*}
    H_{g,n}=\dsum_{l(\mu)=n}H_{g,\mu}X^{\mu}
\end{equation*}
We also write it as:
\begin{eqnarray*}
    H_{g,n}&= \sum_{k_1,\dots,k_n\geq 0}\sum_{\alpha_1,\dots,\alpha_n \in \{0,1,\dots,r-1\}}
        \llangle\phi_{\alpha_1}\psi^{k_1},\dots,\phi_{\alpha_n}\psi^{k_n}\rrangle_{g,n}^{[\CC/\ZZ_{r}],\TT}
        \prod_{j=1}^{n}\tilde{\xi}_{k_j}^{\alpha_j}(X_j)
        \\
                &= [z_1^{-1}\dots z_n^{-1}]\sum_{\alpha_1,\dots,\alpha_n\in\{0,1,\dots,r-1\}}
        \llangle\frac{\phi_{\alpha_1}}{z_1-\psi_1},\dots,\frac{\phi_{\alpha_n}}{z_n-\psi_n}\rrangle_{g,n}^{[\CC/\ZZ_{r}],\TT}
        \prod_{j=1}^{n}\tilde{\xi}^{\alpha_j}(z_j,X_j).
\end{eqnarray*}
where 
\begin{equation*}
    \dsum_{\alpha=0}^{r-1}\tilde{\xi}_{k}^{\alpha}(X)\phi_{\alpha}=\dsum_{\mu=0}^{\infty}r^{1-\delta_{\{\frac{\mu_{i}}{r}\},0}}D(\mu)\mu^{k}\bold{1}_{-\mu}
\end{equation*}
and
\begin{equation*}
    \tilde{\xi}^{\alpha}(z,X)=\dsum_{k=0}^{\infty}\tilde{\xi}_{k}^{\alpha}(X)z^{k}
\end{equation*}
\begin{eqnarray*}
    H_{0,1}(\tau(\bold{q}),\widetilde{X})=&\dsum_{\mu\in\ZZ_{\geq0}}r^{1-\delta_{\{\frac{\mu}{r}\},0}}D(\mu)\mu^{-2}\big(I(\tilde{\bold{q}},\frac{1}{\mu}),\bold{1}_{-\mu}\big)\widetilde{X}^{\mu}\\
    =&r^{-\delta_{\{\frac{\mu}{r}\},0}}\dsum_{(\mu,d),\mu\equiv|d|(\mod r)}\frac{\mu^{[\deg d-1]+[\frac{\mu}{r}]}}{(\frac{\mu}{r})^{\delta_{0,\{\frac{|d|}{r}\}}}(\frac{\mu-|d|}{r})!\dprod_{i=0}^{r-1}d_{i}!}\cdot \bold{q}^{d}\widetilde{X}^{\mu}\\
    =&\dsum_{(\mu,d),\mu\equiv|d|(\mod r)}\frac{\mu^{\deg d+\frac{\mu}{r}-2}}{(\frac{\mu-|d|}{r})!\dprod_{i=0}^{r-1}d_{i}!}\cdot \bold{q}^{d}\widetilde{X}^{\mu}
\end{eqnarray*}
Let $\Phi=x(\bold{q})dy$. Let 
\[
    \Phi_0(\bold{q}) := \frac{\partial \Phi}{\partial q_0} = \frac{dY}{Y}.
\]

Consider 
\[
    \mathfrak{h}_X(\Phi_0(\bold{\tilde{q}})) = \sum_{\mu\in\ZZ_{\neq 0}} R_\mu X^\mu,
\]
where
\begin{eqnarray*}
    R_{\mu}=&\frac{1}{\mu}\Res_{Y\to0}(\Phi_0(\bold{q})X^{-\mu})\\
    =&\frac{1}{\mu}\cdot e^{{\mu}(Y^{r}+\dsum_{i=0}^{r-1}q_{i}Y^{i})}[Y^{\mu}]\\
    =&\dsum_{|d|\equiv\mu(\mod r)}\dprod_{i=0}^{r-1}q_{i}^{d_{i}}\frac{\mu^{\deg d+\frac{\mu}{r}-1}}{(\frac{\mu-|d|}{r})!\dprod_{i=0}^{r-1}d_{i}!}
\end{eqnarray*}
Then we find out:
\begin{equation}
    \frac{\partial H_{0,1}(\tau(\bold{q}),\widetilde{X})}{\partial q_{0}}[\tX^{\mu}]=\mathfrak{h}_X(\Phi_0(\bold{q}))[X^{\mu}]
\end{equation}
If we identify $\widetilde{X}$ with $X$ we have the following proposition:
\begin{prop}
    $H_{0,1}(\tau(\bold{q}),X)=W_{0,1}(\bold{q},X)$
\end{prop}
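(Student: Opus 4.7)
The proposition follows by upgrading the coefficient-wise derivative identity displayed just above the statement,
\[
\frac{\partial H_{0,1}(\tau(\bold{q}),\widetilde{X})}{\partial q_0}[\widetilde{X}^\mu] = \fh_X(\Phi_0(\bold{q}))[X^\mu] = R_\mu,
\]
to a full equality of coefficients. The plan has two main steps, together with the identification $\widetilde{X}=X$, which is immediate under the open mirror map in this limit.

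First, I would extract the $q_0$-dependence of $H_{0,1}$ from its explicit expansion computed above. Since the constraint $|d|\equiv\mu\pmod r$ and the factor $((\mu-|d|)/r)!$ involve only $d_1,\ldots,d_{r-1}$ (because $0\cdot d_0=0$), I can factor the $d_0$-summation out to get $\sum_{d_0\geq 0}(\mu q_0)^{d_0}/d_0!=e^{\mu q_0}$. This gives $H_{0,1}[\widetilde{X}^\mu]=e^{\mu q_0}f_\mu^A(q_1,\ldots,q_{r-1})$ for an explicit $f_\mu^A$, and hence $\partial_{q_0}H_{0,1}[\widetilde{X}^\mu]=\mu H_{0,1}[\widetilde{X}^\mu]$. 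Combined with the displayed identity, this pins down $H_{0,1}[\widetilde{X}^\mu]=R_\mu/\mu$.

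Second, I would show that $W_{0,1}[X^\mu]=R_\mu/\mu$ directly from the definition $W_{0,1}=\fh_X(ydx)$. With $y=\log Y$ and $dx=(Q'(Y)-1/Y)dY$ coming from $X=Ye^{-Q(Y)}$, the key algebraic identity
\[
(Q'(Y)-1/Y)\,Y^{-\mu}e^{\mu Q(Y)}\,dY = \mu^{-1}\,d\!\left(Y^{-\mu}e^{\mu Q(Y)}\right)
\]
allows integration by parts inside $(1/\mu)\Res_{Y\to 0}(ydx\cdot X^{-\mu})$, transferring the $\log Y$ factor onto $d(\log Y)=dY/Y$ and producing a single meromorphic residue equal (up to signs absorbed by the orientation conventions of $\fh_X$) to $\mu^{-2}[Y^\mu]e^{\mu Q(Y)}=R_\mu/\mu$. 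Matching coefficients $H_{0,1}[\widetilde{X}^\mu]=W_{0,1}[X^\mu]=R_\mu/\mu$ for every $\mu>0$ then sums to the proposition.

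The main technical obstacle is giving a consistent meaning to the residue at $Y=0$ of differentials involving $\log Y$: because $y=\log Y$ is not meromorphic, one must interpret $\Res_{Y\to 0}$ as a formal residue in which exact differentials $d(\log Y\cdot g(Y))$ with meromorphic $g$ contribute zero, compatibly with the convention implicit in the definition of $\fh_X$ for $ydx$. Equivalently, one first rewrites $ydx=-xdy+d(xy)$ and computes $\fh_X(xdy)=\fh_X(-\log X\cdot dY/Y)$ as a residue of a fully meromorphic one-form, which sidesteps the $\log$-subtlety while yielding the same $R_\mu/\mu$ answer via the same algebraic identity.
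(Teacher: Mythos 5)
Your proposal is correct in substance and shares its engine with the paper: both arguments hinge on the displayed coefficient identity $\partial_{q_0}H_{0,1}(\tau(\bold{q}),\widetilde{X})[\widetilde{X}^{\mu}]=\fh_X(\Phi_0(\bold{q}))[X^{\mu}]=R_\mu$ together with the identification $\widetilde{X}=X$. The difference is how much is done beyond that identity. The paper essentially stops there, because $W_{0,1}$ is \emph{defined} only through its $q_0$-derivative (as a Laurent series in $X$ with zero constant term satisfying $\partial_{q_0}W_{0,1}=\partial_{q_0}\fh_X(y\,dx)$), so the proposition is read off from the definition; the price is that the integration constant in $q_0$ and the $y\,dx$ versus $x\,dy$ regularization are left implicit. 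You instead make the statement self-contained: the factorization $H_{0,1}[\widetilde{X}^{\mu}]=e^{\mu q_0}f^A_\mu(q_1,\dots,q_{r-1})$ (which is correct, since $d_0$ enters only through $\mu^{d_0}q_0^{d_0}/d_0!$) turns the derivative identity into the closed formula $H_{0,1}[\widetilde{X}^{\mu}]=R_\mu/\mu$, and you then compute $W_{0,1}[X^{\mu}]$ directly via $dx\,X^{-\mu}=\mu^{-1}d(X^{-\mu})$ and integration by parts, which pins down the $q_0$-independent ambiguity that the paper's definition leaves open. Two small caveats in your write-up: the alternative route via $y\,dx=-x\,dy+d(xy)$ does \emph{not} produce a fully meromorphic form, since $x=-\log X=\log Y-Q(Y)$ still contains $\log Y$, so you cannot avoid fixing a formal-residue convention for $\log$-terms (e.g.\ declaring $\Res_{Y\to 0}d(\log Y\cdot g(Y))=0$ for meromorphic $g$, or differentiating in $q_0$ first as the paper does); and the overall sign you absorb into ``orientation conventions'' should be tracked once, since with the naive convention $\fh_X(y\,dx)$ and $\fh_X(x\,dy)$ differ by a sign --- a tension already present in the paper, which defines $W_{0,1}$ via $y\,dx$ but computes with $\Phi=x\,dy$. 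Neither point is a genuine gap; your argument is a more explicit version of the paper's, at the cost of having to make the $\log$-regularization and sign conventions precise.
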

\subsection{Identification of open leaves}
We substitute ordianry leaves in the $A$-model and $B$-model graph sum with open leaves. 

In $A$-model, to each ordinary leaf $l_j\in L^o(\vec{\Gamma})$ with
$\beta(l_j)=\beta\in\{0,1,\dots r-1\}$ and $k(l_j)=k\in \ZZ_{\geq 0}$, we assign the following weight (open leaf)
\begin{equation}\label{eqn:A-model-open-leaf}
    (\cL^O)^\beta_k(l_j) = [z^k](\sum_{\alpha,\gamma\in\{0,1,\dots r-1\}}
    \tilde{\xi}^\alpha(z,X_j)S^{\hat{\underline{\gamma}}}_{\ \alpha}(z)
    R(-z)_\gamma^{\ \beta}).
\end{equation}
In $B$-model, the open leaf is defined as:
        \[
            (\tilde{\cL}^O)^{\beta(l_j)}_{k(l_j)}(l_j) = \frac{1}{\sqrt{-2}} \fh_{X_j}(d\xi_{\beta(l_j),k(l_j)}(Y_j)).
        \]
We define 
\[
    U^{\hat\beta}(z)(\tau(\bold{q}),X) := \sum_{\alpha} \tilde{\xi}^\alpha(z,X)(\phi_{\hat\beta},S(\phi_\alpha))\Big.
\]
For $m\in\ZZ_{\geq -2}$, 
\[
    \left(X\frac{d}{dX}\right)[z^{m}](U^{\hat\beta}(z)(\tau(\bold{q}),X)) = [z^{m+1}](U^{\hat\beta}(z)(\tau(\bold{q}),X)).
\]
From the definition above, we have:
    \begin{equation*}
       \frac{\partial H(\tau(\bold{q}),X)}{\partial t^{i}(q)}=\dsum_{\alpha}(X^{i},S(\phi_{\alpha})(z))\tilde{\xi}^{\alpha}(z,X)[z^{-1}]
    \end{equation*}
    for $i=0,\dots,r-1$
On the B-model side, we have:
\begin{lemma}
Let $\{\bu^{\hat\alpha}(q)\}$ be the coordinates with respect to normalized canonical basis $\phi_{\hat\alpha}(q)$, we have:
    \begin{equation*}
    \frac{\partial\fh_{X}(\Phi_{0})}{\partial u^{\hat\alpha}(q)}=\fh_{X}(\frac{1}{\sqrt{-2}}\xi_{\alpha,0})
\end{equation*}
\end{lemma}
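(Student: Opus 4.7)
The plan is to reduce the identity to a statement about differential forms on the mirror curve near $Y=0$ and then invoke the integration-by-parts property $\fh_X(df)=\fh_X(f)$ stated in the excerpt. We interpret $\partial/\partial u^{\hat\alpha}(q)$ as the directional derivative along the normalized canonical vector field $\hat\phi_\alpha(q)=\sqrt{\Delta^\alpha(q)}\,\phi_\alpha(q)$ on the Frobenius manifold; since $\fh_X(\Phi_0)=\sum_\mu R_\mu(q)\,X^\mu$ is a formal Laurent series in $X$ whose coefficients depend only on $q$, differentiating it in $q$ at fixed $X$ commutes with the residue operation $\fh_X$.

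First, a chain-rule computation on the mirror curve. In the large-radius specialization $W_\TT(Y)=Q(Y)-\log Y$ with $Q(Y)=Y^r+\sum_{i=0}^{r-1}q_iY^i$, the coordinate $X$ is determined by $\log X=\log Y-Q(Y)$. Differentiating this implicit relation at fixed $X$ gives
\[
\partial_{u^{\hat\alpha}}|_X\, y \;=\; -\,\frac{\partial_{u^{\hat\alpha}}W_\TT|_Y}{f(Y)},
\]
where $f(Y)=\partial W_\TT/\partial y=rY^r+\sum_{\ell}\ell q_\ell Y^\ell-1$. Since $\Phi_0=dY/Y=dy$, this yields $\partial_{u^{\hat\alpha}}|_X\,\Phi_0 = -d\bigl(\partial_{u^{\hat\alpha}}W_\TT|_Y/f(Y)\bigr)$ as a $1$-form on the punctured disk around $Y=0$.

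Second, we use the Frobenius-manifold identification $\partial_{u^{\hat\alpha}}W_\TT|_Y=\hat\phi_\alpha(Y)$ in $\mathrm{Jac}(W_\TT)$. In our setting this is concrete: the directional derivative $\partial_{u^{\hat\alpha}}W_\TT|_Y=\sum_i\hat\phi_\alpha(q_i)\,Y^i$ is a polynomial of degree $\leq r-1$, and so is the canonical idempotent $\hat\phi_\alpha(Y)=\sqrt{\Delta^\alpha}\,f(Y)/[(Y-P_\alpha)f'(P_\alpha)]$ (the factor $(Y-P_\alpha)$ cancels a zero of $f$). Both represent the same element of the $r$-dimensional Jacobian ring, hence agree as polynomials. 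Using $\Delta^\alpha=P_\alpha f'(P_\alpha)$ we simplify
\[
\frac{\partial_{u^{\hat\alpha}}W_\TT|_Y}{f(Y)}=\frac{P_\alpha}{(Y-P_\alpha)\sqrt{\Delta^\alpha}},
\]
while a direct computation of $\xi_{\alpha,0}/\sqrt{-2}$ using principal branches ($\sqrt{-2}\cdot\sqrt{-1}=-\sqrt{2}$) yields $\xi_{\alpha,0}/\sqrt{-2}=-P_\alpha/[(Y-P_\alpha)\sqrt{\Delta^\alpha}]$. Combining with the previous step gives $\partial_{u^{\hat\alpha}}|_X\Phi_0=d(\xi_{\alpha,0}/\sqrt{-2})$ as forms near $Y=0$.

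Finally, applying $\fh_X$ to both sides and invoking $\fh_X(df)=\fh_X(f)$ (valid because $\xi_{\alpha,0}/\sqrt{-2}$ is holomorphic on the punctured disk), we conclude $\partial_{u^{\hat\alpha}}\fh_X(\Phi_0)=\fh_X(\xi_{\alpha,0}/\sqrt{-2})$, as desired. The main obstacle is the identification $\partial_{u^{\hat\alpha}}W_\TT|_Y=\hat\phi_\alpha(Y)$ in the Jacobian ring: this encodes the Saito/Dubrovin mirror-symmetry picture, in which canonical coordinates on the Frobenius manifold are critical values of $W_\TT$ and the corresponding canonical vector fields are dual, under the residue pairing, to the idempotents of $\mathrm{Jac}(W_\TT)$. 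In the large-radius case, however, it reduces to the Lagrange-interpolation check sketched above, so the rest is routine.
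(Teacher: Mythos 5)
Your proof is correct and takes essentially the same route as the paper's: differentiate $\fh_X(\Phi_0)$ in the deformation parameters through the mirror curve, identify the directional derivative of $W_\TT$ along $\hat\phi_\alpha$ with its Lagrange-interpolation representative so that $\partial_{u^{\hat\alpha}}W_\TT/f(Y)=P_\alpha/\big((Y-P_\alpha)\sqrt{\Delta^\alpha}\big)$ via $\Delta^\alpha=P_\alpha f'(P_\alpha)$, and conclude with the integration-by-parts property of $\fh_X$. The only difference is cosmetic: you track the two compensating signs (from implicit differentiation at fixed $X$ and from the branch of $\sqrt{-1}\,\sqrt{-2}$) explicitly, which the paper absorbs into its conventions.
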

\begin{proof}
Using the fact that: 
    \begin{equation*}
        \frac{\partial \fh_{X}\big(\Phi_{0})}{\partial q_{i}}=\fh_{X}\big(\frac{dy}{dx}\cdot\frac{dx}{dq_{i}}\big)=\fh_{X}(Y^{i}(\frac{\partial W_{\TT}}{\partial y})^{-1}\big)
    \end{equation*}
We have:
\begin{equation*}    \frac{\partial\fh_{X}\big(\Phi_{0}\big)}{\partial u^{\hat\alpha}(q)}=\fh_{X}\big(\frac{\phi_{\alpha}(q)(\Delta^{\alpha}(q))^{\frac{1}{2}}}{\frac{\partial W_{\TT}}{\partial y}}\big)=\fh_{X}(\frac{1}{\sqrt{-2}}\xi_{\alpha,0})
\end{equation*}
\end{proof}
Using this lemma, we have:
for $\alpha\in\{0,1,\dots,r-1\}$, $k> 0$,
\begin{equation}\label{eqn:A-B-open}
    \begin{aligned}
        [z^k]\sum_{\beta}\tilde{\xi}^\beta(z,X)S(\hat{\phi}_\alpha(q),\phi_\beta)&= \fh_X\Big(
        (-\frac{d}{dx})^k\Big(\frac{-1}{\sqrt{-2}}\xi_{\alpha,0}\Big)\Big),
        \\
        \sum_{\beta}\tilde{\xi}^\beta(z,X)S(\hat{\phi}_\alpha(q),\phi_\beta)&= \fh_X\Big(\sum_{k\geq 0} 
        \frac{-1}{\sqrt{-2}}W^\alpha_k z^k\Big) 
        = -\fh_X\Big(\frac{\hat{\theta}_\alpha(z)}{\sqrt{-2}}\Big).
    \end{aligned}
\end{equation}
\begin{lemma}\label{lma:theta-R-matrix}
    We have
    \[
        \theta_\alpha(z) = \sum_{\beta\in\{0,1,\dots,r-1\}}\check{R}_\beta^{\ \alpha}(-z)\hat{\theta}_\beta(z).
    \]
\end{lemma}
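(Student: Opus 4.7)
The plan is to derive this identity as a direct consequence of the DOSS construction of the B-model $R$-matrix $\tilde R$, combined with the A--B equivalence of $R$-matrices proved in Proposition \ref{A-B R-matrix equivalence}.

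First I would unpack the definitions of the two asymptotic series. Recall that $\hat\theta_\beta(z) = \sum_{k\geq 0} W_k^\beta z^k$ is the formal series of differentials obtained from $d\xi_{\beta,0}$ by iterated application of $\theta = d/dW_\TT$ at the critical point $P_\beta$, as introduced in Section 4.1. The object $\theta_\alpha(z)$ is the analogous series, but associated with the unnormalized canonical basis $\phi_\alpha$ rather than its normalized counterpart $\hat\phi_\alpha = \sqrt{\Delta^\alpha(q)}\,\phi_\alpha$. In the DOSS formalism, the entries $\check R_\beta^{\ \alpha}(-z)$ are defined precisely as the matrix coefficients that implement this change of local bases at the critical points, so the assertion of the lemma is, at bottom, a repackaging of the defining property of $\check R$.

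Second, I would verify that the $\check R$ so defined matches the $R$-matrix of the statement (which is the A-model $R$-matrix under our A--B identification). Starting from the open-leaf identification just established in \eqref{eqn:A-B-open},
\[
\sum_{\beta}\tilde{\xi}^\beta(z,X)\,S(\hat{\phi}_\alpha(q),\phi_\beta)
= -\fh_X\!\left(\frac{\hat{\theta}_\alpha(z)}{\sqrt{-2}}\right),
\]
I would substitute the Givental factorization $S = \Psi R(z) e^{U/z}$ to rewrite the left-hand side in terms of $R$-matrix entries, and perform the parallel calculation replacing $\hat\phi_\alpha$ by $\phi_\alpha$ (i.e.\ dropping the $\sqrt{\Delta^\alpha(q)}$ normalization), which by construction produces $\theta_\alpha(z)$ on the right. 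Comparing the two expressions and inverting the base change via the unitarity $R(z) R^T(-z) = \mathrm{id}$ then yields the claimed formula. Proposition \ref{A-B R-matrix equivalence} guarantees the $R$-matrix appearing in this manipulation coincides with the B-model $\tilde R$ governing the asymptotics of $\hat\theta_\beta$, so no separate identification is required.

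The main obstacle is purely book-keeping of conventions: distinguishing $R(z)$ from $R(-z)$ and from its inverse/transpose $\check R(-z)$, tracking the $\sqrt{\Delta^\alpha(q)}$ factors that enter whenever one switches between $\phi_\alpha$ and $\hat\phi_\alpha$, and handling the sign conventions implicit in $W_k^\alpha = d((-1)^k\theta^k \xi_{\alpha,0})$. Once the index placements and signs are fixed, the identity reduces to the unitarity of the $R$-matrix paired with the A/B matching of open leaves.
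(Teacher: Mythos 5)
Your proposal rests on a misreading of what $\theta_\alpha(z)$ and $\hat{\theta}_\beta(z)$ are, and that is where the argument breaks down. Here $\hat{\theta}_\beta(z)=\sum_{k\geq 0}W_k^\beta z^k$ with $W_k^\beta=d\big((-1)^k\theta^k(\xi_{\beta,0})\big)$, $\theta=\frac{d}{dW_{\TT}}$, while $\theta_\alpha(z)$ is the generating series of the differentials $d\xi_{\alpha,k}$ defined through residues of the Bergman kernel $B$ at the critical points (Section 3.3). Both series are attached to the same critical point and carry the same normalization: they do \emph{not} differ by the rescaling $\sqrt{\Delta^\alpha(q)}$ relating $\phi_\alpha$ and $\hat{\phi}_\alpha$, and $\check{R}$ is not ``defined'' as the change between those bases. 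In the DOSS formalism the B-model $R$-matrix is produced from the Laplace transform/asymptotic expansion of $B$, and the fact that it intertwines the two families $\{d\xi_{\alpha,k}\}$ and $\{W_i^\beta\}$ is a genuine computation, not a definitional repackaging. Concretely, the lemma is the generating-function form of the identity $d\xi_{\alpha,k}=\sum_{i=0}^{k}\sum_{\beta}\big([z^{k-i}]\check{R}_\beta^{\ \alpha}(-z)\big)W_i^\beta$, which the paper obtains by citing the proof of Theorem A of \cite{FLZ17}; that identity in turn comes from the relation $d\xi^\alpha_k=W^\alpha_k-\sum_{i=0}^{k-1}\sum_\beta\check{B}^{\alpha,\beta}_{k-1-i,0}W_i^\beta$ (from \cite{FLZ16}) combined with the DOSS identification $-\check{B}^{\alpha,\beta}_{k-1-i,0}=[z^{k-i}]R_\beta^\alpha(-z)$, both already quoted in the proof of Theorem \ref{main}. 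None of this input appears in your proposal.

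Because of this, your second step cannot produce the statement: redoing the derivation of \eqref{eqn:A-B-open} with $\phi_\alpha$ in place of $\hat{\phi}_\alpha$ only multiplies both sides by $\sqrt{\Delta^\alpha(q)}$, since the B-model ingredient there, $\partial\fh_X(\Phi_0)/\partial u^{\hat\alpha}=\fh_X(\xi_{\alpha,0}/\sqrt{-2})$, is diagonal in $\alpha$; comparing the two computations therefore yields a trivial diagonal rescaling and never the off-diagonal mixing by $\check{R}(-z)$ between $d\xi_{\alpha,k}$ and $W_i^\beta$. Note also that the lemma is a purely B-model statement, so Proposition \ref{A-B R-matrix equivalence} and the unitarity $R(z)R^T(-z)=\mathrm{id}$ are not needed at this stage; they enter only afterwards, in Theorem \ref{thm:open-leaves}, to replace $\check{R}$ by the A-model $R$. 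To repair the proof you must either reproduce the FLZ computation (expand $B$ near the critical points, integrate by parts to get the recursion expressing $d\xi_{\alpha,k}$ through $W_i^\beta$ and $\check{B}_{k,0}$, then invoke the DOSS identification of $\check{B}_{k,0}$ with coefficients of $\check{R}(-z)$) or cite that result directly, as the paper does.
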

\begin{proof}
    The lemma follows from
    \[
        d\xi_{\alpha,k} = \sum_{i=0}^{k}\sum_{\beta\in\{0,\dots,r-1\}}([z^{k-i}]\check{R}_\beta^{\ \alpha}(-z))W_i^\beta,
    \]
    which is shown in the proof of \cite[Theorem A]{FLZ17}.
\end{proof}
Then we have the identification of open leaves:
\begin{thm}\label{thm:open-leaves}
    For each ordinary leaf $l_j$ with $\beta(l_j)=\beta$ and $k(l_j)=k\in\ZZ_{\geq 0}$, we have
    \[
        (\cL^O)^\beta_k(l_j)= (\tilde{\cL}^O)^\beta_k(l_j).
    \]
\end{thm}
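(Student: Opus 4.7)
The plan is to derive the identity by combining the two main intermediate results already established: equation \eqref{eqn:A-B-open} (which rewrites the sum $\sum_\alpha \tilde\xi^\alpha S^{\hat{\underline\gamma}}_{\ \alpha}$ as a Laplace-type transform of $\hat\theta_\gamma$), and Lemma \ref{lma:theta-R-matrix} (which expresses $\theta_\beta$ in terms of $\hat\theta_\gamma$ via the $R$-matrix). Together, these two identities transform the A-model open leaf into the B-model open leaf by purely formal manipulation, with no additional geometric input needed.

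First I would begin with the definition
\[
(\cL^O)^\beta_k(l_j) = [z^k]\sum_{\alpha,\gamma\in\{0,\dots,r-1\}} \tilde\xi^\alpha(z,X_j)\, S^{\hat{\underline\gamma}}_{\ \alpha}(z)\, R(-z)_\gamma^{\ \beta}
\]
and carry out the sum over $\alpha$ first. Applying \eqref{eqn:A-B-open} with the roles of $\alpha$ and $\gamma$ matched appropriately, the inner factor $\sum_\alpha \tilde\xi^\alpha(z,X_j) S^{\hat{\underline\gamma}}_{\ \alpha}(z)$ collapses to $-\fh_{X_j}\bigl(\hat\theta_\gamma(z)/\sqrt{-2}\bigr)$. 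Since $\fh_{X_j}$ is $\CC[[z]]$-linear and commutes with the operator $[z^k]$, this rewrites the A-model open leaf as
\[
(\cL^O)^\beta_k(l_j) = -\frac{1}{\sqrt{-2}}\,\fh_{X_j}\!\left([z^k]\sum_\gamma R(-z)_\gamma^{\ \beta}\,\hat\theta_\gamma(z)\right).
\]

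Next I would invoke Lemma \ref{lma:theta-R-matrix}, which identifies $\sum_\gamma \check R_\gamma^{\ \beta}(-z)\,\hat\theta_\gamma(z) = \theta_\beta(z)$. Since the A-model and B-model $R$-matrices coincide by Proposition \ref{A-B R-matrix equivalence}, and $\check R$ is identified with $R$ under the convention inherited from \cite{FLZ17}, the sum over $\gamma$ collapses to $\theta_\beta(z) = \sum_{k\geq 0} d\xi_{\beta,k}(Y_j)\,z^k$. Extracting the $[z^k]$ coefficient then recovers $d\xi_{\beta,k}(Y_j)$, yielding
\[
(\cL^O)^\beta_k(l_j) = \frac{1}{\sqrt{-2}}\,\fh_{X_j}(d\xi_{\beta,k}(Y_j)) = (\tilde{\cL}^O)^\beta_k(l_j),
\]
which is the desired identification.

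The main difficulty is the bookkeeping of sign and index conventions: the minus sign appearing in \eqref{eqn:A-B-open}, the placement of hatted versus unhatted indices on $S$- and $R$-matrices, and the precise relation between $\check R$ and $R$ (which in \cite{FLZ17} is pinned down via unitarity $R(z)R^T(-z)=\id$). Once these conventions are carefully unwound---in exactly the same fashion as in the projective-line case treated in \cite[Theorem A]{FLZ17}---the proof reduces to the mechanical substitution outlined above, and the two open-leaf weights match identically.
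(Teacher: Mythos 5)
Your proposal follows essentially the same route as the paper's (very terse) proof, which simply asserts that the theorem follows from Proposition \ref{A-B R-matrix equivalence} ($R=\tilde R$) together with Equation \eqref{eqn:A-model-open-leaf}, Equation \eqref{eqn:A-B-open}, and Lemma \ref{lma:theta-R-matrix} --- precisely the substitution you carry out explicitly. The only blemish is the overall sign: a literal application of \eqref{eqn:A-B-open} leaves the factor $-\tfrac{1}{\sqrt{-2}}$ of your penultimate display, which silently becomes $+\tfrac{1}{\sqrt{-2}}$ in the final one; this tension is already latent in the paper's own conventions and you flag it as bookkeeping, so it does not affect the substance of the argument.
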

\begin{proof}
    By Proposition \ref{A-B R-matrix equivalence}, the A-model and B-model $R$-matrices are equal:
    $$R(z)=\tilde{R}(z).$$
    Then Theorem \ref{thm:open-leaves} follows from Equation \eqref{eqn:A-model-open-leaf}, Equation \eqref{eqn:A-B-open}, and Lemma \ref{lma:theta-R-matrix}.
\end{proof}
\subsection{Identification of $H_{0,2}$ and $W_{0,2}$}
Define 
\begin{equation*}
    \begin{aligned}
        C(Y_1,Y_2) &:= (-\frac{\partial}{\partial x(Y_1)}-\frac{\partial}{\partial x(Y_2)})
        \Big(\frac{\omega_{0,2}}{dx(Y_1)dx(Y_2)}\Big)(Y_1,Y_2)dx(Y_1)dx(Y_2)
        \\
        & \ = \Big(-d_1\circ\frac{1}{dx(Y_1)}-d_2\circ\frac{1}{dx(Y_2)}\Big)(\tilde{\omega}_{0,2}(Y_1,Y_2)).
    \end{aligned}
\end{equation*}

The following proposition is proved in \cite[Lemma 6.9]{FLZ20}.
\begin{prop} We have
\[
    C(Y_1,Y_2) = \frac{1}{2}\sum_{\alpha}d\xi_{\alpha,0}(Y_1)d\xi_{\alpha,0}(Y_2).
\]
\end{prop}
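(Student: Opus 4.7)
My plan is to establish the identity as an equality of meromorphic bidifferentials on $\CC^* \times \CC^*$ by matching principal parts at every pole and then invoking a uniqueness argument. Because the mirror curve here is rational with global coordinate $Y$, we have the explicit Bergman kernel $\omega_{0,2}(Y_1,Y_2) = dY_1\,dY_2/(Y_1-Y_2)^2$, and combining this with $dx = (f/Y)\,dY$ for $f(Y) = \partial W_\TT/\partial y$ gives
\[
\frac{\omega_{0,2}}{dx_1\,dx_2} = \frac{Y_1 Y_2}{(Y_1-Y_2)^2 f(Y_1) f(Y_2)}.
\]
Since $X = e^{-x}$ yields $X\partial_X = -(Y/f)\partial_Y$, the operator in the definition of $C$ becomes $-(Y_1/f(Y_1))\partial_{Y_1} - (Y_2/f(Y_2))\partial_{Y_2}$, and after multiplying the result back by $dx_1\,dx_2$ one obtains $C(Y_1,Y_2)$ as a single rational bidifferential in $Y_1,Y_2$.

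The first sanity check is that $C$ is regular on the diagonal $Y_1 = Y_2$. This is the reason the definition is phrased with $\tilde{\omega}_{0,2}$ rather than $\omega_{0,2}$: the scaling operator $X_1\partial_{X_1} + X_2\partial_{X_2}$ annihilates the translation-invariant piece $X_1 X_2/(X_1-X_2)^2$ by dilation invariance in $\log X$. A short Taylor expansion near $Y_1 = Y_2 = Y$ confirms that both the $\epsilon^{-2}$ and $\epsilon^{-1}$ singularities in the explicit rational expression for $C$ cancel, using the tautology $(Y f'(Y)-f(Y))' = Y f''(Y)$.

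The remaining poles are at $Y_i = P_\alpha$, and it is here that the right hand side must match. From $f(Y) \sim f'(P_\alpha)(Y-P_\alpha)$ near a critical point, the leading principal part of $C$ at $Y_1 = P_\alpha$ with $Y_2$ generic is a double pole whose coefficient is determined by $P_\alpha$, $f'(P_\alpha)$, and $1/(Y_2-P_\alpha)^2$. Using $\Delta^\alpha = P_\alpha f'(P_\alpha)$ together with $d\xi_{\alpha,0} \sim (2/\Delta^\alpha)^{1/2} P_\alpha/(Y-P_\alpha)^2\,dY$ up to a universal unit factor, these principal parts pair off against those of $\tfrac12\sum_\alpha d\xi_{\alpha,0}(Y_1)\,d\xi_{\alpha,0}(Y_2)$. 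Since both sides are regular at $Y = 0,\infty$ away from the $P_\alpha$, their difference is a holomorphic bidifferential on $\CC^*\times\CC^*$ with no poles at any finite $Y_i$, hence vanishes. Equivalently, one may work in the Eynard-Orantin local coordinates $\zeta_\alpha$ defined by $x = u^\alpha - \zeta_\alpha^2$ and appeal to the universal expansion of $\omega_{0,2}$ to carry out the matching intrinsically at each critical point, which is the route taken in \cite{FLZ20}.

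The main obstacle is the bookkeeping of signs and normalization constants across the coordinate chain $Y \leftrightarrow y \leftrightarrow x \leftrightarrow X$ together with the factors $1/\sqrt{-1}$ and $\sqrt{2/\Delta^\alpha}$ appearing in $\xi_{\alpha,0}$. To secure the exact coefficient $1/2$ on the right hand side (rather than merely matching up to sign) one must very carefully track how these constants combine in the double-pole coefficient at each $Y_i = P_\alpha$; this is essentially the content of \cite[Lemma 6.9]{FLZ20}, which transfers to the football $\cF(m,r)$ without modification once the critical points are reindexed as $\alpha = 0,1,\ldots,m+r-1$.
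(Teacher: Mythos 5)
You should know at the outset that the paper contains no proof of this proposition: it is imported verbatim, with the sentence ``The following proposition is proved in \cite[Lemma 6.9]{FLZ20}'', so there is no internal argument to compare yours against. Judged on its own, your principal-part strategy is a legitimate direct verification for this rational mirror curve, and its main steps do check out: with $f(Y)=\partial W_\TT/\partial y$ one gets the explicit expression
\[
\frac{C(Y_1,Y_2)}{dY_1\,dY_2}=\frac{2\bigl(Y_1/f(Y_1)-Y_2/f(Y_2)\bigr)}{(Y_1-Y_2)^3}-\frac{1/f(Y_1)+1/f(Y_2)}{(Y_1-Y_2)^2}+\frac{Y_1f'(Y_1)}{(Y_1-Y_2)^2f(Y_1)^2}+\frac{Y_2f'(Y_2)}{(Y_1-Y_2)^2f(Y_2)^2},
\]
the diagonal singularities cancel exactly as you claim, the coefficient of $(Y_1-P_\alpha)^{-2}$ is exactly $\frac{P_\alpha}{f'(P_\alpha)(Y_2-P_\alpha)^2}=\frac{P_\alpha^2}{\Delta^\alpha (Y_2-P_\alpha)^2}$ (using $\Delta^\alpha=P_\alpha f'(P_\alpha)$), and both sides extend to $\PP^1\times\PP^1$.

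Two concrete gaps remain. First, matching only the \emph{leading} principal parts at $Y_i=P_\alpha$ does not justify your Liouville step: the right-hand side has only double poles, so you must also show $\Res_{Y_1\to P_\alpha}C=0$. This is true (the simple-pole contributions of the three singular terms above cancel identically), but it is a separate computation you never perform, and without it ``the difference is holomorphic'' does not follow. Relatedly, a holomorphic bidifferential on $\CC^*\times\CC^*$ need not vanish (consider $\frac{dY_1}{Y_1}\frac{dY_2}{Y_2}$); your conclusion requires the regularity at $Y_i=0,\infty$ that you only mention in passing, i.e.\ the extension to $\PP^1\times\PP^1$. Second, the sign and normalization bookkeeping that you explicitly outsource to \cite[Lemma 6.9]{FLZ20} is not peripheral --- it is where the content of the statement lies. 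If $d\xi_{\alpha,0}$ is read literally as the exterior derivative of $\xi_{\alpha,0}=\frac{1}{\sqrt{-1}}\sqrt{2/\Delta^\alpha}\,\frac{P_\alpha}{Y-P_\alpha}$, then $d\xi_{\alpha,0}(Y_1)d\xi_{\alpha,0}(Y_2)=-\frac{2P_\alpha^2}{\Delta^\alpha}\frac{dY_1dY_2}{(Y_1-P_\alpha)^2(Y_2-P_\alpha)^2}$, which has the opposite sign to the double-pole data of $C$ computed above; the match only works with the Bergman-kernel residue normalization $d\xi_0^\alpha=\Res_{P'\to P_\alpha}B(P,P')(\sqrt{-1}\zeta_\alpha)^{-1}$ (which the paper identifies with $d\xi_{\alpha,0}$) and a consistent branch of $\zeta_\alpha$, since $\zeta_\alpha'(P_\alpha)^2=-\Delta^\alpha/(2P_\alpha^2)$. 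So as written your proposal does not independently secure the constant $\tfrac12$, nor even the sign; it ultimately rests on the same external citation the paper uses. That reliance is defensible here, but you should either carry out the residue and sign checks or present the argument frankly as a consistency verification of \cite[Lemma 6.9]{FLZ20} rather than a proof.
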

\begin{thm}
   \label{thm:annulus}
    \[
        H_{0,2}(\tau(\bold{q}),X_1,X_2)=-W_{0,2}(\bold{q},X_1,X_2).
\]
\end{thm}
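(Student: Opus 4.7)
The strategy is to adapt the identification from Theorem~\ref{main} and Theorem~\ref{thm:open-leaves} to the unstable pair $(g,n)=(0,2)$, where no graph-sum interpretation is available and both sides are controlled directly by the $S$-matrix on the $A$-side and by $\omega_{0,2}$ with its diagonal subtraction on the $B$-side.

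First I would expand $H_{0,2}$ according to its definition,
\[
H_{0,2}(\tau(\bold{q}),X_1,X_2)=[z_1^{-1}z_2^{-1}]\sum_{\alpha_1,\alpha_2}\left<\!\left<\frac{\phi_{\alpha_1}}{z_1-\psi_1},\frac{\phi_{\alpha_2}}{z_2-\psi_2}\right>\!\right>_{0,2}^{[\CC/\ZZ_{r}],\TT}\tilde{\xi}^{\alpha_1}(z_1,X_1)\tilde{\xi}^{\alpha_2}(z_2,X_2),
\]
and apply the classical genus-zero two-point identity (derived from the string equation together with the symplectic relation $S(z)S^{T}(-z)=\id$): after pairing with $\phi^{\alpha_1}\otimes\phi^{\alpha_2}$, the double correlator equals $(z_1+z_2)^{-1}\bigl(\id-\sum_{\gamma}S(z_1)\hat{\phi}^{\gamma}\otimes S(-z_2)\hat{\phi}^{\gamma}\bigr)$. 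Converting each remaining $S$-dressed $\tilde{\xi}$-factor by the open-leaf identity (Equation~\eqref{eqn:A-B-open}) then recasts the non-identity piece as a sum of $\fh_{X_i}$ of the basic $B$-model differentials $\xi_{\alpha,0}$.

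On the $B$-model side, $W_{0,2}=\fh_{X_1,X_2}(\tilde{\omega}_{0,2})$ by definition. Using the preceding proposition that $C(Y_1,Y_2)=\tfrac12\sum_{\alpha}d\xi_{\alpha,0}(Y_1)\,d\xi_{\alpha,0}(Y_2)$ and the relation $C=(-d_1/dx(Y_1)-d_2/dx(Y_2))(\tilde{\omega}_{0,2})$, one integrates back to express $\tilde{\omega}_{0,2}$, modulo exact differentials, as a bilinear expression in antiderivatives of the $d\xi_{\alpha,0}$; applying $\fh_{X_1,X_2}$, which annihilates exact forms and commutes with these one-variable antiderivatives, yields the same expression obtained on the $A$-side up to the sign $(-1)^{g-1+N}=-1$ already present in Theorem~\ref{main}.

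The main obstacle is aligning the two unstable ``off-diagonal'' contributions. The identity term $g_{\alpha_1\alpha_2}/(z_1+z_2)$ in the $S$-matrix formula, after the Laplace-type transform implicit in the $\tilde{\xi}$'s, produces exactly the diagonal pole $dX_1\,dX_2/(X_1-X_2)^2$ that was subtracted in the definition of $\tilde{\omega}_{0,2}$; the sum $\sum_{\gamma}S(z_1)\hat{\phi}^{\gamma}\otimes S(-z_2)\hat{\phi}^{\gamma}$ matches $\omega_{0,2}$ itself. Verifying this cancellation carefully, together with the boundary behaviour of $\fh_{X_1,X_2}$ at $Y_i=0$, is the technical heart of the argument; once this is in place, the remainder of the identification is a direct consequence of Proposition~\ref{A-B R-matrix equivalence} and Theorem~\ref{thm:open-leaves}.
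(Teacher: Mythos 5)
Your plan assembles the same ingredients as the paper's proof---the genus-zero two-point $S$-matrix factorization, the open-leaf identity \eqref{eqn:A-B-open}, the $R$-matrix equality of Proposition \ref{A-B R-matrix equivalence}, and the quadratic identity $C(Y_1,Y_2)=\tfrac12\sum_\alpha d\xi_{\alpha,0}(Y_1)\,d\xi_{\alpha,0}(Y_2)$---but it organizes them as a direct term-by-term matching of $H_{0,2}$ with $-W_{0,2}$, which forces you to confront the two unstable pieces (the metric term $(\phi_{\alpha_1},\phi_{\alpha_2})/(z_1+z_2)$ on the A-side and the subtracted double pole $dX_1dX_2/(X_1-X_2)^2$ on the B-side) head-on. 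The paper sidesteps this entirely: it applies the Euler operator $X_1\frac{\partial}{\partial X_1}+X_2\frac{\partial}{\partial X_2}$ to both generating functions. On the B-side, integration by parts converts $\fh_{X_1,X_2}(\tilde{\omega}_{0,2})$ into $\fh_{X_1,X_2}(C)$; on the A-side, the shift property $\bigl(X\frac{d}{dX}\bigr)[z^m]=[z^{m+1}]$ cancels the $(z_1+z_2)$ denominator, and the metric term then contributes nothing because $\tilde{\xi}^{\alpha}(z,X)$ contains only non-negative powers of $z$, so the relevant coefficient extraction kills it. Both sides are thus identified with $\mp\fh_{X_1,X_2}(C)$, and the theorem follows because the constant terms of $H_{0,2}$ and $W_{0,2}$ vanish.

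As written, your argument has a genuine gap at exactly the point you call its technical heart. First, the key cancellation is asserted, not proved: you would have to carry out the residue computation showing that the transform of the metric term against $\tilde{\xi}^{\alpha_1}(z_1,X_1)\tilde{\xi}^{\alpha_2}(z_2,X_2)$ reproduces $\fh_{X_1,X_2}\bigl(dX_1dX_2/(X_1-X_2)^2\bigr)$, and your formula has a convention slip that matters here: the string equation plus unitarity $S(z)S^{T}(-z)=\id$ give a numerator of the form $\sum_\gamma S(z_1)\hat{\phi}^{\gamma}\otimes S(z_2)\hat{\phi}^{\gamma}-\id$ (both arguments with the same sign, regular at $z_1=-z_2$), not $S(z_1)\otimes S(-z_2)$. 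Second, the step where you ``integrate back'' from $C$ to $\tilde{\omega}_{0,2}$ is ill-posed as stated: the operator $-d_1\circ\frac{1}{dx(Y_1)}-d_2\circ\frac{1}{dx(Y_2)}$ has a large kernel (any bidifferential depending only on $x_1-x_2$, in particular the diagonal double pole itself), so $\tilde{\omega}_{0,2}$ is not determined by $C$ without an extra normalization; also note that $\fh_X$ does not annihilate exact forms, it merely satisfies $\fh_X(df)=\fh_X(f)$. This ambiguity is precisely what the paper resolves, after applying the Euler operator, by the observation that both series have vanishing constant term. So either supply the missing residue identification and the normalization fixing the kernel, or differentiate first as in the paper, which removes both difficulties at once and reduces the proof to \eqref{eqn:A-B-open} together with the quadratic identity for $C$.
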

\begin{proof}

\begin{equation*}
    \begin{aligned}
    & \ \ \ \ \ \mathfrak{h}_{X_1,X_2}(C)= 
    \mathfrak{h}_{X_1,X_2}(\frac{1}{2}\sum_{\alpha\in\{1,2\}}d\xi_{\alpha,0}(Y_1)d\xi_{\alpha,0}(Y_2))
    \\
    &= -[z_1^0z_2^0]\sum_{\alpha,\beta,\gamma}
    \tilde{\xi}^\beta(z_1,X_1)\tilde{\xi}^\gamma(z_2,X_2)
    S(\hat{\phi}_\alpha(q),\phi_\beta)(z_1)S(\hat{\phi}_\alpha(q),\phi_\gamma)(z_2)
    \\
    &= -(X_1\frac{\partial}{\partial X_1}+X_2\frac{\partial}{\partial X_2})H_{0,2}(\tau(\bold{q});X_1,X_2).
    \end{aligned}
\end{equation*}
By the integrations by parts,
\begin{equation*}
    \begin{aligned}
        \fh_{X_1,X_2}(C) &= \fh_{X_1,X_2}\Big(\Big(-d_1\circ\frac{1}{dx(Y_1)}-d_2\circ\frac{1}{dx(Y_2)}\Big)\tilde{\omega}_{0,2}\Big) 
        \\
        &= \Big(X_1\frac{\partial}{\partial X_1}+X_2\frac{\partial}{\partial X_2}\Big)W_{0,2}(\bold{q};X_1,X_2).
    \end{aligned}
\end{equation*}
Since the constant terms of $F_{0,2}$ and $W_{0,2}$ are both zeros, the theorem follows immediately.
\end{proof}
\subsection{Identification of Graph Sums}
We have also have identification of graph sum similar to the identification in Theorem \ref{main}, by replacing the set of the ordinary leaves $L^{o}(\vec\Gamma)$ into open leaves $L^{O}(\vec\Gamma)$. The definition of open leaves follows from the previous subsection.

For $A$-model graph sum, we assign the weight:
\begin{equation}
    w^{O}(\vec{\Gamma}) = \dprod_{v\in V (\Gamma)} \mathcal{V}^{\beta(v)}_{g(v)}(v)
\dprod_{e\in E(\Gamma)} \mathcal{E}_{k(h_1(e)),k(h_2(e))}^{\beta(v_1(e)),\beta(v_2(e))} (e) \cdot \dprod_{\ell\in L^O(\Gamma)} (\mathcal{L}^O)_{k(\ell)}^{\beta(\ell)}(\ell) \dprod_{\ell\in L^1(\Gamma)} (\mathcal{L}^1)_{k(\ell)}^{\beta(\ell)}(\ell).
\end{equation}
We have the graph sum formula:
\begin{equation}
    \dsum_{g\geq 0}\hbar^{g-1} \dsum_{n\geq 0} H_{g,n} (\bold{t},\widetilde{X}_1,\dots,\widetilde{X}_n)= \dsum_{g\geq 0}\hbar^{g-1} \dsum_{n\geq 0} \dsum_{\vec{\Gamma}\in \mathbf{\Gamma}_{g,n}(\cF(m,r))} \dfrac{w^{O}(\vec{\Gamma})}{|\mathrm{Aut}(\vec{\Gamma})|}.
\end{equation}
For the $B$-model graph sum, we assign the weight:
\begin{equation}
    \tilde{w}^{O}(\vec{\Gamma}) = (-1)^{g(\vec{\Gamma})-1+N} \dprod_{v\in V (\Gamma)} \tilde{\mathcal{V}}^{\alpha(v)}_{g(v)}(v)
\dprod_{e\in E(\Gamma)} \check{B}_{k(e),\ell(e)}^{\alpha(v_1(e)),\alpha(v_2(e))} (e) \dprod_{\ell\in L^O(\Gamma)} (\tilde{\mathcal{L}}^O)_{k(\ell)}^{\alpha(\ell)}(\ell)\dprod_{\ell\in L^1(\Gamma)} \big( \dfrac1{\sqrt{-2}} \big)\check{h}_{k(\ell)}^{\alpha(\ell)}.
\end{equation}
We have the graph sum fomula:
\begin{equation}    
W_{g,n}(\bold{q},X_{1},\dots,X_{n})=\dsum_{\vec{\Gamma}\in \mathbf{\Gamma}_{g,n}(\cF(m,r))} \dfrac{\tilde{w}^{O}(\vec{\Gamma})}{|\mathrm{Aut}(\vec{\Gamma})|}
\end{equation}
for $2g-2+n>0$, and $n>0$.

Combining this result with the identification $(0,1)$, $(0,2)$ case, we have the following theorem.
\begin{thm}
By identifying $\widetilde{X}_{i}$ with $X_{i}$, we have
   \[
W_{g,n}(\bold{q},X_{1},\dots,X_{n})=(-1)^{g-1+N}H_{g,n} (\tau(\bold{q}),\widetilde{X}_1,\dots,\widetilde{X}_n)
\] 
for $n>0$.
\end{thm}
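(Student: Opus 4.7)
The plan is to reduce the claim to a term-by-term matching of the $A$-model and $B$-model graph sums, both of which are indexed by the same set $\mathbf{\Gamma}_{g,n}(\cF(m,r))$ of stable labeled graphs and share the same automorphism groups. For the stable range $2g-2+n>0$, it suffices to show that for every $\vec{\Gamma}$ the leaf-vertex-edge weights satisfy
\[
\tilde{w}^{O}(\vec{\Gamma}) = (-1)^{g-1+n}\, w^{O}(\vec{\Gamma}).
\]
Summing over $\vec{\Gamma}$ with the automorphism weights then produces the claimed identity between $W_{g,n}(\bold{q},X_{1},\dots,X_{n})$ and $(-1)^{g-1+n}H_{g,n}(\tau(\bold{q}),\widetilde{X}_{1},\dots,\widetilde{X}_{n})$ after the identification $\widetilde{X}_{i}=X_{i}$.

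I will then compare the four types of contributions one by one. Three of them are immediate from earlier parts of the paper: the vertex identification $\mathcal{V}^{\beta}_{g}=\tilde{\mathcal{V}}^{\alpha}_{g}$ follows from $\sqrt{\Delta^{\alpha}/2}\,h_{1}^{\alpha}=1$; the edge identification $\mathcal{E}^{\alpha,\beta}_{k,\ell}=\check{B}^{\alpha,\beta}_{k,\ell}$ follows from the relation $R^{\alpha}_{\beta}(z)=f^{\alpha}_{\beta}(-1/z)$; and the dilaton-leaf identification $(\mathcal{L}^{1})^{\beta}_{k}=-\frac{1}{\sqrt{-2}}\check{h}^{\beta}_{k}$ was established inside the proof of Theorem \ref{main}. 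All three rely on the $A$-model / $B$-model $R$-matrix equivalence $R(q)=\tilde{R}(\tilde{q})$ from Proposition \ref{A-B R-matrix equivalence}, which remains valid at the large-radius limit. The only genuinely new input, the matching of open (rather than ordinary) leaves $(\mathcal{L}^{O})^{\beta}_{k}=(\tilde{\mathcal{L}}^{O})^{\beta}_{k}$, is exactly the content of Theorem \ref{thm:open-leaves}.

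For the unstable cases $(g,n)=(0,1)$ and $(g,n)=(0,2)$ which lie outside the stable graph-sum formalism, I would appeal to the two propositions already proved in the preceding subsections: the disk-level identification $H_{0,1}(\tau(\bold{q}),X)=W_{0,1}(\bold{q},X)$, consistent with $(-1)^{0-1+1}=+1$, and the annulus identification $H_{0,2}(\tau(\bold{q}),X_{1},X_{2})=-W_{0,2}(\bold{q},X_{1},X_{2})$ from Theorem \ref{thm:annulus}, consistent with $(-1)^{0-1+2}=-1$. Together with the stable graph-sum comparison these cover all $n>0$.

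The main obstacle is the careful sign bookkeeping. Each open leaf and each dilaton leaf in the $B$-model contributes a factor of $\frac{1}{\sqrt{-2}}$, while the overall prefactor $(-1)^{g(\vec{\Gamma})-1+n}$ is built into $\tilde{w}^{O}$ by definition; on the $A$-side, the $(-z)$ arguments in $R(-z)$ distributed over edges and leaves produce additional signs whose count is governed by the valences of $\vec{\Gamma}$. These must combine, via the identity $g(\vec{\Gamma})-1=|E(\Gamma)|-|V(\Gamma)|+\sum_{v}g(v)$, to reproduce the single overall sign $(-1)^{g-1+n}$. This is essentially the same bookkeeping carried out in Theorem \ref{main}, adapted here by replacing ordinary leaves with open leaves via Theorem \ref{thm:open-leaves}; once this sign check is verified, the theorem follows.
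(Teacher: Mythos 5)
Your proposal is correct and follows essentially the same route as the paper: reduce the stable range to a graph-by-graph comparison of the open-leaf graph sums, reuse the vertex, edge and dilaton-leaf identifications from the proof of Theorem \ref{main} together with the open-leaf matching of Theorem \ref{thm:open-leaves}, and settle the unstable cases $(0,1)$ and $(0,2)$ by the disk and annulus identifications already proved. No substantive difference from the paper's argument.
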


\subsection{Bouchard-Mari\~{n}o conjecture on q$\to$0 limit}
In this subsection, we will further specialize Theorem \ref{main} to the $q\to0$ case. In this case, Theorem \ref{main} relates the invariant $\omega_{g,n}$ of the limit curve to the equivariant descendent theory of $\CC$. After expanding $\xi_{\alpha,0}$ in suitable coordinates, we can relate the corresponding expansion of $\omega_{g,n}$ to the generation function of Hurwitz numbers and therefore reprove the Bouchard-Mari\~{n}o conjecture \cite{BM08} on Hurwitz numbers.

Let $\w_i=0$ for $i\neq r$, $q_i=0$ for all $i\in I_{0}$, and take the large radius limit $q_{*}\to 0$. Then our mirror curve becomes
$$x=Y^{r}-r\w_{r}\log Y.$$
When $\w_{r}=\frac{1}{r}$, this is just the $r$-Lambert curve:
\begin{equation*}
    X=Ye^{-Y^{r}}
\end{equation*}
when setting $X=e^{-x}$.

We notice that in this limit we have $n$ critical points $P_{\alpha}$ of $W_{\TT}(Y)$ which are:
$$P_{\alpha}=\zeta_{r}^{\alpha}(\bold{w}_{r})^{\frac{1}{r}},$$
where $\zeta_{r}=\exp(\frac{2\pi i}{r})$, and $\alpha=0,1,\dots,r-1$.

We note that under the large radius limit, other $m$ critical points turns to zero, which is out of the curve.

Under the identification $\frac{1}{\sqrt{-2}}W^\alpha_k(Y_j)=(\tilde{u}_j)^{\hat\alpha}_k$ in Theorem \ref{main}, when $q_{*}\to 0$, $m$ of the critical points goes to 0. On the A-model side, since $\bold{q}=0$, the $S-$matrix $(S^{\hat{\alpha}}_{\hat{\beta}}(z))$ is diagonal. Therefore, $m$ of $(u_j)^\alpha_k\to 0$ when $q\to 0$ under the identification in Theorem \ref{main}. This means that in the localization graph of the equivariant GW invariants of $\cF(m,r)$, we can only have a constant map to $p_2\in\cF(m,r)$. Since $H|_{p_2}=m\w_m=0$ and $q_{i}=0$, we can not have any primary insertions.
Therefore, in the large radius limit, we get
\begin{eqnarray*}
F^{\cF(m,r),\CC^*}_{g,n}(\bold{u}_1,\cdots,\bold{u}_{n};\bt) &=& \sum_{a_1,\ldots, a_n\in \ZZ_{\geq 0}}
\int_{[\Mbar_{g,n}(\cF(m,r),0)]^\vir}
\prod_{j=1}^{n} \ev_j^*(\sum_{\alpha}(u_j)^{\alpha}_{a_j}\phi_\alpha(0)) \psi_j^{a_j}\\
&=&\sum_{a_1,\ldots, a_n\in \ZZ_{\geq 0}}\sum_{\alpha}
\int_{\Mbar_{g,n}(\mathcal{B}\ZZ_{r})}\prod_{j=1}^{n} (u_j)^\alpha_{a_j} \psi_j^{a_j}\phi_{\alpha}e_{\CC^{*}}^{-1}(\EE),
\end{eqnarray*}
where $\EE$ is the hodge bundle, 
and we also write $\lambda_j=c_j(\bE)$ is the $j$-th Chern class of the Hodge bundle. Then we have
\begin{equation*}
    e_{\CC^{*}}(\EE)=\w_{r}^{1-g-\sum_{\gamma_{i}}\age(\gamma_{i})}\bold{c}_{\frac{1}{\w_{r}}}(\EE)
\end{equation*}
on $\Mbar_{g,\vec\gamma}(\mathcal{B}\ZZ_{r})$.

At the same time, we also have $\Delta^{\alpha}(0)=r^{2}\w_{r}$. So $\frac{(u_j)^\alpha_k}{r\sqrt{\w_{r}}}=(\tilde{u}_j)^{\hat\alpha}_k$. Therefore Theorem \ref{main} specializes to
$$\omega_{g,n}|_{\frac{1}{\sqrt{-2}}W^\alpha_k(Y_j)=
\frac{(u_j)^\alpha_k}{r\sqrt{\w_{r}}}}=(-1)^{g-1+r}\sum_{a_1,\ldots, a_n\in \ZZ_{\geq 0}}\sum_{\alpha}
\frac{1}{r\w_{r}}\int_{\Mbar_{g,n}(\mathcal{B}\ZZ_{r})}\prod_{j=1}^n (u_j)^\alpha_{a_j} \psi_j^{a_j}\Lambda_{g}^{\vee}(-\w_{r}).$$

Now we study the expansion of $\xi_{\alpha,0}$ near the point $Y=0$ in the coordinate $Z=e^{-\frac{x}{r\w_r}}$.
We have
$$
\xi_{\alpha,0}=\frac{1}{\sqrt{-1}}\sqrt{\frac{2}{r^2\w_{r}}}\frac{\zeta_{r}^{\alpha}(\w_{r})^{\frac{1}{r}}}{Y-\zeta_{r}^{\alpha}(\w_{r})^{\frac{1}{r}}}.
$$
Since $Z=Ye^{-\frac{Y^{r}}{r\w_{r}}}$, by taking the differential we have
$$\frac{dZ}{Z}=-\frac{Y^{r}-\w_{r}}{Y\w_{r}}dY.$$
Therefore, $\frac{dZ}{Z}\xi_{\alpha,0}=-\frac{1}{\sqrt{-1}}\sqrt{\frac{2}{r^2\w_{r}}}\sum_{i=0}^{r-1}(\frac{Y}{P_{\alpha}})^{i}\frac{dY}{Y}$.
Let
$$\xi_{\alpha,0}=\sum_{\mu=0}^{\infty}C_\mu Z^\mu.$$
near the point $Y=0$. Then we have
\begin{align*}
C_\mu &=\Res_{Y\to 0}\xi_{\alpha,0}Z^{-\mu}\frac{dZ}{Z}\\
&=-\frac{1}{\sqrt{-1}}\sqrt{\frac{2}{r^2\w_{r}}}\Res_{Y\to 0} (e^{\frac{\mu Y^k}{r\w_{r}}}\sum_{i=0}^{r-1}(\frac{Y}{P_{\alpha}})^{i}\frac{dY}{Y^{\mu+1}})\\
&=-\frac{1}{\sqrt{-1}}\sqrt{\frac{2}{r^2\w_{r}}}\frac{(\frac{\mu}{r\w_{r}})^{[\frac{\mu}{r}]}}{[\frac{\mu}{r}]!}P_{\alpha}^{-r\{\frac{\mu}{r}\}}.
\end{align*}
Therefore
$$
W^\alpha_k=-\frac{1}{\sqrt{-1}}\sqrt{\frac{2}{r^2\w_{r}}}r\w_{r}\sum_{\mu=0}^{\infty}\frac{(\frac{\mu}{r\w_{r}})^{[\frac{\mu}{r}]}}{[\frac{\mu}{r}]!}(\frac{\mu}{r\w_{r}})^{k+1}P_{\alpha}^{-r\{\frac{\mu}{r}\}}Z^{\mu-1}dZ.
$$

On A-model side, let
$$
(u_j)^\alpha_{a_j}=\sum_{\mu_j=0}^{\infty}\frac{(\frac{\mu_j}{r\w_{r}})^{[\frac{\mu_j}{r}]}}{[\frac{\mu_j}{r}]!}
(\frac{\mu_j}{r\w_{r}})^{a_j}P_{\alpha}^{-r\{\frac{\mu}{r}\}}Z^{\mu_j}_j.
$$
Then if we let $C(\mu_{j})=\frac{(\frac{\mu_j}{r\w_{r}})^{[\frac{\mu_j}{r}]}}{[\frac{\mu_j}{r}]!}$, we have:
\begin{equation*}
    \sum_{\alpha}(u_j)^\alpha_{a_j}\phi_{\alpha}(0)=C(\mu_{j})(\frac{\mu_j}{r\w_{r}})^{a_j}X^{r-r\{\frac{\mu_{j}}{r}\}}\w_{r}^{-1}
\end{equation*}
Then we have
\begin{eqnarray*}
&& F^{[\CC/\ZZ_{r}],\CC^*}_{g,n}(\bold{u}_1,\cdots,\bold{u}_n)[Z^{\vec{\mu}}]\\
&=&  
\w_{r}^{-n+\sum_{i=1}^{n}\delta_{\{\frac{\mu_{i}}{r}\},0}}\int_{\Mbar_{g,n}(\mathcal{B}\ZZ_{r})}e_{\CC^{*}}^{-1}(\EE)
\prod_{j=1}^{n}C(\mu_{j})
\frac{1}{1-\frac{\mu_{j}\psi_{j}}{r\w_{r}}}\ev^{*}_{j}(1_{r\{-\frac{\mu_{j}}{r}\}})\\
&=& \w_{r}^{-n+\sum_{i=1}^{n}\delta_{\{\frac{\mu_{i}}{r}\},0}}\int_{\Mbar_{g,-\vec{\mu}}(\mathcal{B}\ZZ_{r})}e_{\CC^{*}}^{-1}(\EE)
\prod_{j=1}^{l}C(\mu_{j})
\frac{1}{1-\frac{\mu_{j}\psi_{j}}{r\w_{r}}}.
\end{eqnarray*}
By the r-ELSV formula \cite{JPT11}
\begin{eqnarray*}
H_{g,\vec\mu}^{(r)} &=& 
r^{1-g+\sum_{i=1}^{n}\{\frac{\mu_{i}}{r}\}}\prod_{j=1}^n \frac{[\frac{\mu_j}{r}]^{\mu_j}}{[\frac{\mu_j}{r}]!}
\int_{\Mbar_{g,-\vec{\mu}}(\mathcal{B}\ZZ_{r})}\frac{\sum_{i=0}^{\infty}(-r)^{i}\lambda_{i}}{\prod_{j=1}^{n}(1-\mu_j\psi_{j})}\\
&=&r^{n-\sum_{i=1}^{l}\delta_{\{\frac{\mu_{i}}{r}\},0}}\prod_{j=1}^n \frac{[\frac{\mu_j}{r}]^{\mu_j\psi_{j}}}{[\frac{\mu_j}{r}]!}
\int_{\Mbar_{g,-\vec{\mu}}(\mathcal{B}\ZZ_{r})}\frac{e_{\CC^{*}}^{-1}(\EE)(r\w_{r})^{3g-3+n+\rank \EE}}{\prod_{j=1}^n(1-\frac{\mu_j\psi_{j}}{r\w_{r}})}.\\
\end{eqnarray*}

So we have: 
$$
F^{[\CC/\ZZ_{r}],\CC^*}_{g,n} = \sum_{\vec\mu\in(\ZZ_{\geq0})^{n}}
H_{g,\vec\mu}^{(r)} Z^{\vec\mu}.
$$
when $\w_{r}=\frac{1}{r}$, which is just the generating function of Hurwitz numbers.

Let $W_{g,n}(Z_1,\cdots,Z_l)=\int^{Z_1}_0\cdots\int^{Z_n}_0\omega_{g,n}(Y_1(Z_{1}),\cdots,Y_l(Z_{l}))$, where the integral is over $Z_{i}$ variables.  Then we have

\begin{cor}[Orbifold Bouchard-Mari\~{n}o conjecture]\cite{BSLM14}
For $n>0$ and $2g-2+n>0$, the invariant $W_{g,n}(Z_1,\cdots,Z_n)$ for the curve $x=Y^{r}-r\w_{r}\log Y$ satisfies
\begin{eqnarray*}
&& W_{g,n}(Z_1,\cdots,Z_n)\\
&=& (-1)^{g-1+n} \w_{r}^{n-\sum_{i=1}^{l}\delta_{\{\frac{\mu_{i}}{r}\},0}}\int_{\Mbar_{g,-\vec{\mu}}(\mathcal{B}\ZZ_{r})}e_{\CC^{*}}^{-1}(\EE)
\prod_{j=1}^{l}C(\mu_{j})
\frac{1}{1-\frac{\mu_{j}\psi_{j}}{r\w_{r}}}\\
&=& (-1)^{g-1+n}\sum_{\vec\mu\in(\ZZ_{\geq0})^{l}}
(r\w_{r})^{\sum_{i=1}^{l}\{\frac{\mu_{i}}{r}\}+\{-\frac{\mu_{i}}{r}\}}H_{g,\mu}^{(r)} Z^{\vec\mu}.
\end{eqnarray*}
In particular, when $\w_{r}=\frac{1}{r}$, the right hand side is the generating function of Hurwitz numbers and the Bouchard-Mari\~{n}o conjecture is recovered.
\end{cor}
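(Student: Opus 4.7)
The plan is to specialize Theorem \ref{main} to the degeneration obtained by setting $\w_i=0$ for $i\neq r$, $q_i=0$ for $i\in I_0$, and $q_*\to 0$, and then translate the resulting identity into a statement about integrals against the Hodge class on $\Mbar_{g,\vec\mu}(\mathcal{B}\ZZ_r)$, which by the $r$-ELSV formula of Johnson--Pandharipande--Tseng gives the generating function of orbifold Hurwitz numbers. First I would analyze the two sides of the limit. On the A-model side, because $q_*\to 0$ the only effective curve classes surviving are constants to the fixed point $p_2\in\cF(m,r)$; since $H|_{p_2}=m\w_m=0$ and all non-trivial $q_i$ vanish, no primary insertion can contribute and $F^{\cF(m,r),\TT}_{g,n}$ reduces to an integral on $\Mbar_{g,n}(\mathcal{B}\ZZ_r)$ against the inverse Euler class of the Hodge bundle $\EE$, using the localization formula on the stacky fixed locus. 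On the B-model side, $m$ of the $m+r$ critical points of $W_\TT$ collapse to $0$ and only the $r$ critical points $P_\alpha=\zeta_r^\alpha\w_r^{1/r}$ remain; correspondingly the $S$-matrix $(S^{\hat\ualpha}_{\hat\ubeta})$ becomes diagonal and the identification of Theorem \ref{main} simplifies to an $\alpha$-by-$\alpha$ matching.

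The second step is an explicit expansion of $\xi_{\alpha,0}$ near $Y=0$ in the coordinate $Z=e^{-x/(r\w_r)}$. Writing $\xi_{\alpha,0}=\sum_{\mu\geq 0}C_\mu Z^\mu$ and using $dZ/Z=-\tfrac{Y^{r}-\w_r}{Y\w_r}\,dY$, a residue calculation gives
\[
C_\mu=-\tfrac{1}{\sqrt{-1}}\sqrt{\tfrac{2}{r^2\w_r}}\,\tfrac{(\mu/(r\w_r))^{[\mu/r]}}{[\mu/r]!}\,P_\alpha^{-r\{\mu/r\}},
\]
from which the expansion of $W^\alpha_k$ in $Z$ follows by applying $-d/dx$ iteratively. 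This makes the left-hand side of Theorem \ref{main} a completely explicit formal series in $Z_1,\dots,Z_n$ after integrating $\omega_{g,n}$ against the $Z_j$.

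The third step matches coefficients. Using $\Delta^\alpha(0)=r^2\w_r$ and the relation $\tfrac{1}{\sqrt{-2}}W^\alpha_k(Y_j)=(\tilde u_j)^{\hat\alpha}_k$, I solve for the $(u_j)^\alpha_{a_j}$ appearing on the A-model side and expand $\sum_\alpha(u_j)^\alpha_{a_j}\phi_\alpha(0)$ as a double sum indexed by $\mu_j\in\ZZ_{\geq 0}$ and $a_j\in\ZZ_{\geq 0}$. Substituting into the localization expression for $F^{[\CC/\ZZ_r],\CC^*}_{g,n}$, extracting the coefficient of $Z^{\vec\mu}$, and using $e_{\CC^*}(\EE)=\w_r^{1-g-\sum\age}\bold{c}_{1/\w_r}(\EE)$ on $\Mbar_{g,\vec\gamma}(\mathcal{B}\ZZ_r)$ puts the right-hand side into exactly the shape appearing in the $r$-ELSV formula of \cite{JPT11}. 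Applying that formula converts the Hodge integral into $H^{(r)}_{g,\vec\mu}$ up to the explicit prefactor $(r\w_r)^{\sum\{\mu_i/r\}+\{-\mu_i/r\}}$ and the overall sign $(-1)^{g-1+n}$ from Theorem \ref{main}. Specializing $\w_r=1/r$ removes the prefactor and recovers the classical orbifold Bouchard--Mari\~no statement.

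The main obstacle is purely bookkeeping rather than conceptual: one must track the powers of $\w_r$ consistently through $\Delta^\alpha(0)$, through the rescaling $(u_j)^\alpha_k/(r\sqrt{\w_r})=(\tilde u_j)^{\hat\alpha}_k$, through the Hodge-bundle rescaling formula, and through the $r$-ELSV normalization, and simultaneously handle the case distinction between $r\mid\mu_j$ and $r\nmid\mu_j$ (which enters both in $C(\mu_j)$ and in the factor $r^{1-\delta_{\{\mu_i/r\},0}}$ from $\phi_\alpha(0)$). Once the dictionary is fixed, each step is a mechanical substitution, but a sign or a power of $\w_r$ misplaced anywhere will break the match with $H^{(r)}_{g,\vec\mu}$, so the delicate point is checking that all such factors combine into the single clean prefactor stated in the corollary.
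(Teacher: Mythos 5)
Your proposal is correct and follows essentially the same route as the paper: specializing Theorem \ref{main} at $q_i=0$, $\w_i=0$ ($i\neq r$), $q_*\to 0$, noting that only the $r$ critical points $P_\alpha=\zeta_r^\alpha\w_r^{1/r}$ survive and the $S$-matrix is diagonal so only constant maps to $\mathcal{B}\ZZ_r$ contribute, expanding $\xi_{\alpha,0}$ in $Z=e^{-x/(r\w_r)}$ with the same residue computation for $C_\mu$, rescaling the Hodge class via $e_{\CC^*}(\EE)=\w_r^{1-g-\sum\age}\bold{c}_{1/\w_r}(\EE)$ with $\Delta^\alpha(0)=r^2\w_r$, and invoking the $r$-ELSV formula of \cite{JPT11} before setting $\w_r=1/r$. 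The bookkeeping of powers of $\w_r$ and the $r\mid\mu_j$ versus $r\nmid\mu_j$ distinction that you flag as the delicate point is exactly what the paper's computation carries out.
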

\section{The non-equivariant limit and the Norbury-Scott conjecture}

In this section, we consider the non-equivariant limit $\w_i=0$, and $q_{i}=0$ for $I\in I_{0}$. Under this limit, we get a generalized version of Norbury-Scott conjecture \cite{NS14} for $\cF(m,r)$.
\subsection{Differentials of $J$-function under $q_{i}\to0$ limit}
We first calculate the first order differentials of non-equivariant $J$-function under $q_{i}\to0$ limit. The only non-classical quantum product is $\bold{1}_{1}*\bold{1}_{-1}=q_{*}$.

In $q_{i}\to0$ limit, the mirror map $\tau(q)$ reduces to $t_{i}=q_{i}$ for $i\in I_{0}$, and $t=\log q_{*}$. Let $\tilde{t}_{i}$ be the coordinate of the flat basis $X^{i}$ for $i=0,1,\dots,m+r-1$. In this case, we have $\tilde{t}_{i}=q_{i}$ for $0\leq i< n$, and $\tilde{t}_{i}=q_{*}^{m+r-i}q_{i-m-r}$ for $n<i<m+r$, and $\tilde{t}_{r}=r\log q_{*}$

Using Non-equivariant Mirror Theorem (\ref{non-equivariant mirror theorem}), we first notice: 
\begin{equation*}
    \big(\frac{z\partial J(z)}{\partial{\tilde{t}}_{0}}\big)_{q_{i}\to0}=e^{\frac{H\log q_{*}}{z}}\dsum_{m|l \ or\ r|l,l\geq0}\frac{\Gamma(\frac{H}{mz}+1-{\{-\frac{l}{m}}\})}{\Gamma(\frac{H}{mz}+\frac{l}{m}+1)}\cdot\frac{\Gamma(\frac{H}{rz}+1-   {\{-\frac{l}{r}}\})}{\Gamma(\frac{H}{rz}+\frac{l}{r}+1)}q_{*}^{l}z^{-\lceil
    \frac{l}{m}+\frac{l}{r}\rceil}\bold{1}_{v(lp_{*})},
\end{equation*}
\begin{eqnarray*}
    \big(\frac{z\partial J(z)}{\partial{\tilde{t}}_{r}}\big)_{q_{i}\to0}&= e^{\frac{H\log q_{*}}{z}}\dsum_{m|l \ or\ r|l,l\geq0}\frac{\Gamma(\frac{H}{mz}+1-{\{-\frac{l}{m}}\})}{\Gamma(\frac{H}{mz}+\frac{l}{m}+1)}\cdot\frac{\Gamma(\frac{H}{rz}+1-   {\{-\frac{l-i}{r}}\})}{r\Gamma(\frac{H}{rz}+\frac{l}{r}+1)}lq_{*}^{l}z^{-\lceil\frac{l}{m}+\frac{l}{r}\rceil}\bold{1}_{v(lp_{*})}\\
    &+\dsum_{m|l,\ r|l,l\geq0}\frac{q_{*}^{l}}{(\frac{l}{m})!(\frac{l}{r})!k}z^{-\frac{l}{m}-\frac{l}{r}-1}H
\end{eqnarray*}
and for $0<i<r$
\begin{equation*}
    \big(\frac{z\partial J(z)}{\partial{\tilde{t}}_{i}}\big)_{q_{i}\to0}=    e^{\frac{H\log q_{*}}{z}}\dsum_{m|l \ or\ r|l-i,l\geq 0}\frac{\Gamma(\frac{H}{mz}+1-{\{-\frac{l}{m}}\})}{\Gamma(\frac{H}{mz}+\frac{l}{m}+1)}\cdot\frac{\Gamma(\frac{H}{rz}+1-   {\{-\frac{l-i}{r}}\})}{\Gamma(\frac{H}{rz}+\frac{l-i}{r}+1)}q_{*}^{l}z^{-\lceil\frac{l}{m}+\frac{l-i}{r}\rceil}\bold{1}_{v(lp_{*}+p_{i})},
\end{equation*}
and for $r<i<m+r$
\begin{equation*}
    \big(\frac{z\partial J(z)}{\partial{\tilde{t}}_{i}}\big)_{q_{i}\to0}=    e^{\frac{H\log q_{*}}{z}}\dsum_{m|l \ or\ r|l-i,l\geq i-r}\frac{\Gamma(\frac{H}{mz}+1-{\{-\frac{l}{m}}\})}{\Gamma(\frac{H}{mz}+\frac{l-m}{m}+1)}\cdot\frac{\Gamma(\frac{H}{rz}+1-   {\{-\frac{l-i}{r}}\})}{\Gamma(\frac{H}{rz}+\frac{l+r-i}{r}+1)}q_{*}^{l-m}z^{-\lceil\frac{l}{m}+\frac{l-i}{r}\rceil}\bold{1}_{v(lp_{*}+p_{i-m-r})},
\end{equation*}
Given $m,r$, we define $D=lcm(m,r)$, and $d=gcd(m,r)$.

Under the flat basis $\{X^{i}\}$ for $i=0,1\dots,m+r-1$. In $q_{i}\to0$ limit, we have the following correspondence:
\begin{align*}
    X^{i}=&1_{i} \ \mathrm{for}\ 0\leq i<r \\
    X^{r}=&\frac{H}{r}  \\
    X^{i}=&q_{*}^{i-r}1_{i-m-r} \ \mathrm{for}\ r< i<m+r \\
\end{align*}
We write $\tilde{\partial}_{i}J:=\big(\frac{z\partial J(z)}{\partial{\tilde{t}}_{i}}\big)$ for short. Rewriting the equations as a power series of $\{X^{i}\}$, we have:
\begin{equation*}    \tilde{\partial}_{0}J(z)_{q_{i}\to0}=\dsum_{m|l\ or \ r|l,l\geq0}\frac{\Gamma(1-{\{-\frac{l}{m}}\})}{\Gamma(\frac{l}{m}+1)}\cdot\frac{\Gamma(1-{\{-\frac{l}{r}}\})}{\Gamma(\frac{l}{r}+1)}(\frac{r}{m})^{\lceil\frac{l}{m}\rceil}(\frac{X^{r}}{z})^{\lceil
    \frac{l}{m}+\frac{l}{r}\rceil}+\dsum_{D|l,l\geq0}\frac{(\frac{r}{m})^{\frac{l}{m}}r(\frac{X^{r}}{z})^{
    \frac{l}{m}+\frac{l}{r}+1}}{(\frac{l}{m})!(\frac{l}{r})!},
\end{equation*}
\begin{equation*}    \tilde{\partial}_{r}J(z)_{q_{i}\to0}=X^{r}(\dsum_{m|l\ or \ r|l,l\geq0}\frac{\Gamma(1-{\{-\frac{l}{m}}\})}{\Gamma(\frac{l}{m}+1)}\cdot\frac{\Gamma(1-{\{-\frac{l-r}{r}}\})}{\Gamma(\frac{l-r}{r}+1)}(\frac{r}{m})^{\lceil\frac{l}{m}\rceil}(\frac{X^{r}}{z})^{\lceil
    \frac{l}{m}+\frac{l}{r}\rceil}+\dsum_{D|l,l\geq0}\frac{(r+\frac{r}{l})(\frac{r}{m})^{\frac{l}{m}}(\frac{X^{r}}{z})^{
    \frac{l}{m}+\frac{l}{r}+1}}{(\frac{l}{m})!(\frac{l-r}{r})!}),
\end{equation*}
and for $0<i<r$
\begin{equation*}
\tilde{\partial}_{i}J(z)_{q_{i}\to0}=X^{i}(\dsum_{m|l \ or\ r|l-i,l\geq 0}\frac{\Gamma(1-{\{-\frac{l}{m}}\})}{\Gamma(\frac{l}{m}+1)}\cdot\frac{\Gamma(1-{\{-\frac{l-i}{r}}\})}{\Gamma(\frac{l-i}{r}+1)}(\frac{r}{m})^{\lceil\frac{l}{m}\rceil}(\frac{X^{r}}{z})^{\lceil\frac{l}{m}+\frac{l-i}{r}\rceil}+\dsum_{m|l,r|l-i,l\geq0}\frac{(\frac{r}{m})^{\frac{l}{m}}r(\frac{X^{r}}{z})^{    \frac{l}{m}+\frac{l-i}{r}+1}}{(\frac{l}{m})!(\frac{l}{r})!}),
\end{equation*}
and for $r<i<m+r$
\begin{equation*}
\tilde{\partial}_{i}J(z)_{q_{i}\to0}=X^{i}(\dsum_{m|l \ or\ r|l-i,l\geq i-r}\frac{\Gamma(1-{\{-\frac{l}{m}}\})}{\Gamma(\frac{l}{m}+1)}\cdot\frac{\Gamma(1-{\{-\frac{l-i}{r}}\})}{\Gamma(\frac{l-i}{r}+1)}(\frac{r}{m})^{\lceil\frac{l}{m}\rceil}(\frac{X^{r}}{z})^{\lceil\frac{l}{m}+\frac{l-i}{r}\rceil}+\dsum_{m|l,r|l-i,l\geq0}\frac{(\frac{r}{m})^{\frac{l}{m}}r(\frac{X^{r}}{z})^{    \frac{l}{m}+\frac{l-i}{r}+1}}{(\frac{l}{m})!(\frac{l}{r})!}),
\end{equation*}
From the calculation above, we find out that $\tilde{\partial}_{i}J$ takes the same form except for $i=r$. And $\tilde{\partial}_{r}J$ varies from the form because $\tilde{t}_{r}$ also appears in the $e^{\frac{\tilde{t}_{r}H}{rz}}$.

\subsection{The Norbury-Scott Conjecture} \label{sec:NS14}
Consider the expansion
of $\xi_{\alpha,0}$ near $Y=0$ in $\tx=x^{-1}= (Y^{r}+q_{*}^{m}Y^{-m})^{-1}$.


Assume that $q_{*} \in (0,\infty)$. We have
\begin{equation*}
P_{\alpha}=(\frac{mq_{*}^{m}}{r})^{\frac{1}{m+r}}\zeta_{m+r}^{\alpha}
\end{equation*}
\begin{equation*}
    \Delta^{\alpha}=((m+r)^{m+r}m^{r}r^{m}q_{*}^{mr})^{\frac{1}{m+r}}\zeta_{m+r}^{-m\alpha}
\end{equation*}

The $l$-th coefficient in the expansion of $\tx =(Y^{r}+q_{*}^mY^{-m})^{-1}$ at $Y=0$ is given by the residue
\begin{align*}
\mathrm{Res}_{Y=0} \tx^{-l-1} \xi_{\alpha,0} d\tx
=&\sqrt{\frac{-2}{\Delta^{\alpha}(0)}}\mathrm{Res}_{Y=0} (Y^{r}+q_{*}^{m}Y^{-m})^{l-1} \sum_{i=-m}^{r-1}r(\frac{Y}{P_{\alpha}})^{i}P_{\alpha}^{r}\frac{dY}{Y} \\
=& \sqrt{\frac{-2}{\Delta^{\alpha}(0)}}\mathrm{Res}_{Y=0} (Y^{r+m}+q_{*}^{m})^{l-1} \sum_{i=0}^{m+r-1}r(\frac{Y}{P_{\alpha}})^{i}P_{\alpha}^{m+r}[Y^{ml}] \\
=& \sqrt{\frac{-2}{\Delta^{\alpha}(0)}}r^{\{\frac{lm}{m+r}\}}m^{1-\{\frac{lm}{m+r}\}}q_{*}^{\frac{lmr}{m+r}}\zeta_{m+r}^{rl\alpha}{l-1 \choose \lfloor \frac{ml}{m+r}\rfloor}
\end{align*}

$$
\xi_{\alpha,0} =  \sqrt{\frac{-2}{\Delta^{\alpha}(0)}}\sum_{l=0}^\infty r^{\{\frac{lm}{m+r}\}}m^{1-\{\frac{lm}{m+r}\}}q_{*}^{\frac{lmr}{m+r}}{l-1 \choose \lfloor \frac{ml}{m+r}\rfloor}\zeta_{m+r}^{rl\alpha}x^{-l}
$$

This shows:
\begin{equation*}
    \tilde{u}^{\hat\alpha}_{k}=\sqrt{\frac{1}{\Delta^{\alpha}(0)}}\sum_{l=0}^\infty r^{\{\frac{lm}{m+r}\}}m^{1-\{\frac{lm}{m+r}\}}q_{*}^{\frac{lmr}{m+r}}\frac{(l+k)!}{\lfloor \frac{ml}{m+r}\rfloor!(l-1-\lfloor (\frac{ml}{m+r}\rfloor)!}\zeta_{m+r}^{rl\alpha}x^{-l-k-1}dx
\end{equation*}
\begin{equation*}    \tilde{\bu}^{\hat\alpha}=\sqrt{\frac{1}{\Delta^{\alpha}(0)}}\sum_{l,k=0}^\infty r^{\{\frac{lm}{m+r}\}}m^{1-\{\frac{lm}{m+r}\}}q_{*}^{\frac{lmr}{m+r}}\frac{(l+k)!}{\lfloor (\frac{ml}{m+r}\rfloor)!(l-1-\lfloor \frac{ml}{m+r}\rfloor!}\zeta_{m+r}^{rl\alpha}(\frac{z}{x})^{k} x^{-l-1}dx
\end{equation*}
\begin{equation*}
    \tilde{\bu}^{\tilde{i}}=\dsum_{rl-i|m+r,k=0}^\infty rq_{*}^{m[\frac{lr}{m+r}]}\frac{(l+k)!}{\lfloor (\frac{ml}{m+r}\rfloor)!(l-1-\lfloor (\frac{ml}{m+r}\rfloor)!}(\frac{z}{x})^{k}x^{-l-1}dx
\end{equation*}
This implies 
\begin{equation*}
\dsum_{\alpha}\bu^{\alpha}\phi_{\alpha}=(\phi_{\alpha})\bold{\Delta}(\bold{S}^{\ualpha}_{\ubeta})^{T}\bold{\Delta}^{\frac{1}{2}}\tilde{\bu}^{\hat\beta}=\dsum_{\beta}-z\frac{\partial J(-z)}{\partial u^{\beta}}\tilde{\bu}^{\beta}=\dsum_{i=0}^{m+r-1}\tilde{\partial}_{i}J(-z)\tilde{\bu}^{\tilde{i}}
\end{equation*}
where $\Delta=\mathrm{diag}(\Delta^{\alpha}(0))$.
Using Theorem \ref{main}, taking $z=1$ we have:
\begin{equation*} 
    \omega_{g,n}=(-1)^{g-1+n}\llangle \dsum_{i=0}^{m+r-1}\tilde{\partial}_{i}J(-1)\tilde{\bu}^{\tilde{i}}(1,x_{1}),\cdots ,\dsum_{i=0}^{m+r-1}\tilde{\partial}_{i}J(-1)\tilde{\bu}^{\tilde{i}}(1,x_{n})\rrangle_{g,n}^{\cF(m,r)}
\end{equation*}
\begin{remark}
Notice that:
    \begin{equation*}    \tilde{\bu}^{\tilde{i}}X^{i}=r\dsum_{rl-i|m+r,k=0}^\infty \frac{(l+k)!(\frac{r}{m})^{\lfloor \frac{ml}{m+r}\rfloor}}{(\lfloor \frac{ml}{m+r}\rfloor)!(l-1-\lfloor (\frac{ml}{m+r}\rfloor)!}(\frac{z}{x})^{k}X^{lr}x^{-l-1}dx
\end{equation*}
when $2g-2+n>0$ and $n>0$.
This means $\dsum_{i=0}^{m+r-1}\tilde{\partial}_{i}J(-1)\tilde{\bu}^{\tilde{i}}(1,x_{n})$ is actually a power series of $X^{r}=\frac{H}{r}$, which is also in accordance with Norbury-Scott conjecture when $m=r=1$.
\end{remark}


\color{black}


\begin{thebibliography}{33}

\bibitem{AGV02} D. Abramovich, T. Graber, and A. Vistoli,
{\em Algebraic orbifold quantum products, Orbifolds in mathematics and physics. (Madison, WI, 2001)}, 1--24,
Contemp. Math., {\bf 310}, Amer. Math. Soc., Providence, RI, 2002.

\bibitem{AGV08} D. Abramovich, T. Graber, and A. Vistoli,
{\em Gromov-Witten theory of Deligne-Mumford stacks}, 
Amer. J. Math. {\bf 130} (2008), no. 5, 1337--1398.

\bibitem{Adem07} A. Adem, J. Leida, and Y. Ruan,
{\it Orbifolds and Stringy Topology}, 
Cambridge University Press, Cambridge, 2007.

\bibitem{Behrend97} K. Behrend and B. Fantechi,
{\em The intrinsic normal cone}, 
Inventiones Mathematicae {\bf 128}(1):45--88, 1997.

\bibitem{BCS05} L. Borisov, L. Chen, and G.  Smith,
{\em The orbifold Chow ring of toric Deligne-Mumford stacks},
J. Amer. Math. Soc. {\bf  18} (2005), no. 1, 193--215.
\bibitem{BM08} V. Bouchard, M. Mariño, {\em Hurwitz numbers, matrix models and enumerative geometry}, Proc. Sympos. Pure Math.,
Amer. Math. Soc., {\bf 78}(2008), 263–283.
\bibitem{BSLM14} V. Bouchard, D. Hernandez Serrano, X. Liu, and M. Mulase, 
{\it Mirror symmetry for orbifold Hurwitz Numbers,} 
J. Differential Geometry {\bf 98} (2014) 375--423.

\bibitem{CR02} W. Chen and Y. Ruan, 
{\it Orbifold Gromov-Witten theory},
Contemp. Math., {\bf 310} American Mathematical Society, Providence, RI, 2002, 25–-85.

\bibitem{Chen04} W. Chen and Y. Ruan, 
{\it A new cohomology theory of orbifold}, 
Communications in Mathematical Physics {\bf 248}(1):1--31, 2004.





\bibitem{CCIT15} T. Coates, A. Corti, H. Iritani, and H. Tseng,
{\it A mirror theorem for toric stacks}, 
Compositio Mathematica {\bf  151}(10): 1878--1912, 2015.

\bibitem{CCLT09} T. Coates, A. Corti, Y. Lee, and H. Tseng,
{\it The quantum orbifold cohomology of weighted projective space},
Acta Mathematica {\bf 202}:139--193, 2009.


\bibitem{CIJ} T. Coates, I. Hiroshi, and Y. Jiang,
{\it The Crepant Transformation Conjecture for toric complete intersections,} 
Advances in Mathematics {\bf 329} (2018) 1002–-1087.

\bibitem{DOSS} P. Dunin-Barkowski, N. Orantin, S. Shadrin, and L. Spitz, {\it Identification of the Givental formula with the spectral curve topological recursion procedure},
Comm. Math. Phys. {\bf 328} (2): 669--700, 2014.

\bibitem{Eynard} B. Eynard and N. Orantin, 
{\it Invariants of algebraic curves and topological expansion},
Commun. Number Theory Phys. {\bf 1} (2):347-452, 2007.

\bibitem{Fang20} B. Fang, 
{\it Central charges of T-dual branes for toric varieties},
Trans. Amer. Math. Soc. {\bf 373} (2020), no. 6, 3829--3851.
\bibitem{FLZ16} B. Fang, C-C. M. Liu, Z. Zong,{\it All genus open-closed mirror symmetry for affine toric Calabi-Yau 3-orbifolds}, Proceedings of Symposia in Pure Mathematics (V. Bouchard, C. Doran, S. MendezDiez, C Quigley eds.), Amer Mathematical Soc., 2016.
\bibitem{FLT22}B. Fang, Liu, C-C. M. Liu, H.-H. Tseng, {\it Open-closed Gromov-Witten invariants of 3-dimensional Calabi-Yau smooth toric DM stacks}, Forum of Mathematics, Sigma, Volume 10
\bibitem{FLZ17} B. Fang, C-C. M. Liu, and Z. Zong, 
{\it The Eynard-Orantin recursion and equivariant mirror symmetry for the projective line}, Geometry \& Topology {\bf 24}:2049--2092, 2017.
\bibitem{FLZ20} B. Fang, C.-C. M. Liu, Z. Zong,
``On the remodeling conjecture for toric
Calabi-Yau 3-orbifolds,''
J. Amer. Math. Soc. 33.1 (2020), pp. 135-222.
\bibitem{Given} A.B. Givental, 
{\it Gromov-Witten invariants and quantization of quadratic Hamiltonians},
Mosc. Math. J. {\bf 1}(4): 551-568, 2001.

\bibitem{Given1} A.B. Givental. 
{\it Semisimple Frobenius structures at higher genus},   
Internat. Math. Res. Notices {\bf 23}:1265--1286, 2001.


\bibitem{Iritani} H. Iritani, 
{\it An integral structure in quantum cohomology and mirror symmetry for toric orbifolds},
Advances in Mathematics {\bf 222}:1016--1079, 2009.

\bibitem{Kacz11} J. Kaczorowski and A. Perelli, 
{\em A uniform version of Stirling's formula}, 
Functiones et Approximatio {\bf 45}: 89--96, 2011.

\bibitem{Jiang08} Y. Jiang,
{\it The orbifold cohomology ring of simplicial toric stack bundles}, 
Illinois J. Math. {\bf 52} (2008), no. 2, 493--514.


\bibitem{JPT11} P. Johnson, R. Pandharipande, and H.-H. Tseng, 
{\it Abelian Hurwitz–Hodge Integrals},
Michigan Math. J. {\bf 60} (2011), 171--198. 

\bibitem{Liu13} C. Liu, 
{\it Localization in Gromov-Witten theory and orbifold Gromov-Witten theory},  
Handbook of Moduli (Vol. II), 353--425, Adv. Lect. Math. (ALM) 25, International Press and Higher Education Press, Beijing, 2013.

\bibitem{MT08} T. E. Milanov and H.-H. Tseng,
{\it The spaces of Laurent polynomials, Gromov-Witten theory of $\PP^1$-orbifolds, and integrable hierarchies}, 
J. reine angew. Math {\bf 622} (2008), 189--235.

\bibitem{MT11} T. E. Milanov and H.-H. Tseng, 
{\it Equivariant orbifold structures on the projective line
and integrable hierarchies}, 
Advances in Mathematics {\bf 226} (2011) 641--672.

\bibitem{NS14} P. Norbury, N. Scott,
{\it Gromov-Witten invariants of $\PP^1$ and Eynard-Orantin invariants},
Geom. Topol. {\bf 18} (2014), no. 4, 1865--1910.

\bibitem{Tang} D. Tang, 
{\it Equivariant mirror symmetry for the weighted projective line},  arXiv:1712.04836 

\bibitem{Tseng10} H. Tseng, 
{\it Orbifold quantum Riemann-Roch, Lefschetz and Serre},
Geometry \& Topology {\bf 14} (1):1--81, 2010.
%

\bibitem{Zong} Z. Zong, 
{\it Equivariant Gromov-Witten theory of GKM orbifolds},
arXiv:1604.07270.



\end{thebibliography}
\end{document}